\documentclass[12pt]{article}
\usepackage{amssymb,amsthm,latexsym,amsmath,amscd}

\numberwithin{equation}{section}
\newtheorem{theorem}{Theorem}
\newtheorem{lemma}{Lemma}
\newtheorem{proposition}{Proposition}
\newtheorem{corollary}{Corollary}
\newtheorem*{theorema}{Main Theorem}

\newtheorem*{conjecture}{Conjecture}

\theoremstyle{definition}

\theoremstyle{definition}
\newtheorem{remark}{Remark}

\theoremstyle{definition}
\newtheorem{definition}{Definition}

\theoremstyle{definition}

\theoremstyle{definition}
\newtheorem{notation}{Notation}

\theoremstyle{definition}

\theoremstyle{definition}

\theoremstyle{definition}

\theoremstyle{definition}

\theoremstyle{definition}

\newcommand{\Real}{\operatorname{Re}}

\newcommand{\Supp}{\operatorname{Supp}}

\newcommand{\dee}{\partial}
\newcommand{\dba}{\bar{\partial}}
\newcommand{\ddba}{\partial \bar{\partial}}

\newcommand{\pa}[2]{\frac{\partial{#1}   } {\partial{#2}            } }

\begin{document}
\title{Sharp Estimates for the $\bar{\partial}$-Neumann Problem \\on Regular Coordinate Domains}
\author{David  W. Catlin\\
Department of Mathematics\\
Purdue University \\ catlin@math.purdue.edu
\and Jae-Seong Cho\\
Department of Mathematics\\
Purdue University \\
cho1@math.purdue.edu}

\date{\today}
\maketitle

\begin{abstract}
This paper  treats subelliptic estimates for the $\bar{\partial}$-Neumann problem on a class of domains known as
regular coordinate domains. Our main result is that the largest subelliptic gain for a regular coordinate domain
is bounded below by a purely algebraic number, the inverse of twice the multiplicity of the ideal associated to a
given boundary point.
\end{abstract}


\section{Introduction}

In this paper, we investigate subelliptic estimates for the $\bar{\partial}$-Neumann problem \cite{FK72} on
 a certain class of smoothly bounded pseudoconvex
domains in $\mathbf{C}^{n+1}.$ Recall that such an estimate holds in a neighborhood $U$ of a given point $z_0$ in $\bar
\Omega$ if
\begin{equation} \label{E:SubEllipEst}
|||\phi|||^2_{\epsilon}\leq
 C\left(||\bar{\partial}\phi||^2+||\bar{\partial}^*\phi||^2+||\phi||^2\right),  \quad \phi \in \mathcal{D}^{0,1}(U).
 \end{equation}
 According to the results of Catlin in \cite{Ca83} and \cite{Ca87}, the above estimate holds if and only if
 the D'Angelo type
 \cite{DA82} of
 $z_0$ is finite, i.e., $T( b\Omega,z_0)<\infty$, where
   $T(b\Omega,z_0)$ measures the maximal order of contact at $z_0$ of any
one-dimensional variety $V$ with the boundary. In  fact, in \cite{Ca83} it is shown that $1/T(b\Omega,z_0)$
is an upper bound for $\epsilon.$ The question of finding sharp lower bounds for $\epsilon$ seems
to be more difficult. In \cite{Ko79} Kohn introduced the method of subelliptic multipliers and showed
in the case of real-analytic boundaries that \eqref{E:SubEllipEst}
 holds for some positive $\epsilon$ when $T(b\Omega,z_0)$ is
 finite. Building on the work of Kohn in \cite{Ko79}, Catlin proved in \cite{Ca87} that \eqref{E:SubEllipEst}
 holds for smooth boundaries
when $\epsilon = T(b\Omega,z_0)^{-n^2A},$ where $A=T(b\Omega,z_0)^{n^2}.$

 In this paper we show that we can obtain much better
$\epsilon$ for a class of domains that are defined by a sum of squares  of holomorphic functions. Specifically,
suppose that $g_1(z),\ldots,g_N(z)$ are holomorphic functions that are defined in a neighborhood of the origin of
the origin in $\mathbb C^n$. We define a domain $\Omega_g \in \mathbb{C}^{n+1}$ by $\Omega_g=\{(z,z_{n+1}): \Real
z_{n+1}+|g_1(z)|^2+\dots +|g_N(z)|^2<0\}.$  Let $\mathcal{O}_n$ denote the ring of germs of holomorphic functions
about the origin in $\mathbb C^n$ and let $I$ denote the ideal in $\mathcal{O}_n $ generated by the germs of
$g_1,\ldots, g_N$. Recall that the multiplicity of $I$ is defined by $m(I)=\dim_{\mathbb C_n} \mathcal O_n/I.$
Considering $T(b\Omega_{g},0) \leq 2m(I)$ in \cite{DA82}, D'Angelo states the following
\begin{conjecture}[D'Angelo \cite{DA93}]
The inequality \eqref{E:SubEllipEst} holds near the origin for the domain $\Omega_g$ with
$\epsilon=\frac{1}{2m(I)}$.
\end{conjecture}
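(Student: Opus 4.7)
The plan is to verify the quantitative sufficient condition of Catlin \cite{Ca87} for subelliptic estimates with $\epsilon = 1/(2m(I))$. That criterion requires a family of uniformly bounded plurisubharmonic functions $\{\lambda_\delta\}_{0<\delta<1}$ defined near the origin in $\Omega_g$, such that the complex Hessian of $\lambda_\delta$ dominates $\delta^{-1/m(I)}$ times the Euclidean metric on the boundary strip $\{-\delta<\rho<0\}$, where $\rho(z,z_{n+1})=\Real z_{n+1}+\sum_i|g_i(z)|^2$ is the defining function. All of the work is in building this family; the passage from it to \eqref{E:SubEllipEst} is then automatic.

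The first step is algebraic reduction to a normal form. Since $\dim_{\mathbb C}\mathcal O_n/I=m(I)<\infty$, the ideal $I$ is $\fm$-primary, and a generic Noether normalization inside $I$ produces $n$ holomorphic functions $f_1,\dots,f_n\in I$ (realized as generic linear combinations of $g_1,\dots,g_N$) whose common zero set is $\{0\}$ and whose associated germ of a finite map $f:(\mathbb C^n,0)\to(\mathbb C^n,0)$ has degree at most $m(I)$. I would then introduce the \emph{regular coordinates} suggested by the title: a distinguished coordinate system on the source $\mathbb C^n$ in which the tuple $(f_1,\dots,f_n)$ acquires a weighted order profile $(\mu_1,\dots,\mu_n)$ with $\mu_1\mu_2\cdots\mu_n=m(I)$. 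Constructing such coordinates is analogous to a Weierstrass preparation adapted simultaneously to all components of $f$.

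The second step is a quantitative Łojasiewicz inequality in these coordinates: using the finiteness and degree bound for $f$, derive a pointwise estimate of the form $\sum_j|f_j(z)|^2\gtrsim \|z\|_{\mu}^{2m(I)}$, where $\|\cdot\|_\mu$ is the anisotropic norm built from the weight profile. The third step is the construction of $\lambda_\delta$. A natural ansatz is
\[
\lambda_\delta(z,z_{n+1})=\chi_1\!\left(\delta^{-1/m(I)}\|z\|_\mu\right)+\chi_2\!\left(\delta^{-1}\sum_j|f_j(z)|^2\right),
\]
for suitably chosen convex $\chi_1,\chi_2$, possibly augmented by an $O(\delta^{-1})|z_{n+1}|$ contribution to control the transverse direction. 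The roles of the two summands are complementary: $\chi_2\circ\sum|f_j|^2$ automatically has nonnegative complex Hessian but is degenerate in the directions where $Df$ loses rank; the anisotropic $\chi_1$ term fills in precisely those directions, and the Łojasiewicz inequality of Step 2 forces the two contributions to overlap on the $\delta$-tube so that the sum has Hessian at least $c\delta^{-1/m(I)}$ in every complex tangent direction.

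The principal obstacle will be Step 3, and specifically the reconciliation of three competing requirements: plurisubharmonicity (a positivity condition on every complex two-plane), the sharp Hessian lower bound on the tube (a quantitative non-degeneracy condition), and the uniform bound $|\lambda_\delta|\le 1$ (a size condition that forbids the naive choice $\chi_1(t)=t^2$). The weight profile $(\mu_j)$ must be calibrated so that the anisotropic scale $\|z\|_\mu\sim\delta^{1/m(I)}$ exactly matches the scale at which $\sum|f_j|^2\sim\delta$; only the specific multiplicity $m(I)$ produces simultaneous saturation of all weighted directions, which is the analytic reason the conjectured exponent $1/(2m(I))$ is the correct one. Verifying this calibration rigorously --- equivalently, proving that the regular coordinates realize the Samuel multiplicity as a product of weighted orders --- is the crux of the argument; the subelliptic estimate then follows by feeding $\{\lambda_\delta\}$ into the machinery of \cite{Ca87}.
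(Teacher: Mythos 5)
Your proposal diverges from the paper in two places, one of scope and one of method, and each contains a genuine gap.

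\textbf{Scope.} The paper does \emph{not} prove D'Angelo's conjecture in full generality; its Main Theorem establishes the conjecture only for \emph{regular coordinate domains}, i.e.\ when $I$ is generated by a triangular system $f_1(z_1),\,f_2(z_1,z_2),\dots,f_n(z_1,\dots,z_n)$ with $\partial^{m_s} f_s/\partial z_s^{m_s}(0)\ne 0$. Your first step asserts that a generic Noether normalization inside $I$ followed by a change of coordinates produces such a triangular system with $m_1\cdots m_n=m(I)$. This is false in general. For example $I=(z_1^2,z_1z_2,z_2^2)\subset\mathcal O_2$ has $m(I)=3$ but cannot be generated by two elements (it is not a complete intersection), so no pair $(f_1,f_2)$ with $(f_1,f_2)=I$ exists, let alone a triangular one. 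Even when $I$ is a complete intersection, generic linear combinations of the $g_i$ give a regular sequence but do not generally give a triangular system; the regular coordinate domain hypothesis is a genuine restriction, not a normalization every finite ideal admits. So the reduction step cannot stand, and what you would prove, if the rest worked, is only the special case the paper actually treats.

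\textbf{Method, within the regular coordinate case.} Your ansatz uses a single anisotropic norm $\|\cdot\|_\mu$ centered at the origin, together with a Łojasiewicz inequality $\sum_j|f_j(z)|^2\gtrsim\|z\|_\mu^{2m(I)}$. This misses what the paper identifies as the central difficulty. Catlin's criterion requires the Hessian of $\lambda_\delta$ to be $\gtrsim\delta^{-2\epsilon}|L|^2$ at \emph{every} point of the strip $\{-\delta<r\le 0\}$ in a fixed neighborhood of the origin, not just at points where $\|z\|_\mu\sim\delta^{1/m(I)}$. At a nearby point $p$ off the coordinate axes the local multiplicities of the $f_s$ drop, and the correct local scales are not those of the weighted norm at $0$: for $f_1=z_1^2$, $f_2=z_2^2-z_1$, at $p=(c,0)$ with $c^2\gg\delta$ the correct first scale is $\sim\delta^{1/2}/|c|$ rather than $\delta^{1/4}$. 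To handle this the paper constructs \emph{point-dependent} inductive scales $\tau_s(p,\mu,\delta)$ (Section~\ref{S:regularcoordinate}), then has to confront the fact that these scales are \emph{not stable} as $p$ varies; the whole apparatus of dominant and mixed types in Sections~\ref{S:types}--\ref{S:uniform}, the finiteness propositions \ref{P:FinitePure} and \ref{P:FiniteMixed}, and the stratified covering of Section~\ref{S:instrips} exist precisely to recover a usable stability statement. Your single fixed-profile ansatz cannot produce the required lower Hessian bound at generic nearby boundary points, and there is no way to patch it without rediscovering something like the paper's type-stratified, point-by-point box construction. The Łojasiewicz inequality is also the wrong tool: what the paper needs is not a lower bound on $\sum|f_j|^2$ at the origin but quantitative control of the derivatives $\partial f_s/\partial z_i$ on boxes $R_\delta(p)$ (Proposition~\ref{P:dominantpartial}), used to show the cutoff-corrected sum $G^p_\delta$ has the claimed Hessian (Theorem~\ref{T:PluriCn}).

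In short, the high-level frame (verify Catlin's sufficient condition by building $\{\lambda_\delta\}$) matches the paper, and the intuition that the weighted scale $\delta^{1/m(I)}$ is the relevant one is correct; but the reduction to regular coordinates is invalid in general, and the proposed construction of $\lambda_\delta$ omits the point-dependence and stability issues that constitute the actual mathematical content of the paper's proof.
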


 Siu   \cite{Si07} has shown that for domains of the form $\Omega_g$ one can find a suitable modification of
Kohn's algorithm that also leads to an effective value of $\epsilon$ in terms of the dimension $n$ and the type
$T(b\Omega,0).$

In \cite{DA93} D'Angelo  also introduced the class of regular coordinate domains which are defined as follows:
Let $f_s(z)=f_s(z_1,\dots,z_s)$, $s=1,\dots ,n,$ be holomorphic functions of the first $s$ variables that we can
view as being defined in a neighborhood of the origin in $\mathbf{C}^{n+1}.$ We then define
\begin{equation}\label{E:Regular}
\Omega=\{z\in\mathbf{C}^{n+1} : r(z)=\Real z_{n+1} + \sum_{s=1}^n |f_s(z)|^2 <0  \}.
\end{equation}
The domain $\Omega$ is said to be a \emph{regular coordinate domain} if for each $s=1,\ldots,n,$ there exists a
smallest positive integer $m_s$  such that $\frac{\partial^{m_s} f_s}{\partial z_s^{m_s}}(0)\ne 0.$  It is shown
in  \cite{DA93} that if $I=(f_1,\ldots,f_m),$ then $m(I)=m_1\cdots m_n$  and $T(b\Omega,0) \leq 2 m(I)$. In this
paper we will prove D'Angelo's conjecture for regular coordinate domains.

\begin{theorema}
Let $\Omega\subset \mathbb C^{n+1}$ be a regular coordinate domain defined by functions $f_1,\ldots,f_n$
as above. Then \eqref{E:SubEllipEst} holds near the origin with $\epsilon=\frac{1}{2m_1\cdots m_n}.$
\end{theorema}

We now give an example for which the above value of $\epsilon$ is sharp. Using $f_1(z)=z_1^{m_1}$ and
$f_k(z)=z_k^{m_k}-z_{k-1}$, $k=2,\dots,n$, we  define $\Omega$ as in \eqref{E:Regular}. If we define a curve by
\[
\gamma(\zeta)=(\zeta^{m_2\dots m_n}, \zeta^{m_3\dots m_n}, \dots, \zeta^{m_n}, \zeta, 0).
\]
it is easy to verify that $r(\gamma(\zeta))=|\zeta|^{2m_1\dots m_n}$, and therefore $T(b\Omega,0) \geq 2m_1\cdots
m_n.$ In combination with above-mentioned result in \cite{Ca83}, it follows that $\epsilon \leq
\frac{1}{2m_1\cdots m_n}$. Thus, the value of $\epsilon$ found in the Main Theorem is sharp for this domain.

 The proof of the Main Theorem follows the approach used in \cite{Ca87}  in which it is shown that
in order to prove a subelliptic estimate of order $\epsilon$ near a given boundary point, it suffices to
construct a family of bounded $C^2$ plurisubharmonic functions $\lambda_{\delta}$ with the property that the
Hessian satisfies
\[
H(\lambda_{\delta})(L,\bar{L})\geq c\delta^{-2\epsilon}|L|^2 \quad \mbox{at all points of $U\cap S_{\delta}$},
\]
where $S_{\delta}=\{z\in \bar{\Omega};-\delta<r(z)\leq 0\}$. As in \cite{Ca89} \cite{M92} \cite{NSW}, it suffices
to describe a family of boxes $B(z,\delta)$ for each boundary point $z \in b\Omega \cap U$ and each small $\delta
>0$. Furthermore, one must also construct a bounded plurisubharmonic function $g_{z,\delta}$ compactly supported
in $B(z,\delta)\cap \bar \Omega$ such that its Hessian
 in a slightly smaller box
satisfies lower bounds that correspond to the size of the box. Finally a covering is used to patch the functions
together. In this patching process it is important for the family of boxes to be stable, in the sense that if
$w \in B(z,\delta')\cap b\Omega,$ and if $\delta'$ is comparable in size to $\delta,$ then $B(w,\delta)$ should be
comparable in
size to $B(z,\delta)$..

In the case of regular coordinate domains it is also possible to construct both a family of boxes $B(z,\delta)$
and a family of bounded plurisubharmonic functions $g_{z,\delta}$ supported in $B(z,\delta)\cap\bar \Omega$ with
suitably large Hessian.  The main difficulty comes from the fact that the stability property no longer holds.
Instead we show that there is a set of integer invariants $\mathcal{T}(z,\delta)$ that assume at most a finite
number of values such that if $z$ and $w$ also satisfy, $\mathcal{T}(z,\delta)=\mathcal{T}(w,\delta'),$ then  the
same stability property holds.  Using this additional property a modified version of the covering argument can be
carried out.

\medskip
\noindent \emph{Acknowledgement} The authors would like to express our thanks to John P. D'Angelo. He has
constantly encouraged our work and kindly showed us his recent lecture note about subelliptic multipliers.


\section{Regular Coordinate Domains and their Approximate Systems}\label{S:regularcoordinate}
Let $\Omega$ be a bounded pseudoconvex domain in $\mathbb C^{n+1}$ whose boundary defining function near the
origin is given by 
\begin{equation}\label{E:sumofsquares}
r(z,z_{n+1})=\Real z_{n+1} + \sum_{s=1}^n |f_s(z)|^2,
\end{equation}
where each $f_s$ is a holomorphic function defined near the origin in $\mathbb C^n$ with $f_s(0)=0$. If each
$f_s$ depends only on the first $s$ variables, $z_1,\dots,z_s$, and if for each $s$ there exists a positive
integer $j$ so that 
\begin{equation}\label{E:multiplicity}
\frac{\partial^{j} f_s}{\partial z_s^{j}}(0)\ne 0,
\end{equation}
then, following the terminology in \cite{DA93} and \cite{DA08}, we say that $\Omega$ is a \emph{regular
coordinate domain at the origin} and that $f_1, \dots, f_n$ form a \emph{triangular system at the origin} of
$\Omega$. Let us denote 
\begin{equation}\label{E:amultiplicity}
m_s=\min\left\{j:\frac{\partial^{j} f_s}{\partial z_s^{j}}(0)\ne 0\right\}.
\end{equation}
The power series of $f_s$ at the origin is of the form 
\begin{equation}
f_s(z)= \sum_{j=1}^{\infty} b_{s,j}z_s^j+ \sum_{\alpha \in \mathcal M_s}
c_{s, \alpha} z^{\alpha}, \quad b_{s,m_s}\ne 0
\end{equation}
where  $\mathcal M_s$ denotes the set of multi-indices $\alpha=(\alpha_1,\dots,\alpha_n)$ such that
$\sum_{i=1}^{s-1}\alpha_i \geq 1$ and $\alpha_i=0$ for $i>s$, and $z^{\alpha}=z_1^{\alpha} \dots z_n^{\alpha}$.
\begin{remark}
Let $\mathcal O_n$ denote the ring of germs of holomorphic functions at the origin in $\mathbb C^n$ and let $I$
be the ideal generated by germs of $f_1,\dots,f_n$ in $\mathcal O_n$, described as above.  Set
$m(I)=\dim_{\mathbb C} \mathcal O_n/I$. Then $m(I)=m_1\dots m_n$.
\end{remark}


Since the existence of $\{\lambda_{\delta}\}$ described in \cite{Ca87} is invariant under
a local biholomorphism, without loss of generality we may modify each $f_s$ as follows. By scaling the variables
$z_1,\dots,z_n$, we may assume that the radius of convergence of each $f_s$  at the origin is greater than $2$.
Hence, there exists a neighborhood $U'$ of the origin in $\mathbb C^n$ such that the radius convergence of each
$f_s$ at $p\in U'$ is greater than $\frac{3}{2}$. After multiplying \eqref{E:sumofsquares} by a suitable
constant, we may also assume, by \eqref{E:multiplicity}, that $|b_{s,m_s}| \geq 2, \quad s=1,\dots, n$.

Let $p \in U$ and let us use $u_i=z_i-p_i$ as a coordinate system centered at $p$. We will consider  the power
series of $f_s$ at each $p \in U'$, 
\begin{align}
f^p_s(u) &=f_s(u+p)-f(p) \notag \\
&=\sum_{j=1}^{\infty} b_{s,j}(p)u_s^j + \sum_{\alpha \in \mathcal M_s} c_{s,\alpha}(p) u^{\alpha}, \quad
s=1,\dots,n.\label{E:fppower}
\end{align}
Note that $f^p_1,\dots, f^p_s$, form a triangular system at each $p$ and that
\begin{equation}
b_{s,j}(p)=\frac{1}{j!}\frac{\partial^{j}f_s}{\partial z_s^j}(p), \quad c_{s,\alpha}(p)=\frac{1}{\alpha !
}\frac{\partial^{|\alpha|}f_s}{\partial z^{\alpha}}(p)
\end{equation}
where $\alpha ! =\alpha_1 ! \cdots \alpha_s !$ and $|\alpha|=\alpha_1+\cdots+\alpha_s$.

Since we assumed that the radius of convergence of $f_s^p$, $p\in U'$, is greater than $\frac{3}{2}$, it follows
from Cauchy's estimate that for each $s$
\begin{equation}\label{E:bsjtozero}
\lim_{j\to \infty} |b_{s,j}(p)|=0,  \quad \lim_{|\alpha| \to \infty} |c_{s,\alpha}(p)| =0,  \quad \mbox{uniformly
in $p \in U'$}.
\end{equation}
Also,  $|b_{s,m_s}|\geq 2$ implies that there exists a bounded neighborhood $U \subseteq U'$ of the origin so
that
\begin{equation}\label{E:bjmjsize}
|b_{s,m_s}(p)|\geq 1 \quad  \mbox{for any $p \in U$}.
\end{equation}
It follows from \eqref{E:bsjtozero} that for each $s \geq 1$ there exist constants $B_s\geq 1$ and $C_s \geq 1$,
depending only on $U$, so that
\begin{align}
&|b_{s,j}(p)| \leq B_s, \quad j\geq 1, \quad p \in U \notag\\
&|c_{s,\alpha}(p)| \leq C_s, \quad \alpha \in \mathcal M_s, \quad p \in U.\label{E:coeffi}
\end{align}
We will fix the neighborhood $U$ in this paper. 

\begin{definition}
Let  $p \in U$ and $0<\delta<1$. Define $\tau_1(p,\delta)$ by 
\begin{equation}\label{E:tau1delta}
\tau_1(p,\delta)=\inf_{j\geq 1} \left\{ \left( \frac{\delta^{\frac{1}{2}}}{|b_{1,j}(p)|} \right)^{\frac{1}{j}}
\right\}.
\end{equation}
\end{definition}
\begin{remark}\label{R:welltau1}
Let $B_1$ be the constant in \eqref{E:coeffi}. For $p \in U$ and $0<\delta<1$,
\begin{equation}\label{E:lutau1}
\frac{\delta^{\frac{1}{2}}}{B_1} \leq \tau_1(p,\delta) \leq \delta^{\frac{1}{2m_1}}.
\end{equation}
Indeed, since $0<\frac{\delta^{\frac{1}{2}}}{B_1} <1$, it follows that $\frac{\delta^{\frac{1}{2}}}{B_1} \leq
\left(\frac{\delta^{\frac{1}{2}}}{B_1}\right)^{\frac{1}{j}}$ for any $j\geq 1$. Hence, since $|b_{1,j}(p)|\leq
B_1$, $j\geq 1$, $p \in U$, it follows from \eqref{E:tau1delta} that the first inequality in \eqref{E:lutau1}
holds. The second inequality in \eqref{E:lutau1} results from \eqref{E:bjmjsize}.
\end{remark}

Let $\mu>1$ be a constant. In Section \ref{S:local} we will fix the value of $\mu$, depending only on $n$, and
$m_1,\dots,m_n$. Until then we will consider $\mu>1$ as a parameter. For each $p\in U$, $\mu>1$, $\delta>0$, we
define $\tau_s(p,\mu,\delta)$, $s\geq 1$, inductively. Set
\begin{equation}
\tau_1(p, \mu, \delta)=\tau_1(p,\delta),  \quad p \in U, \quad \mu>1,  \quad \delta>0.
\end{equation}

Let $s\geq 2$ and we assume that for $p\in U$, $\mu>1$, and $\delta>0$,  we have already constructed
$\tau_i(p,\mu,\delta)$, $1\leq i <s$. We construct $\tau_s(p,\mu,\delta)$ as follows: For simplicity, we replace
by $w$  the last variable, $u_s$, in $f^p_s$. Define
\begin{equation}\label{E:DefFspw}
F_s^p(|w|)=\sup_{j\geq 1} |b_{s,j}(p) w^j|,
\end{equation}
and 
\begin{equation}\label{E:DefSdeltap}
C_{s,\delta}^p(\mu,|w|)=\sup_{\alpha \in \mathcal M_s} \left\{ \delta^{\frac{1}{2}},\; \mu \cdot\left|
c_{s,\alpha}(p) \left[\prod_{i=1}^{s-1} (\tau_i(p,\mu,\delta))^{\alpha_i}\right] w^{\alpha_s} \right| \right\}.
\end{equation}
Here we substitute $\tau_i(p,\mu,\delta)$ for $u_i$, $1 \leq i <s$, in each term, $c_{s,\alpha}u^{\alpha}$, in
\eqref{E:fppower}. 
We want to define $\tau_{s}(p,\mu,\delta)$ by 
\begin{equation}\label{E:Deftaus}
\tau_{s}(p,\mu,\delta)= \inf \left\{|w| : F_s^p(|w|) \geq C_{s,\delta}^p(\mu,|w|), \quad 0<|w| <1\right\}.
\end{equation}
We say that $\tau_1(p,\mu,\delta), \dots, \tau_n(p,\mu,\delta)$ form an
\emph{approximate system at $p$ with respect to $\mu>1$ and $\delta>0$}.

In Proposition \ref{P:welldefined} we will show that the set in \eqref{E:Deftaus} is nonempty. To do so we need
the following constants. For each $\mu>1$ set $M_{\mu,1}=1$. Let $C_s$, $2\leq s \leq n$, be the constants in
\eqref{E:coeffi} and define $M_{\mu,s}$, $2\leq s\leq n$ by
\begin{equation}\label{E:Mmus}
M_{\mu,s}= \max \left \{ 1, (\mu C_s M_{\mu,1})^{\frac{1}{m_s}},\dots, (\mu C_s M_{\mu,s-1})^{\frac{1}{m_s}}
\right \},
\end{equation}
Note that each $M_{\mu,s}\geq 1$ depends only on $U$, $\mu$, and $m_1,\dots,m_s$, and that
\begin{equation}\label{E:Msmuinequal}
\mbox{if $\mu_1> \mu_2$, then $M_{\mu_1,s} \geq M_{\mu_2,s}$,\quad $1\leq s \leq n$.}
\end{equation}
Set $\Delta_1=\frac{1}{2}$ and define $\Delta_s$, $2\leq s \leq n$ by
\begin{equation}\label{E:Defdeltas}
\Delta_s=\min \left\{\Delta_1,\dots,\Delta_{s-1}, \frac{1}{2} \left(M_{\mu,s} \right)^{-2m_1 \dots m_{s}}
\right\}.
\end{equation}
Let us denote
\begin{equation}\label{E:deltamu}
\delta_{\mu} = \min\{\Delta_1,\dots,\Delta_n\}.
\end{equation}
Clearly, $\delta_{\mu}$ depend only on $U$, $\mu$, and $m_1,\dots,m_n$, so that
\begin{equation}
0 < \delta_{\mu}=\Delta_n \leq \dots \leq \Delta_1\leq \frac{1}{2},
\end{equation}
and  if
$\mu_1\geq\mu_2$ then $\delta_{\mu_1} \leq \delta_{\mu_2}$.

\begin{proposition}\label{P:welldefined}
For   all
$p \in U$, $\mu>1$, $\delta$ with $0<\delta \leq \delta_{\mu}$, and $s=1,\dots,n$,
the set in \eqref{E:Deftaus} is nonempty.
Furthermore, each $\tau_s(p,\mu,\delta)$ satisfies
\begin{equation}\label{E:lutaus}
\tau_s(p,\mu,\delta) \leq M_{\mu,s} \delta^{\frac{1}{2m_1\dots m_s}}<1.
\end{equation}
\end{proposition}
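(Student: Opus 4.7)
The plan is to prove the proposition by induction on $s$. The base case $s=1$ is essentially Remark \ref{R:welltau1}: $\tau_1(p,\mu,\delta)=\tau_1(p,\delta)\le \delta^{1/(2m_1)}=M_{\mu,1}\,\delta^{1/(2m_1)}$, and $\delta\le\delta_\mu\le 1/2$ forces $\delta^{1/(2m_1)}<1$. (For $s=1$ the set \eqref{E:Deftaus} is not used, so there is nothing to verify there.) For the inductive step assume the bound $\tau_i(p,\mu,\delta)\le M_{\mu,i}\,\delta^{1/(2m_1\cdots m_i)}<1$ for every $i<s$, and fix the candidate value
\[
w_0=M_{\mu,s}\,\delta^{1/(2m_1\cdots m_s)}.
\]
The goal is to show $w_0<1$ and $F_s^p(w_0)\ge C_{s,\delta}^p(\mu,w_0)$, which places $w_0$ in the set \eqref{E:Deftaus} and simultaneously produces the desired upper bound on $\tau_s$.

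The inequality $w_0<1$ follows immediately from $\delta\le\delta_\mu\le\Delta_s\le\tfrac12 M_{\mu,s}^{-2m_1\cdots m_s}$. For the lower bound on $F_s^p(w_0)$, I use \eqref{E:DefFspw} with the single exponent $j=m_s$ and the normalization $|b_{s,m_s}(p)|\ge 1$ from \eqref{E:bjmjsize}:
\[
F_s^p(w_0)\ \ge\ |b_{s,m_s}(p)|\,w_0^{\,m_s}\ \ge\ w_0^{\,m_s}\ =\ M_{\mu,s}^{m_s}\,\delta^{1/(2m_1\cdots m_{s-1})}.
\]
The substantive step is to bound $C_{s,\delta}^p(\mu,w_0)$ by the same quantity. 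For the constant term, $\delta^{1/2}\le \delta^{1/(2m_1\cdots m_{s-1})}\le M_{\mu,s}^{m_s}\delta^{1/(2m_1\cdots m_{s-1})}$. For a general $\alpha\in\mathcal{M}_s$, using \eqref{E:coeffi} and the inductive hypothesis, the corresponding term is at most
\[
\mu\,C_s\prod_{i=1}^{s-1}\bigl(M_{\mu,i}\,\delta^{1/(2m_1\cdots m_i)}\bigr)^{\alpha_i}\cdot\bigl(M_{\mu,s}\,\delta^{1/(2m_1\cdots m_s)}\bigr)^{\alpha_s}.
\]
Because $\delta\le\Delta_i$, each base $M_{\mu,i}\delta^{1/(2m_1\cdots m_i)}$ with $1\le i\le s$ is $<1$, so (a) the factor corresponding to $\alpha_s$ is $\le 1$, and (b) for indices $i<s$ with $\alpha_i\ge 1$ the factor is $\le M_{\mu,i}\,\delta^{1/(2m_1\cdots m_i)}$, while for $\alpha_i=0$ the factor equals $1$. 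Since $\alpha\in\mathcal{M}_s$ forces some $i_0<s$ with $\alpha_{i_0}\ge 1$, the whole product collapses to at most $M_{\mu,i_0}\delta^{1/(2m_1\cdots m_{i_0})}\le M_{\mu,i_0}\delta^{1/(2m_1\cdots m_{s-1})}$. Finally, the defining inequality $M_{\mu,s}^{m_s}\ge \mu C_s M_{\mu,i_0}$ from \eqref{E:Mmus} yields the bound $M_{\mu,s}^{m_s}\delta^{1/(2m_1\cdots m_{s-1})}$ uniformly in $\alpha$.

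Combining these estimates gives $C_{s,\delta}^p(\mu,w_0)\le M_{\mu,s}^{m_s}\delta^{1/(2m_1\cdots m_{s-1})}\le F_s^p(w_0)$, so $w_0$ belongs to the set in \eqref{E:Deftaus}; hence that set is nonempty and $\tau_s(p,\mu,\delta)\le w_0=M_{\mu,s}\delta^{1/(2m_1\cdots m_s)}<1$, completing the induction. The main obstacle, and the reason for the elaborate definitions of $M_{\mu,s}$ and $\Delta_s$, is ensuring simultaneously that (i) the recursively built bases $M_{\mu,i}\delta^{1/(2m_1\cdots m_i)}$ are all $<1$ so that high powers of $\alpha_i$ only help, and (ii) the constants $\mu C_s M_{\mu,i}$ are absorbed by $M_{\mu,s}^{m_s}$; the constants in \eqref{E:Mmus}--\eqref{E:Defdeltas} are engineered exactly to make this single bookkeeping step work for every multi-index $\alpha\in\mathcal{M}_s$ at once.
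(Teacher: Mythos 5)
Your proof is correct and follows essentially the same strategy as the paper: bound $C_{s,\delta}^p(\mu,|w|)$ from above by $M_{\mu,s}^{m_s}\delta^{1/(2m_1\cdots m_{s-1})}$ using the inductive hypothesis and the definition \eqref{E:Mmus} of $M_{\mu,s}$, then produce a value of $|w|$ in $(0,1)$ at which $F_s^p$ meets that bound. The one small difference is that you posit the explicit candidate $w_0=M_{\mu,s}\delta^{1/(2m_1\cdots m_s)}$ and check $F_s^p(w_0)\ge w_0^{m_s}=M_{\mu,s}^{m_s}\delta^{1/(2m_1\cdots m_{s-1})}$ directly, whereas the paper invokes the intermediate value theorem to locate a $w_0$ with $F_s^p(|w_0|)$ exactly equal to the bound and then solves backward for $|w_0|$; your version avoids the IVT step and is marginally more direct, but the content is the same.
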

\begin{proof}
Fix $\mu>1$ and let $\Delta_s$ be the constants in \eqref{E:Defdeltas}. We will show the proposition by induction
on $s$. It follows from Remark \ref{R:welltau1} that \eqref{E:lutaus} holds for $s=1$. Assume inductively that
$\tau_i(p,\mu,\delta)$ is well-defined for $p\in U$, $0 <\delta \leq \Delta_i$, and $i<s$, and that
\eqref{E:lutaus} holds when $i<s$.

Since  $|c_{s,\alpha}(p)| \leq C_s$ for $\alpha \in \mathcal M_s$, it follows from \eqref{E:DefSdeltap} that if
$p \in U$ and $|w| \leq 1$, then 
\begin{equation}\label{E:deltacssup}
\delta^{\frac{1}{2}} \leq C_{s,\delta}^{p}(\mu, |w|) \leq \sup_{\alpha \in \mathcal M_s} \left\{
\delta^{\frac{1}{2}},\; \mu \cdot C_s \cdot \prod_{i=1}^{s-1} (\tau_i(p,\mu,\delta))^{\alpha_i}  \right\}
\end{equation}
If $0<\delta\leq \min\{\Delta_1,\dots,\Delta_{s-1}\}$, then  for any $p \in U$ and $|w|\leq 1$,
\begin{align}
C_{s,\delta}^{p}(\mu,|w|) &\leq \sup \left\{ \delta^{\frac{1}{2}}, \; \mu C_s\tau_1(p,\mu,\delta), \dots, \mu C_s
\tau_{s-1}(p,\mu,\delta) \right\} \notag\\
& \leq \sup \left\{ \delta^{\frac{1}{2}}, \;\mu  C_s  M_{\mu,1} \delta^{\frac{1}{2m_1}}, \dots, \;\mu C_s
M_{\mu,s-1}\delta^{\frac{1}{2m_1\dots m_{s-1}}} \right\} \notag\\
& \leq (M_{\mu,s})^{m_s} \delta^{\frac{1}{2m_1\dots m_{s-1}}} . \label{E:CpsdeltaMs}
\end{align}
Indeed, the first inequality results from \eqref{E:deltacssup}, since $\alpha_1+\dots+\alpha_{s-1}\geq 1$ for
$\alpha \in \mathcal M_s$, and $\tau_i(p,\mu,\delta)<1$, $1\leq i<s$ by \eqref{E:lutaus}. The second inequality
is obtained from \eqref{E:lutaus}, and the last inequality follows from \eqref{E:Mmus}.

We combine \eqref{E:deltacssup} and  \eqref{E:CpsdeltaMs} to obtain that if  $0<\delta \leq \Delta_s$, then
\begin{equation}\label{E:Cvalue}
\delta^{\frac{1}{2}}\leq C_{s,\delta}^p(\mu, |w|)\leq (M_{\mu,s})^{m_s} \delta^{\frac{1}{2m_1\dots m_{s-1}}}
 < 1, \quad p\in U, \quad |w|\leq 1.
\end{equation}
In fact, the last inequality follows from \eqref{E:Defdeltas}, that is,
\[
\left(M_{\mu,s}\right)^{m_s} \Delta_s^{\frac{1}{2m_1\dots m_{s-1}}} \leq
\left(\frac{1}{2}\right)^{\frac{1}{2m_1\dots m_{s-1}}} <1.
\]
Note that since each term of $F_s^p(|w|)$,  is a continuous monotone increasing function in $|w|$, $F_s^p(|w|)$
in \eqref{E:DefFspw} is a continuous monotone increasing function in $|w|$. Furthermore, $F_s^p(|w|)$ satisfies
\begin{equation}\label{E:Fat0}
F_s^p(0)=0, \quad F_s^p(1)\geq 1.
\end{equation}
Indeed, since
\[
F_s^p(|w|)\geq |b_{s,m_s}(p) w^j|=|b_{s,m_s}(p)|\geq 1, \quad \mbox{if} \quad |w|=1,
\]
the inequality in \eqref{E:Fat0} holds.

Combining \eqref{E:Cvalue}, \eqref{E:Fat0}, and the intermediate value theorem, we conclude  that for each $p \in
U$ and $\delta$ with $0<\delta<\Delta_s$, there exists $w_0$ with $0<|w_0|<1$ so that 
\begin{equation}\label{E:w0equ}
F_s^p(|w_0|)= (M_{\mu,s})^{m_s}\delta^{\frac{1}{2m_1\dots m_{s-1}}} \geq C_{s,\delta}^p(\mu,|w_0|) \geq
\delta^{\frac{1}{2}}.
\end{equation}
Hence, the set in \eqref{E:Deftaus} is nonempty and  contains $|w_0|$. Furthermore, it follows from
\eqref{E:Deftaus} that  $\tau_s(p,\mu,\delta)$ satisfies 
\begin{equation}\label{E:tausw0}
\tau_s(p,\mu, \delta) \leq |w_0|.
\end{equation}
Since $|b_{s,m_s}(p)| \geq 1$, it follows from \eqref{E:DefFspw}, \eqref{E:Cvalue}, and \eqref{E:w0equ} that for
each $p \in U$ and $\delta$ with $0<\delta<\Delta_s$
\[
|w_0^{m_s}| \leq |b_{s,m_s}(p)w_0^m| \leq F_s^p(|w_0|)=( M_{\mu,s})^{m_s}\delta^{\frac{1}{2m_1\dots m_{s-1}}}<1.
\]
Hence, it implies that
\begin{equation}\label{E:w0final}
|w_0| \leq M_{\mu,s} \delta^{\frac{1}{m_1\dots m_s}}<1.
\end{equation}
Combine \eqref{E:tausw0} and \eqref{E:w0final} to obtain that for $p \in U$  and $\delta$ with
$0<\delta<\Delta_s$ 
\begin{equation}\label{E:Uniupper}
\tau_s(p,\mu, \delta) \leq |w_0| \leq M_{\mu,s} \delta^{\frac{1}{m_1\dots m_s}}<1.
\end{equation}
\end{proof}
\begin{proposition}\label{P:Lowertaus}
Let $p \in U$, $\mu>1$, and $0<\delta\leq \delta_{\mu}$, and let $B_s$ be the constants in \eqref{E:coeffi}. For
each $s=1,\dots,n$, $\tau_s(p,\mu,\delta)$ satisfies
\begin{equation}\label{E:Lowertaus}
\frac{\delta^{\frac{1}{2}}}{B_s} \leq \tau_s(p,\mu,\delta).
\end{equation}
\end{proposition}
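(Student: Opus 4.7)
The plan is to use the bound $|b_{s,j}(p)| \leq B_s$ from \eqref{E:coeffi} together with the inequality $|w|^j \leq |w|$ for $j \geq 1$ and $|w| < 1$ to get an upper bound $F_s^p(|w|) \leq B_s |w|$, then compare against the trivial lower bound $C_{s,\delta}^p(\mu,|w|) \geq \delta^{1/2}$ built into the definition \eqref{E:DefSdeltap}.

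In more detail, fix $p \in U$, $\mu > 1$, and $0 < \delta \leq \delta_\mu$. Suppose $|w_0|$ is any value in the set appearing in \eqref{E:Deftaus}, so that $0 < |w_0| < 1$ and $F_s^p(|w_0|) \geq C_{s,\delta}^p(\mu, |w_0|)$. From \eqref{E:DefSdeltap} we always have $C_{s,\delta}^p(\mu, |w_0|) \geq \delta^{1/2}$, hence $F_s^p(|w_0|) \geq \delta^{1/2}$. On the other hand, for each $j \geq 1$,
\begin{equation*}
|b_{s,j}(p) w_0^j| \leq B_s |w_0|^j \leq B_s |w_0|,
\end{equation*}
since $|w_0| < 1$, so by \eqref{E:DefFspw}, $F_s^p(|w_0|) \leq B_s |w_0|$. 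Combining these two inequalities gives $|w_0| \geq \delta^{1/2}/B_s$. Taking the infimum over all such $|w_0|$ yields \eqref{E:Lowertaus}.

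There is no real obstacle here: the argument only uses the sup-norm bound \eqref{E:coeffi} on the coefficients $b_{s,j}(p)$ together with the fact that the competitor $C_{s,\delta}^p(\mu,|w|)$ is floored at $\delta^{1/2}$ by construction. The role of the hypothesis $0 < \delta \leq \delta_\mu$ is not used directly in this step; it enters only through Proposition~\ref{P:welldefined}, which guarantees that $\tau_s(p,\mu,\delta)$ is defined in the first place so that the statement \eqref{E:Lowertaus} is meaningful.
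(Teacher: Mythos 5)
Your proof is correct and uses the same two ingredients as the paper's: the coefficient bound $|b_{s,j}(p)|\leq B_s$ combined with $|w_0|^j\leq|w_0|$ to get $F_s^p(|w_0|)\leq B_s|w_0|$, and the floor $C_{s,\delta}^p(\mu,|w_0|)\geq\delta^{1/2}$ from \eqref{E:DefSdeltap}. The paper frames the argument as a proof by contradiction starting from the identity $F_s^p(\tau_s)=C_{s,\delta}^p(\mu,\tau_s)\geq\delta^{1/2}$, whereas you argue directly on arbitrary elements of the set in \eqref{E:Deftaus} and then take the infimum, which is an equivalent and slightly cleaner presentation.
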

\begin{proof}
It follows from \eqref{E:DefSdeltap} and \eqref{E:Deftaus} that
\begin{equation}\label{E:Fsbigdelta}
F^p_s(\tau_s(p,\mu,\delta)) = C_{s,\delta}^p(\mu, \tau_s(p,\mu,\delta)) \geq \delta^{\frac{1}{2}}, \quad p \in U,
\quad 0<\delta\leq \delta_{\mu}.
\end{equation}
Let $|w_0|=\tau_s(p,\mu,\delta)$. Suppose that there exist $p \in U$, $\mu>1$, and $\delta$ with $0<\delta\leq
\delta_{\mu}$, for which \eqref{E:Lowertaus} does not hold.  We will obtain a contradiction to
\eqref{E:Fsbigdelta} by showing that
\begin{equation}\label{E:Fssmalldelta}
F^p_s(|w_0|) < \delta^{\frac{1}{2}}, \quad p \in U, \quad 0<\delta\leq \delta_{\mu}.
\end{equation}

Since we assumed that $|w_0|<\frac{\delta^{\frac{1}{2}}}{B_s}$, there is a constant $c$ with $0<c<1$ such that
\begin{equation}\label{E:w0c}
|w_0| =c \frac{\delta^{\frac{1}{2}}}{B_s}<1,
\end{equation}
where the second inequality results from $B_s\geq 1$ and $\delta_{\mu}<1$. Since
$|w_0|^j \leq |w_0|=c \frac{\delta^{\frac{1}{2}}}{B_s},$
and since $|b_{s,j}(p)|\leq B_s$ for all $j\geq 1$,  it follows  that
\begin{equation}
|b_{s,j}(p)w_0^j|\leq |b_{s,j}(p)||w_0| \leq B_s c\frac{\delta^{\frac{1}{2}}}{B_s} =c \delta^{\frac{1}{2}}, \quad
j\geq 1.
\end{equation}
Therefore, it follows from \eqref{E:DefFspw} that
$F^p_s(|w_0|))\leq c \delta^{\frac{1}{2}}< \delta^{\frac{1}{2}},$
which contradicts to \eqref{E:Fsbigdelta}.
\end{proof}
\begin{remark}\label{R:UniSmall}
Since $\tau_s(p,\mu,\delta) \leq M_{\mu,s} \delta^{\frac{1}{2m_1\dots m_s}}$, $1 \leq s \leq n$, and since
$M_{\mu,s}$ depends only on $U$, $\mu$, and $m_1,\dots,m_s$, it follows that for each fixed $\mu$,
$\lim_{\delta \to 0} \tau_s(p,\mu,\delta)=0$ uniformly in $p \in U$.
\end{remark}

\section{Invariants of a triangular system}\label{S:types}
In this section we introduce two kinds of integer invariants, called \emph{dominant} and \emph{mixed types}, at
$p \in U$ with respect to  $\mu>1$ and $\delta$ with $0 < \delta<\delta_{\mu}$. After shrinking $\delta_{\mu}$
for each $\mu>1$, we will show that there are only finitely many both dominant and mixed types.
\begin{definition}[dominant types]\label{D:DominantType}
Let $p \in U$, $\mu>1$, and $0<\delta\leq \delta_{\mu}$.  For simplicity we write $\tau_s$ for
$\tau_s(p,\mu,\delta)$, $1\leq s \leq n$. Define 
\begin{equation}
J_s(p,\mu,\delta)= \min \{ j\geq 1 :F_s^p(\tau_s)=|b_{s,j}(p)\tau_s^j|\}, \quad 1\leq s \leq n.
\end{equation}
We say that  $J_s(p,\mu,\delta)$ is the \emph{$s$-th dominant type at $p\in U$ with respect to $\mu$ and
$\delta$}.
\end{definition}
\begin{definition}[mixed types] \label{D:mixedtype}
It follows from \eqref{E:Deftaus} that for each $s=2,\dots, n$ we have the following two cases:
\begin{itemize}
\item [(i)]  There exists a multi-index $\alpha \in \mathcal M_s$ such that $\tau_s(p,\mu,\delta)$ satisfies
\begin{equation}\label{E:mixed}
F_s^p(\tau_s)= |\mu \cdot c_{s,\alpha}(p) \tau_1^{\alpha_1} \dots \tau_s^{\alpha_s}| \geq \delta^{\frac{1}{2}}.
\end{equation}
\item [(ii)] For all $\alpha \in \mathcal M_s$
\begin{equation}\label{E:nomixed}
F_s^p(\tau_s)=\delta^{\frac{1}{2}} > |\mu \cdot c_{s,\alpha}(p) \tau_1^{\alpha_1} \dots \tau_s^{\alpha_s}|.
\end{equation}
\end{itemize}
For the first case we write $K_{s}(p,\mu,\delta)$ for a multi-index $K=(k_1^s,\dots,k_n^s) \in \mathcal M_s$
satisfying
\begin{itemize}
\item [(i)] \eqref{E:mixed} holds for $K$, and \item [(ii)] if   \eqref{E:mixed} holds for
$\alpha=(\alpha_1,\dots,\alpha_s,0,\dots,0)$, then the $s$-th indices satisfy $k_s^s \leq \alpha_s$,
\end{itemize}
In this case, we say that $K_s(p,\mu,\delta)$ is  a \emph{$s$-th mixed type at $p$ with respect to $\mu$ and
$\delta$}. 
In the second case, we say that there is \emph{no $s$-th mixed type at $p$ with respect to $\mu$ and $\delta$}
and we simply write $K_s(p,\mu,\delta)=(0,\dots,0)$.
\end{definition}

\begin{proposition}\label{P:FinitePure}
For each $\mu>1$ there exists $\delta_{\mu}'$, depending only on $\mu$ and $m_1,\dots,m_n$, so that if $p \in U$
and $0<\delta\leq \delta'_{\mu}$, then
\begin{equation}\label{E:FiniteDomi}
1\leq J_s(p,\mu,\delta) \leq m_s, \quad 1\leq s \leq n.
\end{equation}
Furthermore, $\delta'_{\mu}$ satisfies $0< \delta_{\mu}' \leq \delta_{\mu}$, where $\delta_{\mu}$ is constructed
in \eqref{E:deltamu}.
\end{proposition}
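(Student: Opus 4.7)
The lower bound $J_s(p,\mu,\delta)\geq 1$ is immediate from the definition of $J_s$, so the whole task is to show $J_s(p,\mu,\delta)\leq m_s$ after suitably shrinking $\delta_\mu$. Recalling that $J_s$ is the smallest index $j$ achieving the supremum $F_s^p(\tau_s)=\sup_{j\geq 1}|b_{s,j}(p)\tau_s^j|$, it suffices to show that \emph{for every $j>m_s$ we have $|b_{s,j}(p)\tau_s^j|\leq |b_{s,m_s}(p)\tau_s^{m_s}|$}; this will force the supremum to be attained at some index $\leq m_s$, hence $J_s\leq m_s$.

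The key observation is that for $j\geq m_s+1$,
\[
|b_{s,j}(p)\tau_s^j|\;\leq\; B_s\,\tau_s^j\;=\;B_s\,\tau_s^{\,j-m_s}\,\tau_s^{m_s}\;\leq\;(B_s\tau_s)\,\tau_s^{m_s},
\]
using \eqref{E:coeffi} and $\tau_s\leq 1$ from Proposition \ref{P:welldefined}. On the other hand, by \eqref{E:bjmjsize},
\[
|b_{s,m_s}(p)\tau_s^{m_s}|\;\geq\;\tau_s^{m_s}.
\]
So the desired domination will hold as soon as $B_s\tau_s(p,\mu,\delta)\leq 1$ for each $s$. The plan is to enforce this by choosing $\delta$ small. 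Using the upper bound $\tau_s(p,\mu,\delta)\leq M_{\mu,s}\delta^{1/(2m_1\cdots m_s)}$ from \eqref{E:lutaus}, the inequality $B_s\tau_s\leq 1$ is guaranteed whenever $\delta\leq (M_{\mu,s}B_s)^{-2m_1\cdots m_s}$.

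Accordingly, I would define
\[
\delta'_\mu \;=\; \min\!\left\{\,\delta_\mu,\;\min_{1\leq s\leq n}(M_{\mu,s}B_s)^{-2m_1\cdots m_s}\,\right\},
\]
which is positive and depends only on $U$, $\mu$, and $m_1,\dots,m_n$, and satisfies $\delta'_\mu\leq\delta_\mu$ by construction. With this choice, for $0<\delta\leq\delta'_\mu$ and each $s$, the two displayed inequalities combine to give $|b_{s,j}(p)\tau_s^j|\leq|b_{s,m_s}(p)\tau_s^{m_s}|\leq F_s^p(\tau_s)$ for every $j\geq m_s+1$. Consequently, the supremum defining $F_s^p(\tau_s)$ is realized at some $j_0\in\{1,\dots,m_s\}$ (it is actually attained, since $|b_{s,j}(p)\tau_s^j|\to 0$ as $j\to\infty$ by \eqref{E:bsjtozero} together with $\tau_s<1$), and therefore $J_s(p,\mu,\delta)\leq j_0\leq m_s$, as required.

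There is no real obstacle here; the argument is essentially a bookkeeping consequence of the uniform coefficient bounds $|b_{s,m_s}(p)|\geq 1$ and $|b_{s,j}(p)|\leq B_s$ combined with the smallness estimate \eqref{E:lutaus}. The only minor care needed is to verify that $\delta'_\mu$ depends only on the allowed data and that the chain of inequalities $B_s\tau_s\leq 1$, $\tau_s\leq 1$, $|b_{s,m_s}(p)|\geq 1$ all remain in force simultaneously, which is automatic once $\delta'_\mu$ is defined as above.
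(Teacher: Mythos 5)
Your proposal is correct and follows essentially the same route as the paper: the paper packages the inequality $|b_{s,j}\tau_s^j|\leq |b_{s,m_s}\tau_s^{m_s}|$ for $j>m_s$ and $\tau_s\leq B_s^{-1}$ into a separate Lemma (\ref{L:Dominance}), then defines $\delta'_\mu=\min\{\delta_\mu,(B_1M_{\mu,1})^{-2m_1},\dots,(B_nM_{\mu,n})^{-2m_1\cdots m_n}\}$ and uses \eqref{E:lutaus} exactly as you do to enforce $\tau_s\leq B_s^{-1}$. Your choice of $\delta'_\mu$ is literally identical, and your remark about attainment of the supremum is a correct point that the paper handles via the finiteness statement of Lemma \ref{L:Dominance}.
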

In order to prove Proposition \ref{P:FinitePure}, we at first state an elementary fact about a set of monomials
with positive coefficients in a real positive variable. We shall divide the positive real line into a finite set
of intervals on which one of monomials dominates the others.
\begin{lemma}\label{L:Mono}
Let $g_j(x)$ be monomials in a real variable $x>0$ such that 
\begin{equation}
g_j(x)=C_j x^{m_j}, \quad j=1,\dots,N,
\end{equation}
where $0<m_1< \dots< m_N$ and $C_j>0$. Let
\begin{equation}
g(x)=\max\{g_j(x) : 1 \leq j \leq N\}.
\end{equation}
Then there exist  integers $j_k$, $k=1,\dots,q$  such that 
\begin{equation}
0 < j_1 <\dots <j_q=N,
\end{equation}
and points $\bar{x}_{k}$, $k=0, \dots,q$ with $0=\bar{x}_{0}< \bar{x}_{1} < \dots < \bar{x}_{q}=\infty$ and 
\begin{equation}
g(x)=g_{j_k}(x), \quad \bar{x}_{k-1}\leq x \leq \bar{x}_{k}.
\end{equation}
Furthermore, $j_k$ satisfies
\begin{equation}\label{E:jkminimum}
j_k=\min\{j: g(\bar x_k)=g_j(\bar x_k)\},
\end{equation}
and  we have $g_{j_k}(\bar{x}_{k})=g_{j_{k+1}}(\bar{x}_{k})$, $k=1,\dots,q-1$.
\end{lemma}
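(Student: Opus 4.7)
The plan is to reduce the statement to the standard structure theorem for the upper envelope of finitely many affine functions via a logarithmic change of variables. Setting $t=\log x$ and $\phi_j(t)=\log g_j(e^t)=\log C_j+m_j t$, each $\phi_j$ becomes an affine function of $t$ with slope $m_j$, and
\[
\phi(t):=\log g(e^t)=\max_{1\le j\le N}\phi_j(t)
\]
is the pointwise maximum of $N$ distinct lines, hence continuous, piecewise linear, and convex on $\mathbb{R}$ with only finitely many breakpoints.

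For each $j$ let $E_j=\{t\in\mathbb{R}:\phi(t)=\phi_j(t)\}$. Since $\phi-\phi_j$ is convex and non-negative, $E_j$ is a closed (possibly empty, single-point, or unbounded) interval. Let $J=\{j:E_j\text{ has nonempty interior}\}$ and enumerate its elements in increasing order as $j_1<j_2<\dots<j_q$; these are exactly the monomials that actually appear on the envelope. Because $\phi$ is convex, its slope is non-decreasing as $t$ increases, so the active intervals $E_{j_k}$ must occur in the same left-to-right order as their slopes $m_{j_k}$, which in turn matches the ordering of the indices. Writing $E_{j_k}=[t_{k-1},t_k]$ with $-\infty=t_0<t_1<\dots<t_{q-1}<t_q=+\infty$ and setting $\bar x_k=e^{t_k}$ (with the conventions $\bar x_0=0$ and $\bar x_q=\infty$), exponentiation yields $g(x)=g_{j_k}(x)$ on $[\bar x_{k-1},\bar x_k]$, while continuity of $\phi$ at each interior $t_k$ gives $g_{j_k}(\bar x_k)=g_{j_{k+1}}(\bar x_k)$ for $1\le k\le q-1$. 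As $t\to+\infty$ the line with the largest slope $m_N$ eventually dominates, forcing $j_q=N$.

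It remains to verify the minimality formula \eqref{E:jkminimum}. Suppose for contradiction that some index $j^*$ with $j^*<j_k$ (so $m_{j^*}<m_{j_k}$) were to satisfy $\phi_{j^*}(t_k)=\phi(t_k)$. Then $\phi_{j^*}$ and $\phi_{j_k}$ would agree at $t_k$ while $\phi_{j^*}$ has strictly smaller slope, so for $t$ slightly less than $t_k$ one would have $\phi_{j^*}(t)>\phi_{j_k}(t)=\phi(t)$, contradicting the definition of $\phi$ as the upper envelope. Hence no index strictly less than $j_k$ achieves the maximum at $\bar x_k$, which is exactly \eqref{E:jkminimum}.

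The argument is essentially the classical Newton-polygon (or lower-convex-hull) description of the envelope of a family of monomials, and no step poses a real obstacle: the sole conceptual ingredient is slope-monotonicity of a piecewise-linear convex function, and the remainder is bookkeeping about breakpoints and how they index the dominant monomials.
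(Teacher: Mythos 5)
The paper states Lemma~\ref{L:Mono} as an elementary fact and provides no proof of it, so there is no argument in the text to compare against. Your argument is correct and complete. The logarithmic substitution $t=\log x$, $\phi_j(t)=\log C_j+m_j t$ turns $g$ into the upper envelope of $N$ distinct affine functions, which is convex and piecewise linear; the sets $E_j=\{\phi=\phi_j\}$ are closed intervals because $\phi-\phi_j$ is convex and nonnegative, and two distinct lines can agree on at most one point, so the intervals with nonempty interior partition $\mathbb{R}$ up to shared endpoints. Slope-monotonicity of the convex envelope forces these intervals to occur in increasing order of slope, hence of index, and the largest slope $m_N$ must dominate as $t\to+\infty$, giving $j_q=N$. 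Continuity at each interior breakpoint $t_k$ yields $g_{j_k}(\bar x_k)=g_{j_{k+1}}(\bar x_k)$, and the minimality claim~\eqref{E:jkminimum} follows from the observation that a line of strictly smaller slope touching the envelope at $t_k$ would exceed it immediately to the left, contradicting that $\phi=\phi_{j_k}$ on $(t_{k-1},t_k]$. This is precisely the Newton-polygon/lower-convex-hull description one would expect here, and since the paper omits the verification entirely, your write-up would serve as a suitable proof for the lemma as stated.
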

\begin{lemma}\label{L:Dominance}
Let $h(w)=\sum_{j=1}^{\infty} b_j w^j$ be  holomorphic  in $\{w: |w| <\frac{3}{2}\}$. Let
\begin{equation}
F(|w|)=\sup\{|b_jw^j|:j\geq 1\}.
\end{equation}
Suppose
that $|b_m|\geq 1$ and that there exists a constant $B\geq 1$ such that $|b_j| \leq B$, $j\geq 1$.
If $|w|\leq B^{-1}$, then $F(|w|)$ ie determined by the first $m$-terms, that is, 
\begin{equation}\label{E:finiteFw}
F(|w|)=\max\{|b_jw^j| : 1 \leq j \leq m \}, \quad |w|\leq B^{-1}.
\end{equation}
\end{lemma}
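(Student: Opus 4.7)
The plan is to prove the lemma by showing that every term with index $j>m$ is dominated by the single term $|b_m w^m|$ whenever $|w|\leq B^{-1}$, so that all such tail terms can be safely discarded from the supremum defining $F$.

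The key estimate is elementary. First I would split off one factor of $w$ to write $|b_j w^j|=|b_j|\,|w|\cdot |w|^{j-1}$. Using the hypotheses $|b_j|\leq B$ and $|w|\leq B^{-1}$ gives $|b_j|\,|w|\leq 1$, and so $|b_j w^j|\leq |w|^{j-1}$ for every $j\geq 1$. Next, because $B\geq 1$ forces $|w|\leq 1$, I would observe that $|w|^{j-1}\leq |w|^{m}$ for every $j\geq m+1$. Combining these two inequalities with the assumption $|b_m|\geq 1$ yields
\begin{equation*}
|b_j w^j|\;\leq\;|w|^{j-1}\;\leq\;|w|^m\;\leq\;|b_m w^m|,\qquad j\geq m+1,\ |w|\leq B^{-1}.
\end{equation*}

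From this domination it follows that the supremum defining $F(|w|)$ is attained (or at least matched) by one of the first $m$ terms: for any $j\geq m+1$, the term $|b_j w^j|$ is no larger than $|b_m w^m|$, which itself lies among the candidates in $\{|b_j w^j|:1\leq j\leq m\}$. Therefore
\begin{equation*}
F(|w|)\;=\;\sup_{j\geq 1}|b_j w^j|\;=\;\max_{1\leq j\leq m}|b_j w^j|,
\end{equation*}
which is exactly the assertion in \eqref{E:finiteFw}. The convergence of the series in $\{|w|<3/2\}$ plays no active role beyond ensuring that $F(|w|)$ is a finite supremum, since $|w|\leq B^{-1}\leq 1<3/2$.

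There is no real obstacle here; the statement is essentially an arithmetic observation isolating the contribution of the first $m$ terms under the two normalizations $|b_m|\geq 1$ and $|b_j|\leq B$. The only mild subtlety is keeping track of the single factor of $|w|$ used to absorb the bound $B$, and of the fact that $|w|\leq 1$ is automatic from $|w|\leq B^{-1}$ with $B\geq 1$; both are recorded explicitly in the estimate above.
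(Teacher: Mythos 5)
Your proof is correct and follows essentially the same line as the paper's: dominate every tail term $|b_jw^j|$, $j>m$, by $|b_mw^m|$ using $|b_j|\leq B$, $|w|\leq B^{-1}$, and $|b_m|\geq 1$. The only cosmetic difference is that you pair $|b_j|$ with a single factor $|w|$ and then compare $|w|^{j-1}$ to $|w|^m$, whereas the paper pairs $|b_j|$ with $|w|^{j-m}$ directly; the two bookkeeping choices are equivalent.
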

\begin{proof}
If  $|w| \leq  B^{-1}$, then for any $j>m$
\[
|w|^{j-m} \leq  |w|\leq B^{-1}.
\]
Since $|b_m|\geq 1$, it follows that if $j>m$, then for $|w|\leq B^{-1}$
\begin{equation*}
|b_j||w^j|  \leq |b_j| |w|^{j-m} |w|^m \leq B B^{-1} |w|^m \leq |w|^m \leq |b_m||w|^m.
\end{equation*}
Hence, we obtain \eqref{E:finiteFw}.
\end{proof}
\begin{proof}[Proof of Proposition \ref{P:FinitePure}]
Let $p \in U$ and $s=1,\dots, n$. Recall that
\[
F_s^p(|w|)=\sup_{j\geq 1} \{ |b_{s,j} (p) w^j|\},
\]
and  that $|b_{s,m_s}(p)| \geq 1$ and $|b_{s,j}(p)| \leq B_s$ with $B_s \geq 1$ in \eqref{E:coeffi}. Hence, by
Lemma \ref{L:Dominance} we obtain that for each $s=1,\dots,n$,
\begin{equation}\label{E:Fspfinite}
F^p_s(|w|)=\max_{1\leq j \leq m_s}|b_{s,j}(p)w^j|, \quad |w| \leq B_s^{-1}.
\end{equation}
Recall the constant $M_{\mu,s}$ in \eqref{E:Mmus}. For each $\mu>1$ we define $\delta'_{\mu}$ by
\begin{equation}\label{E:deltaprimemu}
\delta'_{\mu}=\min\{ \delta_{\mu}, (B_1 M_{\mu,1})^{-2m_1}, \dots, (B_n M_{\mu,n})^{-2m_1\cdots m_n}\},
\end{equation}
Clearly, $\delta'_{\mu}$ depends only on $\mu$, and $m_1,\dots,m_n$, and satisfies $0< \delta'_{\mu} \leq
\delta_{\mu}$. It follows from Proposition \ref{P:welldefined} that $\tau_s(p,\mu,\delta)$ is well-defined for
$p\in U$, $\mu>1$, and $\delta$ with $0<\delta \leq \delta'_{\mu}$. Furthermore,  we obtain that for each
$s=1,\dots,n$
\begin{equation}\label{E:tausBs}
\tau_s(p,\mu,\delta)  \leq M_{\mu,s} \delta^{\frac{1}{2m_1\cdots m_s}} \leq M_{\mu,s} (B_s
M_{\mu,s})^{-\frac{2m_1\cdots m_s}{2m_1\cdots m_s}}=B_s^{-1}.
\end{equation}
In fact, the first inequality results from \eqref{E:lutaus}, and the second inequality follows from $0 < \delta
\leq \delta'_{\mu} \leq (B_sM_{\mu,s})^{-2m_1\dots m_s}$ by \eqref{E:deltaprimemu}.

Combining \eqref{E:Fspfinite} and \eqref{E:tausBs}, we obtain that for each $s=1,\dots,n$, there exists $j$ with
$1\leq j \leq m_s$ so that $F_s^p(\tau_s)=|b_{s,j} \tau_s^j|$. Hence, by Definition \ref{D:DominantType} we
obtain \eqref{E:FiniteDomi}.
\end{proof}
To prove the finiteness of mixed types we need the following lemma.
\begin{lemma}\label{L:tauratio}
Let $p \in U$, $\mu > 1$, and let $\delta'_{\mu}$ be in \eqref{E:deltaprimemu}. If $0<a<1$ and $0<\delta\leq
\delta'_{\mu}$, then 
\begin{equation}\label{E:assumptiontaui}
\tau_s(p,\mu, a\delta) \leq a^{\frac{1}{2m_1\dots m_s}} \tau_s(p,\mu, \delta).
\end{equation}
\end{lemma}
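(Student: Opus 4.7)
The plan is to argue by induction on $s$, in each step exhibiting an explicit witness $|w|$ for the infimum defining $\tau_s(p,\mu,a\delta)$: I take $|w| = \sigma_s := a^{\frac{1}{2m_1\cdots m_s}}\tau_s(p,\mu,\delta)$ and show that $F_s^p(\sigma_s) \geq C_{s,a\delta}^p(\mu,\sigma_s)$, which by \eqref{E:Deftaus} immediately yields $\tau_s(p,\mu,a\delta) \leq \sigma_s$. Clearly $0 < \sigma_s < 1$ since $\tau_s(p,\mu,\delta) < 1$ by Proposition \ref{P:welldefined} and $0 < a < 1$.

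For the lower bound on $F_s^p(\sigma_s)$, I use the $s$-th dominant type $J_s = J_s(p,\mu,\delta)$, so that $|b_{s,J_s}(p)|\tau_s(p,\mu,\delta)^{J_s} = F_s^p(\tau_s(p,\mu,\delta))$, which by \eqref{E:Fsbigdelta} equals $C_{s,\delta}^p(\mu,\tau_s(p,\mu,\delta))$. Factoring out $a^{J_s/(2m_1\cdots m_s)}$ and invoking the crucial bound $J_s \leq m_s$ from Proposition \ref{P:FinitePure} (available because $\delta \leq \delta'_\mu$), together with $0 < a < 1$, gives
\[
F_s^p(\sigma_s) \geq a^{J_s/(2m_1\cdots m_s)}\, C_{s,\delta}^p(\mu,\tau_s(p,\mu,\delta)) \geq a^{1/(2m_1\cdots m_{s-1})}\, C_{s,\delta}^p(\mu,\tau_s(p,\mu,\delta)).
\]

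For the matching upper bound on $C_{s,a\delta}^p(\mu,\sigma_s)$, I compare termwise with $C_{s,\delta}^p(\mu,\tau_s(p,\mu,\delta))$. The inductive hypothesis gives $\tau_i(p,\mu,a\delta) \leq a^{1/(2m_1\cdots m_i)}\tau_i(p,\mu,\delta)$ for $i < s$, so each mixed-term monomial picks up a factor $a^{E_\alpha}$ with
\[
E_\alpha = \sum_{i=1}^{s-1}\frac{\alpha_i}{2m_1\cdots m_i} + \frac{\alpha_s}{2m_1\cdots m_s}.
\]
Since $\alpha \in \mathcal M_s$ forces some $\alpha_{i_0} \geq 1$ with $i_0 \leq s-1$, I get $E_\alpha \geq \frac{1}{2m_1\cdots m_{s-1}}$; and the constant term $(a\delta)^{1/2} = a^{1/2}\delta^{1/2}$ satisfies $a^{1/2} \leq a^{1/(2m_1\cdots m_{s-1})}$ because $m_1\cdots m_{s-1} \geq 1$. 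Both observations combine into
\[
C_{s,a\delta}^p(\mu,\sigma_s) \leq a^{1/(2m_1\cdots m_{s-1})}\, C_{s,\delta}^p(\mu,\tau_s(p,\mu,\delta)),
\]
which chains with the previous display to give $F_s^p(\sigma_s) \geq C_{s,a\delta}^p(\mu,\sigma_s)$, completing the induction. The base case $s=1$ is the same computation with $\mathcal M_1 = \emptyset$, so that $C_{1,\delta}^p(\mu,|w|) = \delta^{1/2}$, and the bound reduces to $a^{J_1/(2m_1)} \geq a^{1/2}$, again using $J_1 \leq m_1$.

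The only real subtlety—and the step I expect to be the trickiest to state carefully—is the exponent balancing: one has to check that the $a$-power accumulated on the mixed side ($E_\alpha$, which is a weighted sum over all prior variables) is at least $\frac{1}{2m_1\cdots m_{s-1}}$, so that the inductive degradation of the $\tau_i$'s does not outweigh the gain from the dominant term $|b_{s,J_s}(p)|\sigma_s^{J_s}$. The bookkeeping is driven entirely by the bound $J_s \leq m_s$, which is why invoking Proposition \ref{P:FinitePure} at the outset is essential.
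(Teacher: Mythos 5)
Your proof is correct and follows essentially the same route as the paper: both choose the identical witness $a^{\frac{1}{2m_1\cdots m_s}}\tau_s(p,\mu,\delta)$, use induction to propagate the $a$-power through the $\tau_i$'s in the mixed terms, and invoke $J_s\leq m_s$ from Proposition~\ref{P:FinitePure} to ensure the dominant monomial loses at most a factor $a^{1/(2m_1\cdots m_{s-1})}$. The only difference is organizational: the paper first proves $C_{s,a\delta}^p(\mu,|w|)\leq a^{1/(2m_1\cdots m_{s-1})}C_{s,\delta}^p(\mu,|w|)$ uniformly in $|w|$ (its \eqref{E:cadeltacdelta}), then uses monotonicity of $C_{s,\delta}^p$ and a $|w_0/w|^{m_s}$ manipulation before solving for $|w_1|$, whereas you evaluate both sides directly at the witness — the same inequalities, arranged as a sandwich rather than a chain.
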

\begin{proof}
We will prove the lemma by induction on $s$. Fix $p \in U$ and $\mu >1$, and replace $\tau_{s}(p,\mu,\delta)$ by
$\tau_s(\delta)$ in this proof. It follows from Proposition \ref{P:FinitePure} that
\begin{equation*}
\tau_1(a\delta) =\min_{1 \leq j \leq m_1} \left(
\frac{(a\delta)^{\frac{1}{2}}}{|b_{1,j}(p)|}\right)^{\frac{1}{j}} \leq a^{\frac{1}{2m_1}}  \min_{1 \leq j \leq
m_1} \left( \frac{\delta^{\frac{1}{2}}}{|b_{1,j}(p)|}\right)^{\frac{1}{j}}.
\end{equation*}
Hence, \eqref{E:assumptiontaui} holds for $s=1$.

Let $s$ be an integer with $2 \leq s \leq n$. Assume inductively that when $1\leq i \leq s-1$,
\eqref{E:assumptiontaui} holds for $0<\delta\leq \delta'_{\mu}$ and $0<a<1$. 
Note that $\alpha_1+\dots+\alpha_{s-1}\geq 1$ for $\alpha \in \mathcal M_s$ and that
\begin{equation}
C^p_{s,a\delta} (\mu, |w|) = \sup_{\alpha \in \mathcal M_s} \left \{  (a\delta)^{\frac{1}{2}},\; \mu
\left|c_{s,\alpha}(p) \left(\prod_{i=1}^{s-1}(\tau_i(a\delta))^{\alpha_i}\right) w^{\alpha_s} \right| \right\}.
\end{equation}
Hence, since we assumed that  \eqref{E:assumptiontaui} holds for $1\leq i \leq s-1$,  it follows that 
\begin{equation}\label{E:cadeltacdelta}
C_{s,a\delta}^p(\mu, |w|) \leq a^{\frac{1}{2m_1\dots m_{s-1}}} C_{s,\delta}^p(\mu,|w|).
\end{equation}
Set $|w_0|=\tau_s(\delta)$. Since $C_{s,a\delta}^p(\mu,|w|)$ is increasing in $|w|$, it follows from
\eqref{E:cadeltacdelta} that  
\begin{equation}\label{E:Caadelta}
C_{s,a\delta}^p(\mu,|w|) \leq a^{\frac{1}{2m_1 \dots m_{s-1}}} C_{s,\delta}(\mu,|w_0|), \quad |w|\leq |w_0|.
\end{equation}

Let $J_s$ denote the $s$-th dominant type at $p$ with respect to $\mu$ and  $\delta$. Since by \eqref{E:Deftaus}
\begin{equation}\label{E:Cdeltab}
C_{s,\delta}^p(\mu,|w_0|) = |b_{s,J_s} w_0^{J_s}|,
\end{equation}
and since $J_s \leq m_s$, it follows from \eqref{E:Caadelta} and \eqref{E:Cdeltab} that if $|w| \leq |w_0|$, then
\begin{align}
C_{s,a\delta}^p (\mu,|w|) &\leq a^{\frac{1}{2m_1\dots m_{s-1}}} |b_{s,J_s}(p) w_0^{J_s}| \notag\\
& \leq a^{\frac{1}{2m_1\dots m_{s-1}}} \left| \frac{w_0}{w} \right|^{J_s} |b_{s,J_s}(p) w^{J_s}| \notag\\
& \leq a^{\frac{1}{2m_1\dots m_{s-1}}} \left|\frac{w_0}{w} \right|^{m_s} F_s^p(|w|)\label{E:CadeltaF}
\end{align}
Set 
\begin{equation*}
|w_1| = a^{\frac{1}{2m_1 \dots m_s}} |w_0| <|w_0|,
\end{equation*}
that is, $a^{\frac{1}{2m_1 \dots m_{s-1}}}\left| \frac{w_0}{w_1} \right|^{m_s}=1$. Hence, it follows from
\eqref{E:CadeltaF} that 
\begin{equation*}
C_{s,a\delta}^p(\mu,|w_1|) \leq F_s^p(|w_1|).
\end{equation*}
Therefore, by definition of $\tau_s(a\delta)$ we have 
\begin{equation*}
\tau_s(a\delta) \leq |w_1| =a^{\frac{1}{2m_1\dots m_s}} \tau_s(\delta).
\end{equation*}
\end{proof}

\begin{proposition}\label{P:FiniteMixed}
For each $\mu>1$ there exists $\tilde \delta_{\mu}$ with $\tilde\delta_{\mu} \leq \delta'_{\mu}$ so that the
following property holds: Let $2\leq s\leq n$. Suppose that there exists a $s$-th mixed type, denoted by
 $K_s(p,\mu,\delta)=(k_1^s, \dots, k_s^s,0,\dots,0)$, at $p\in U$ with respect to $\mu$ and $\delta$
 with $0<\delta\leq \tilde \delta_{\mu}$. Then, $K_s(p,\mu,\delta)$ must
satisfy
\begin{equation}\label{E:MixedFinite}
\frac{k_1^s}{m_1} + \frac{k_2^s}{m_1m_2} + \dots + \frac{k_s^s}{m_1\dots m_s} \leq 1.
\end{equation}
Furthermore, the $s$-th index, $k_s^s$ satisfies
\begin{equation}\label{E:kslessJs}
k_s^s < J_s,
\end{equation}
where $J_s$ is the dominant type at $p$ with respect to $\mu$ and $\delta$.
\end{proposition}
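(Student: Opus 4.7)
The plan is, for each of the two assertions, to combine the identities characterizing a mixed type with the upper bounds on $\tau_i(p,\mu,\delta)$ already established.

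For the bound \eqref{E:MixedFinite}, I would start from the defining inequality $\mu|c_{s,K_s}(p)\,\tau_1^{k_1^s}\cdots\tau_s^{k_s^s}|\geq\delta^{1/2}$ of an $s$-th mixed type and plug in $\tau_i\leq M_{\mu,i}\delta^{1/(2m_1\cdots m_i)}$ from Proposition~\ref{P:welldefined}. Writing $N(K_s)=k_1^s/m_1+\cdots+k_s^s/(m_1\cdots m_s)$, this produces
\[
\mu|c_{s,K_s}(p)|\,M_{\mu,1}^{k_1^s}\cdots M_{\mu,s}^{k_s^s}\geq\delta^{(1-N(K_s))/2}.
\]
Because $|K_s|$ may be arbitrarily large, to keep the left-hand side bounded I would appeal to Cauchy's estimate: since the radius of convergence of $f_s^p$ at $p$ exceeds $3/2$ uniformly on $U$, there is a constant $A_s$ depending only on $U$ with $|c_{s,\alpha}(p)|\leq A_s(2/3)^{|\alpha|}$. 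Setting $M=\max_i M_{\mu,i}$, the left-hand side is at most $\mu A_s(2M/3)^{|K_s|}$. If $N(K_s)>1$, then since $N(K_s)$ is a rational with denominator dividing $m_1\cdots m_s$ one has $N(K_s)-1\geq1/(m_1\cdots m_s)$, and together with the elementary bound $|K_s|\leq(m_1\cdots m_s)N(K_s)$ this reduces the inequality to an algebraic constraint on $\delta$ in terms of $\mu$, $A_s$, $M$, and $m_1,\ldots,m_s$; choosing $\tilde\delta_\mu$ below the resulting threshold produces the desired contradiction uniformly in $p\in U$.

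For the strict inequality \eqref{E:kslessJs}, the plan is to analyze $F_s^p$ and $C_{s,\delta}^p(\mu,\cdot)$ in a left neighborhood of $\tau_s$. By the definition of $\tau_s$ as an infimum, $F_s^p(w)<C_{s,\delta}^p(\mu,w)$ strictly for $w$ slightly less than $\tau_s$, with equality at $\tau_s$. On the piece of $F_s^p$ containing $\tau_s$ from the left, supplied by Lemma~\ref{L:Mono}, one has $F_s^p(w)=F_s^p(\tau_s)(w/\tau_s)^{J_s}$, while the $K_s$-term of $C_{s,\delta}^p(\mu,\cdot)$ contributes $F_s^p(\tau_s)(w/\tau_s)^{k_s^s}$. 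Continuity of $C_{s,\delta}^p(\mu,\cdot)$ together with the minimality property defining $K_s$ ensures that no $\alpha$-term with $\alpha_s<k_s^s$ can be the active term of $C_{s,\delta}^p(\mu,\cdot)$ on the left, since its value at $\tau_s$ would then have to equal $F_s^p(\tau_s)$ by continuity, contradicting the minimality of $k_s^s$; and terms with $\alpha_s>k_s^s$ fall strictly below the $K_s$-term for $w<\tau_s$ because $(w/\tau_s)^{\alpha_s}<(w/\tau_s)^{k_s^s}$. Provided one is in the generic case $F_s^p(\tau_s)>\delta^{1/2}$, so that the $K_s$-term alone dominates $C_{s,\delta}^p(\mu,w)$ just to the left of $\tau_s$, the strict inequality $F_s^p(w)<C_{s,\delta}^p(\mu,w)$ becomes $(w/\tau_s)^{J_s}<(w/\tau_s)^{k_s^s}$, which for $w/\tau_s<1$ forces $J_s>k_s^s$.

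The main obstacle, and the reason $\delta_\mu$ must be shrunk further to some $\tilde\delta_\mu$, lies in the degenerate case $F_s^p(\tau_s)=\delta^{1/2}$, where the constant $\delta^{1/2}$ rather than the $K_s$-term may dominate $C_{s,\delta}^p(\mu,w)$ on the left of $\tau_s$, and the exponent comparison above no longer applies directly. My plan here is to combine the identity $|b_{s,J_s}(p)|=\mu|c_{s,K_s}(p)|\tau_1^{k_1^s}\cdots\tau_{s-1}^{k_{s-1}^s}\tau_s^{k_s^s-J_s}$ obtained from the defining equation of $K_s$, the lower bound $|b_{s,J_s}(p)|\geq\tau_s^{m_s-J_s}$ coming from $F_s^p(\tau_s)\geq|b_{s,m_s}(p)|\tau_s^{m_s}\geq\tau_s^{m_s}$, together with the upper bounds on the $\tau_i$ and the already-established bound \eqref{E:MixedFinite}, to derive an algebraic inequality in $\delta$ that must fail once $\delta\leq\tilde\delta_\mu$ is small enough. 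The uniformity in $p$ is delicate because $|b_{s,J_s}(p)|$ admits no positive lower bound uniform in $p$; pinning $|b_{s,J_s}(p)|$ directly to $\tau_s^{m_s-J_s}$, which is controlled uniformly by Propositions~\ref{P:welldefined} and \ref{P:Lowertaus}, supplies the necessary substitute.
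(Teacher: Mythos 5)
Your handling of \eqref{E:MixedFinite} takes a genuinely different route from the paper's. The paper first proves Lemma~\ref{L:tauratio}, the rescaling inequality $\tau_i(p,\mu,a\delta)\leq a^{1/(2m_1\cdots m_i)}\tau_i(p,\mu,\delta)$, and applies it with $\delta=a\delta'_\mu$; the factor $\prod_{i\leq s}\tau_i(p,\mu,\delta'_\mu)^{\alpha_i}<1$ then absorbs arbitrarily large multi-indices for free, so only the infimum $A_s>1$ from \eqref{E:Asindex} enters the final threshold. You instead substitute the bare bound $\tau_i\leq M_{\mu,i}\delta^{1/(2m_1\cdots m_i)}$, which leaves the factor $\prod M_{\mu,i}^{k_i^s}$ unbounded in $|K_s|$, and compensate with a Cauchy decay $|c_{s,\alpha}(p)|\lesssim(2/3)^{|\alpha|}$ (take the Cauchy radius strictly below $3/2$ to make this airtight), closing with the arithmetic facts $N(K_s)-1\geq 1/(m_1\cdots m_s)$ and $|K_s|\leq m_1\cdots m_s\,N(K_s)$. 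This does produce a $\tilde\delta_\mu$ depending only on $\mu$ and $m_1,\dots,m_n$ and is a valid alternative; the paper's tau-ratio lemma is arguably the more economical tool, since it avoids needing anything stronger than $|c_{s,\alpha}|\leq C_s$.

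For \eqref{E:kslessJs} your generic-case reasoning coincides with the paper's: the paper assumes $k_s^s\geq J_s$ and derives $F_s^p\geq C_{s,\delta}^p$ to the left of $\tau_s$, while you read the same exponent comparison forward from the strict inequality $F_s^p<C_{s,\delta}^p$ there; same computation. You are also right that the degenerate case $F_s^p(\tau_s)=\delta^{1/2}$ with $k_s^s\geq 1$ is a real subtlety: there the $\delta^{1/2}$ floor, not the $K_s$-monomial, gives $C_{s,\delta}^p(\mu,\cdot)$ just to the left of $\tau_s$, and the paper's assertion that $C_{s,\delta}^p(\mu,|w|)$ equals the $K_s$-monomial on $[\gamma_2,\tau_s]$ is then false, so the paper's own proof quietly assumes the generic case. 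But the repair you sketch does not go through. Combining $|b_{s,J_s}(p)|\geq\tau_s^{m_s-J_s}$ with the mixed-type identity gives $\tau_s^{m_s-k_s^s}\leq\mu C_s\prod_{i<s}\tau_i^{k_i^s}$, and to contradict this for small $\delta$ you would need the $\delta$-exponent of a \emph{lower} bound for the left side to be strictly smaller than that of an \emph{upper} bound for the right side; the best these inputs give is $(m_s-k_s^s)/2$ against $\sum_{i<s}k_i^s/(2m_1\cdots m_i)\leq 1/2$, and when $m_s-k_s^s\geq 1$ (e.g.\ $J_s=k_s^s=1<m_s$) the inequality points the wrong way. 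So no choice of $\tilde\delta_\mu$ emerges from these bounds alone, and the degenerate case remains open in your plan; a finer input than the uniform upper and lower bounds on the $\tau_i$ is needed.
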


\begin{proof} [Proof of \eqref{E:MixedFinite}]
Fix any $\mu$ with $\mu>1$. Suppose that $\alpha \in \mathcal M_s$ satisfies 
\begin{equation}\label{E:power12}
\frac{\alpha_1}{m_1}+\frac{\alpha_2}{m_1m_2} + \dots + \frac{\alpha_s}{m_1 \dots m_s}
>1.
\end{equation}
We want to find $\tilde\delta_{\mu}$ so that for any $\delta$ with $0<\delta\leq \tilde\delta_{\mu}$,
\begin{equation}\label{E:toshow}
 \left|\mu \cdot c_{s,\alpha}(p) \prod_{1\leq i \leq s}[\tau_i(p,\mu,\delta)]^{\alpha_i}
\right| <
 \delta^{\frac{1}{2}}.
\end{equation}
Combining Definition \ref{D:mixedtype} and \eqref{E:toshow}, we conclude that
any $\alpha \in \mathcal M_s$ with \eqref{E:power12} cannot be a mixed type at $p$ with
respect to $\mu$ and $\delta$.

Let $\delta=a\delta'_{\mu}$ with $0<a <1$,
where $\delta'_{\mu}$ is in \eqref{E:deltaprimemu}. Then we obtain that
\begin{align}
 \left|\mu \cdot c_{s,\alpha}(p) \prod_{1\leq i \leq s}[\tau_i(p,\mu,\delta)]^{\alpha_i} \right|
 & \leq \mu C_s \prod_{1\leq i\leq s} [\tau_i(p,\mu,a\delta'_{\mu})]^{\alpha_i} \notag\\
 & \leq \mu C_s \prod_{1\leq i \leq s} \left\{ a^{\frac{\alpha_i}{2m_1\cdots m_i}}
 [\tau_i(p,\mu,\delta)]^{\alpha_i} \right\} \notag\\
 & \leq \mu C_s a^{\frac{\alpha_1}{2m_1}+ \dots + \frac{\alpha_s}{2m_1\cdots m_s}}\label{E:mixedmucs}
\end{align}
In fact, the first inequality is obtained by  $|c_{s,\alpha}(p)| \leq C_s$. Lemma \ref{L:tauratio} implies the
second inequality. The third inequality follows from $\tau_i(p,\mu,\delta'_{\mu})<1$ in Proposition
\ref{P:welldefined}.

We now show that if $a$ is
sufficiently small, then
the last term in \eqref{E:mixedmucs} is less than $\delta^{\frac{1}{2}}$.
For each $s=1,\dots,n$,  let us denote
\begin{equation}\label{E:Asindex}
A_s=\inf\left\{\sum_{i=1}^s \frac{\alpha_i}{m_1\cdots m_i} \mid \alpha \in \mathcal M_s, \; \sum_{i=1}^s
\frac{\alpha_i}{m_1\cdots m_i} >1 \right\}.
\end{equation}
Since the set $\{\alpha \in \mathcal M_s : 1 < \sum_{i=1}^s \frac{\alpha_i}{m_1\cdots m_i}<2 \}$ is finite, it
follows that
\begin{equation}\label{E:Asbigger1}
A_s>1, \quad s=1,\dots,n.
\end{equation}
Let
\begin{equation}\label{E:deltadoublemus}
\Delta'_{\mu,s}= \left( \frac{\delta'_{\mu}}{(2\mu C_s)^2} \right)^{\frac{1}{A_s-1}} \delta'_{\mu}, \quad 1\leq s
\leq n,
\end{equation}
where $\delta'_{\mu}$ is constructed in \eqref{E:deltaprimemu} and $C_s\geq 1$  in \eqref{E:coeffi}. We now
define $\tilde \delta_{\mu}$ by
\begin{equation}\label{E:tildeltamu}
\tilde \delta_{\mu}= \min \{ \Delta'_{\mu,s} : 1\leq s \leq n\}.
\end{equation}
Clearly, $\tilde \delta_{\mu}$ depends only on $\mu$, and $m_1,\dots,m_n$, and  satisfies $0<\tilde \delta_{\mu}
\leq \delta'_{\mu}$.
If $0<\delta=a\delta'_{\mu}\leq \tilde\delta_{\mu}$, then since $0<a<1$ and
$\frac{\alpha_1}{m_1} + \dots + \frac{\alpha_s}{m_1\cdots m_s}\geq A_s
>1$,
the last term in \eqref{E:mixedmucs} satisfies
\begin{equation}
\mu C_s a^{\frac{\alpha_1}{2m_1}+ \dots + \frac{\alpha_s}{2m_1\cdots m_s}} \leq \mu C_s a^{\frac{A_s}{2}}.
\end{equation}
Hence, \eqref{E:toshow} results from \eqref{E:mixedmucs} and the following lemma.
\end{proof}
\begin{lemma}
If $0<\delta=a\delta'_{\mu}\leq \tilde\delta_{\mu}$ , then
\begin{equation}\label{E:aAs}
\mu C_s a^{\frac{A_s}{2}} <\delta^{\frac{1}{2}}.
\end{equation}
\end{lemma}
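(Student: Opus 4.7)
The plan is pure algebra: the definition of $\Delta'_{\mu,s}$ in \eqref{E:deltadoublemus} was designed precisely so that the desired inequality drops out, so all I need to do is unwind that definition.

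First I would write $\delta^{1/2} = a^{1/2} (\delta'_{\mu})^{1/2}$ and observe that \eqref{E:aAs} is equivalent to
\[
\mu C_s\, a^{(A_s-1)/2} < (\delta'_{\mu})^{1/2}.
\]
Since $A_s > 1$ by \eqref{E:Asbigger1}, the exponent $(A_s-1)/2$ is strictly positive, so raising to this power preserves inequalities between numbers in $(0,1)$ — this is the only place the hypothesis $A_s > 1$ is used.

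Next I would exploit $\delta = a\delta'_{\mu} \le \tilde\delta_{\mu} \le \Delta'_{\mu,s}$ to extract a bound on $a$: dividing through by $\delta'_{\mu}$ and invoking \eqref{E:deltadoublemus} gives
\[
a \;\le\; \frac{\Delta'_{\mu,s}}{\delta'_{\mu}} \;=\; \left(\frac{\delta'_{\mu}}{(2\mu C_s)^2}\right)^{\!1/(A_s-1)}.
\]
Raising both sides to the power $(A_s-1)/2$ (valid since $A_s > 1$) yields
\[
a^{(A_s-1)/2} \;\le\; \frac{(\delta'_{\mu})^{1/2}}{2\mu C_s}.
\]

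Finally, I would combine by multiplying by $\mu C_s a^{1/2}$:
\[
\mu C_s\, a^{A_s/2} \;=\; \mu C_s\, a^{1/2}\, a^{(A_s-1)/2} \;\le\; \mu C_s \, a^{1/2}\cdot\frac{(\delta'_{\mu})^{1/2}}{2\mu C_s} \;=\; \tfrac{1}{2}\, a^{1/2}(\delta'_{\mu})^{1/2} \;=\; \tfrac{1}{2}\,\delta^{1/2} \;<\; \delta^{1/2},
\]
which is \eqref{E:aAs}. There is no real obstacle here — the factor of $2$ inside $(2\mu C_s)^2$ in the definition of $\Delta'_{\mu,s}$ is exactly what gives the strict inequality at the end, and the condition $A_s > 1$ (established in \eqref{E:Asbigger1} from the finiteness of the relevant subset of $\mathcal{M}_s$) is what makes the rearrangement of exponents legal.
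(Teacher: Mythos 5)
Your proof is correct and follows essentially the same route as the paper's: both extract the bound $a \le \left(\frac{\delta'_{\mu}}{(2\mu C_s)^2}\right)^{1/(A_s-1)}$ from the hypothesis $\delta \le \Delta'_{\mu,s}$ and then rearrange exponents, with the factor of $2$ in $(2\mu C_s)^2$ producing the final strict inequality. The only difference is cosmetic — you take the square root before multiplying by the extra factor of $a$, while the paper does it afterward.
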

\begin{proof}
Since $0<\delta=a\delta'_{\mu}\leq \tilde
\delta_{\mu}\leq \Delta'_{\mu,s}$ by \eqref{E:tildeltamu},
it follows from \eqref{E:deltadoublemus} that
\begin{equation}\label{E:arange}
0 <a \leq \left(\frac{\delta'_{\mu}}{(2\mu C_s)^2} \right)^{\frac{1}{A_s-1}}.
\end{equation}
Since $A_s>1$, \eqref{E:arange} implies that
\begin{equation}\label{E:aineqdelta}
a^{A_s-1} \leq \frac{\delta'_{\mu}}{(2\mu C_s)^2}\quad \Longleftrightarrow \quad a^{A_s} \leq
\frac{a\delta'_{\mu}}{(2\mu C_s)^2}.
\end{equation}
Since $\delta=a\delta'_{\mu}$ , it follows \eqref{E:aineqdelta} that \eqref{E:aAs} holds. In fact, we have
\begin{equation*}
 \mu C_s a^{\frac{A_s}{2}} \leq
\frac{(a\delta'_{\mu})^{\frac{1}{2}}}{2} =\frac{\delta^{\frac{1}{2}}}{2} <\delta^{\frac{1}{2}}.
\end{equation*}
\end{proof}

\begin{proof}[Proof of \eqref{E:kslessJs}]
We now  show \eqref{E:kslessJs}. For simplicity we will write $k_i=k_i^s$, $1\leq i \leq s$, and
$\tau_s=\tau_s(p,\mu,\delta)$ in the remaining of the proof. Suppose that $k_s \geq J_s$. We obtain a
contradiction to \eqref{E:Deftaus} by showing that there exists $w_1$ such that
\begin{equation}\label{E:contraw1}
0 \ne |w_1| \lneqq  \tau_s \quad \mbox{and} \quad F_s^p(|w_1|) \geq C_{s,\delta}^p(\mu,|w_1|).
\end{equation}
Since $F_s^p(\tau_s)=|b_{s,J_s}\tau_s^{J_s}|$ and since we choose $J_s$ as the minimum one in Definition
\ref{D:DominantType}, it follows from \eqref{E:jkminimum} that there exists $\gamma_1$ with $0<\gamma_1 <\tau_s$
so that
\begin{equation*}
F_s^p(|w|)=\left|b_{s,J_s} w^{J_s}\right|, \quad \gamma_1 \leq |w| \leq \tau_s.
\end{equation*}
Since $C_{s,\delta}^p(\mu,\tau_s)=\mu|c_{s,K}(p) \tau_1^{k_1} \dots \tau_s^{k_s}|$ and since we choose $k_s$ as
the minimum one in Definition \ref{D:mixedtype}, by the same way we obtain $\gamma_2$ with $0<\gamma_2 <\tau_s$
so that
\begin{equation*}
c_{s,\delta}^p(\mu,|w|)=\mu \left|c_{s,K}(p) \tau_1^{k_1} \dots \tau_{s-1}^{k_{s-1}} w^{k_s} \right|, \quad
\gamma_2 \leq |w| \leq \tau_s.
\end{equation*}
Set $\gamma=\max\{\gamma_1,\gamma_2\}$. Note that $F_s^p$ and $C_{s,\delta}^p$ are monomials in $\gamma \leq |w|
\leq \tau_s$ with degree $J_s$ and $k_s$, respectively. If $J_s < k_s$, then since
$F_s^p(\tau_s)=C_{s,\delta}^p(\tau_s)$, it follows  that
\begin{equation*}
F_s^p(|w|)>C_{s,\delta}^p(\mu,|w|), \quad \mbox{for all} \quad \gamma \leq |w| <\tau_s.
\end{equation*}
If $J_s=k_s$, then since $F_s^p(|w|)$ and $C_{s,\delta}^p(|w|)$ are monomials in $\gamma\leq |w| \leq \tau_s$
with same degree and since $F_s^p(\tau_s)=C_{s,\delta}^p(\tau_s)$, it follows that
\begin{equation*}
F_s^p(|w|)=C_{s,\delta}^p(\mu,|w|), \quad \mbox{for all} \quad \gamma \leq |w| \leq \tau_s.
\end{equation*}
Hence, any $w_1$ with $ \gamma < |w_1| <\tau_s$ satisfies \eqref{E:contraw1}, which completes the proof.
\end{proof}

\section{Stability  of Approximate Systems}\label{S:uniform}
In this section we shall compare the sizes of the approximate systems at two distinct points with respect to
fixed $\mu>1$ and $\delta$ with $0<\delta\leq \tilde \delta_{\mu}$. We shall show that if $p$ and $p'$ is close
enough and have the same dominant and mixed types with respect to $\mu$ and $\delta$, then $\tau_s(p,\mu,\delta)$
and $\tau_s(p',\mu,\delta)$ are equal up to uniform constants. We will see that this stability property plays a
crucial role when we apply the covering argument to construct plurisubharmonic functions near the boundary in
section \ref{S:instrips}.

Let $p \in U$, $\mu>1$ and $0<\delta \leq \tilde \delta_{\mu}$, and let
$J_s$ and $K_s$ be the $s$-th dominant and mixed types at $p$ with respect to $\mu$ and $\delta$.
Let us denote
\begin{equation}\label{E:sigmaspdelta}
\sigma_s(p,\mu,\delta) =F_s^p\left( \tau_s(p,\mu,\delta) \right) =
C_{s,\delta}^p\left(\mu,\tau_s(p,\mu,\delta)\right), \quad 1 \leq s \leq n.
\end{equation}
It follows from \eqref{E:Deftaus} that 
\begin{align}
\sigma_s(p,\mu,\delta) &= |b_{s,J_s}(p)|\left[\tau_s(p,\mu,\delta)\right]^{J_s} \label{E:bsJssigma}\\
 &=\begin{cases}
 \mu |c_{s,K_s}(p)| \prod_{i=1}^s [\tau_i(p,\mu,\delta)]^{k_i^s}  &\quad \mbox{if $K_s\ne (0,\dots,0)$}
 \label{E:csKssigma}\\
 \delta^{\frac{1}{2}} &  \quad \mbox{if $K_s=(0,\dots,0)$}
 \end{cases},
\end{align}
and
\begin{equation}\label{E:sigmadelta}
[\sigma_s(p,\mu,\delta)]^2\geq \delta, \quad s=1,\dots,n.
\end{equation}
Furthermore, the coefficients of $f_s^p$ satisfy
\begin{equation}\label{E:bjest}
|b_{s,j}(p)| \leq \sigma_s (p,\mu,\delta) \left[\tau_s(p,\mu,\delta)\right]^{-j}, \quad j\geq 1,
\end{equation}
and
\begin{equation}\label{E:csalphaest}
|c_{s,\alpha}(p)| \leq \frac{1}{\mu} \sigma_s(p,\mu,\delta) \prod_{1\leq i \leq s}
\left[\tau_i(p,\mu,\delta)\right]^{-\alpha_i}, \quad \alpha \in \mathcal M_s
\end{equation}
Let $d_1,\dots,d_n$ be positive constants and let denote 
\begin{equation*}
R_{\mu,\delta}(p:d_1,\dots,d_n)= \{z \in \mathbb C^n : |z_i-p_i|\leq d_i \tau_i(p,\mu,\delta), \;\; 1\leq i \leq
n \}.
\end{equation*}
\begin{lemma}\label{L:CompareCoeff}
There exists a  constant $d$ with $0 < d <\frac{1}{2}$, depending only on $n$ and $m_1,\dots,m_n$, which
satisfies the following property: Let $p \in U$, $\mu>1$,  and $0<\delta \leq \tilde \delta_{\mu}$. Let $J_s$ and
$K_s=(k_1^s,\dots,k_n^s)$ denote the $s$-th dominant and mixed types at $p$ with respect to $\mu$ and $\delta$.
If $p' \in R_{\mu,\delta}(p:d,\dots,d)$, then
\begin{align}
|b_{s,J_s}(p')-b_{s,J_s}(p)| &\leq \frac{1}{4} |b_{s,J_s}(p)|, & 1\leq s \leq n  \label{E:comparebsJs}\\
|c_{s,K_s}(p')-c_{s,K_s}(p)| &\leq \frac{1}{4} |c_{s,K_s}(p)|, & 2 \leq s \leq n \label{E:comparecsKs}
\end{align}
\end{lemma}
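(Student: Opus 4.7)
The approach is to expand $b_{s,J_s}(p')$ and $c_{s,K_s}(p')$ as convergent power series in the displacement $p'-p$ whose coefficients are Taylor data of $f_s$ at $p$, then bound each sum term-by-term using \eqref{E:bjest}--\eqref{E:csalphaest}. Everything collapses via the generating-function identity $\sum_{k\ge 0}\binom{k+m}{m}d^k=(1-d)^{-(m+1)}$, and the a priori bounds $J_s\le m_s$ (Proposition \ref{P:FinitePure}) and $k_i^s\le m_1\cdots m_i$ (Proposition \ref{P:FiniteMixed}) make the resulting expressions depend only on $n$ and $m_1,\dots,m_n$.

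For the dominant coefficient, comparing $f_s^{p'}(v)=\sum_j b_{s,j}(p')v_s^j+\sum_{\beta\in\mathcal M_s}c_{s,\beta}(p')v^\beta$ with the expansion at $p$ evaluated at $u=(p'-p)+v$, and extracting the coefficient of $v_s^{J_s}$, yields
\[
b_{s,J_s}(p')=\sum_{j\ge J_s}\binom{j}{J_s}b_{s,j}(p)(p'_s-p_s)^{j-J_s}+\sum_{\substack{\beta\in\mathcal M_s\\ \beta_s\ge J_s}}\binom{\beta_s}{J_s}c_{s,\beta}(p)(p'_s-p_s)^{\beta_s-J_s}\prod_{i<s}(p'_i-p_i)^{\beta_i}.
\]
The $j=J_s$ summand is precisely $b_{s,J_s}(p)$. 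Subtracting, applying \eqref{E:bjest}--\eqref{E:csalphaest} with $|p'_i-p_i|\le d\,\tau_i(p,\mu,\delta)$, and dividing by $|b_{s,J_s}(p)|=\sigma_s\tau_s^{-J_s}$ from \eqref{E:bsJssigma}, the $\tau_i$ factors cancel completely to give
\[
\frac{|b_{s,J_s}(p')-b_{s,J_s}(p)|}{|b_{s,J_s}(p)|}\le\bigl[(1-d)^{-(J_s+1)}-1\bigr]+\tfrac{1}{\mu}(1-d)^{-(J_s+1)}\bigl[(1-d)^{-(s-1)}-1\bigr]\le(1-d)^{-(J_s+s)}-1,
\]
which, since $J_s\le m_s$, is uniformly bounded in terms of $n$ and $m_1,\dots,m_n$.

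For the mixed coefficient, the analogous shift-and-extract on the coefficient of $v^{K_s}$ receives no contribution from the purely-$v_s$ sum $\sum_j b_{s,j}(p)(\cdot)^j$, since $K_s\in\mathcal M_s$ has $k_i^s>0$ for some $i<s$, so
\[
c_{s,K_s}(p')=\sum_{\beta\ge K_s,\,\beta\in\mathcal M_s}c_{s,\beta}(p)\prod_{i=1}^s\binom{\beta_i}{k_i^s}(p'_i-p_i)^{\beta_i-k_i^s}.
\]
Estimating with \eqref{E:csalphaest} and $|c_{s,K_s}(p)|=\mu^{-1}\sigma_s\prod_i\tau_i^{-k_i^s}$ from \eqref{E:csKssigma} yields the clean bound
\[
\frac{|c_{s,K_s}(p')-c_{s,K_s}(p)|}{|c_{s,K_s}(p)|}\le\prod_{i=1}^s(1-d)^{-(k_i^s+1)}-1=(1-d)^{-(|K_s|+s)}-1,
\]
which by Proposition \ref{P:FiniteMixed} is again controlled only by $n$ and $m_1,\dots,m_n$.

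Finally, I choose $d\in(0,\tfrac12)$ small enough that both displayed bounds are at most $\tfrac14$. The main obstacle is simply the multi-index bookkeeping in identifying which $b_{s,j}$ and $c_{s,\beta}$ contribute to each extracted coefficient; the analytic content is elementary, and the factor $\mu^{-1}$ in \eqref{E:csalphaest} only improves the dominant-coefficient estimate (since $\mu>1$), so the resulting constant $d$ is in fact uniform in $\mu>1$ as well.
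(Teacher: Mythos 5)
Your proposal follows the same approach as the paper: express $b_{s,J_s}(p')$ and $c_{s,K_s}(p')$ as power series in $u=p'-p$ via Taylor's theorem, bound each term using \eqref{E:bjest} and \eqref{E:csalphaest} together with $|u_i|\le d\,\tau_i$, and sum the resulting geometric series. Your bookkeeping is in fact slightly more careful than the paper's: in \eqref{E:bsJsprime} the paper restricts the $c_{s,\alpha}$-sum to $\alpha_s>J_s$, whereas the correct index set (which you use) is $\alpha_s\ge J_s$ together with $\sum_{i<s}\alpha_i\ge 1$; similarly \eqref{E:diffcspprime} should run over all $\alpha\ge K_s$, $\alpha\ne K_s$, not $\alpha_i>k_i^s$ for all $i$. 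These slips do not change the order of magnitude of the final bound, and your consolidated estimates $(1-d)^{-(J_s+s)}-1$ and $(1-d)^{-(|K_s|+s)}-1$ are correct and lead to the same choice of a uniform $d$ depending only on $n$ and $m_1,\dots,m_n$, with the added (correct) observation that $\mu>1$ only helps.
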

\begin{proof}
Let $p'=p+u$ and let $D_s^j$ denote the partial derivatives $\frac{\partial^j}{\partial u_s^j}$. We at first show
\eqref{E:comparebsJs} when $s=1$. By Taylor's theorem we obtain that 
\begin{align}
\left|b_{1,J_1}(p')-b_{1,J_1}(p) \right| &= \frac{1}{J_1 !} \left|D_1^{J_1} f_1^p(u) - D_1^{J_1}f_1^p(0)
\right| \notag\\
& =\left| \sum_{j>J_1} b_{1,j}(p) {j \choose J_1} u_1^{j-J_1} \right| \label{E:taylorb1}
\end{align}
Here ${j \choose J_1}$ refers the binomial coefficient.
Set $d_1=(m_1+1)^{-1}2^{-(m_1+3)}$. If  $|u_1| \leq d_1\tau_1(p,\mu,\delta)$, then
\begin{align}
\left|b_{1,J_1}(p')-b_{1,J_1}(p) \right| & \leq \sum_{j>J_1} \sigma_1(p,\mu,\delta)
\left[\tau_1(p,\mu,\delta)\right]^{-j} {j \choose J_1}
[d_1\tau_1(p,\mu,\delta)]^{j-J_1} \notag \\
& = \sigma_1(p,\mu,\delta) [\tau_1(p,\mu,\delta)]^{-J_1}
\left(\sum_{j>J_1} {j \choose J_1} d_1^{j-J_1}\right) \notag \\
& = |b_{1,J_1}(p)| \left( \left(\frac{1}{1-d_1}\right)^{J_1+1}-1 \right). \label{E:b1J1prime}
\end{align}
In fact, we apply \eqref{E:bjest} to \eqref{E:taylorb1} to get the first inequality, and the third equality is
the result of \eqref{E:bsJssigma}. Note that since $J_1 \leq m_1$ by Proposition \ref{P:FinitePure} and since
$0<d_1 <\frac{1}{2}$, it follows that
\begin{equation}\label{E:powerprime}
\left(\frac{1}{1-d_1}\right)^{J_1+1}-1 =d_1\frac{\sum_{i=0}^{J_1}(1-d_1)^i}{(1-d_1)^{J_1+1}} \leq
(m_1+1)2^{m_1+1}d_1,
\end{equation}
Combine \eqref{E:b1J1prime} and \eqref{E:powerprime}  to obtain \eqref{E:comparebsJs}  for $s=1$.

To estimate $b_{s,J_s}(p')$, $s\geq 2$, consider 
\begin{align}
b_{s,J_s}(p')-b_{s,J_s}&(p)=   \sum_{j>J_s} b_{s,j}(p)
{j \choose J_s}u_s^{j-J_s} \notag\\
&-\sum_{\alpha \in \mathcal M_s \atop \alpha_s >J_s }c_{s,\alpha}(p)
u_1^{\alpha_1} \dots
 u_{s-1}^{\alpha_{s-1}} {\alpha_s \choose J_s} u_s^{\alpha_s-J_s}.
 \label{E:bsJsprime}
\end{align}
Let $A$ and $B$
denote the first and the second term in \eqref{E:bsJsprime}, respectively.
For each $s$ with $2\leq s \leq n$, set
\begin{equation}\label{E:esvalue}
d_s=\min \left \{ (m_s+1)^{-1}2^{-(m_s+s+3)} , (m_1\dots m_s+1)^{-1}2^{-(m_1\cdots m_s+3)} \right\}.
\end{equation}
If $|u_i| \leq d_i\tau_i(p,\mu,\delta)$, $1\leq i\leq s$, then by the same process used in \eqref{E:taylorb1},
\eqref{E:b1J1prime}, and \eqref{E:powerprime}, we have 
\begin{equation}\label{E:Aest}
|A| \leq |b_{s,J_s}(p)|\left\{ \left(\frac{1}{1-d_s}\right)^{J_s+1}-1\right\} \leq|b_{s,J_s}(p)| (m_s+1)2^{m_s+1}
d_s .
\end{equation}
To estimate $B$ we combine \eqref{E:csKssigma} and \eqref{E:csalphaest} with $|u_i|\leq d_i\tau_i(p,\mu,\delta)$,
$1\leq i \leq  s$, and then use a similar method used in the previous work to get 
\begin{align}
|B|&\leq |b_{s,J_s}(p)| \frac{1}{\mu}  \left( \prod_{i=1}^{s-1}\frac{1}{1-d_i} \right) \left\{
\left(\frac{1}{1-d_s} \right)^{J_s+1}-1 \right\} \notag \\
& \leq  |b_{s,J_s}(p)|(m_s+1)2^{m_s+s} d_s \label{E:Best}
\end{align}
In fact, the second line results from $\mu >1$, $0<d_i <\frac{1}{2}$, and  $1 \leq J_s \leq m_s$. Combining
\eqref{E:Aest} and \eqref{E:Best}, we obtain that
\begin{equation}\label{E:ABest}
|A|+|B| \leq |b_{s,J_s}(p)|(m_s+1) 2^{ m_s+s+1}  d_s.
\end{equation}
Hence,   it follows from \eqref{E:bsJsprime}, \eqref{E:esvalue}, and \eqref{E:ABest} that if $|u_i|\leq d_i
\tau_i(p,\mu,\delta)$, $1\leq i \leq s$, then
\begin{equation*}
|b_{s,J_s}(p') - b_{s,J_s}(p) |\leq  \frac{1}{4} |b_{s,J_s}(p)|.
\end{equation*}

 Now consider the case
 when $K_s \ne (0,\dots,0)$. It follows from Taylor's theorem that
\begin{equation}\label{E:diffcspprime}
|c_{s,K_s}(p')-c_{s,K_s}(p)|= \left| \sum_{\alpha \in \mathcal M_s \atop
\alpha_i>k_i^s} c_{s,\alpha} \prod_{i=1}^s
{\alpha_i \choose k_i^s} u_i^{\alpha_i-k_i^s} \right|,
\end{equation}
where $k_1+ \dots + k_s \geq 1$. We apply the similar process to in the previous ones to obtain that
\begin{align}
|c_{s,K_s}(p')-c_{s,K_s}(p)| &\leq \frac{\sigma_s(p,\mu,\delta)}{\mu} \prod_{1\leq i\leq s} \tau_i(p,\mu,\delta)^{-k_i^s}
\prod_{i=1}^s \left\{ \left( \frac{1}{1-d_i}\right)^{k_i^s+1}-1 \right\} \notag \\
& \leq |c_{s,K_s}(p)|\prod_{1\leq i \leq s} \left\{ \left( \frac{1}{1-d_i}\right)^{k_i^s+1}-1 \right\}. \notag\\
& \leq |c_{s,K_s}(p)|\prod_{1\leq i \leq s} \left\{ (m_1\cdots m_i+1)2^{m_1\cdots m_i +1}d_i \right\}
\label{E:csKsprimecs}
\end{align}
In fact, since $|u_i|\leq d_i \tau_i(p,\mu,\delta)$, $1\leq i \leq s$, we obtain the first inequality from
\eqref{E:csalphaest} and \eqref{E:diffcspprime}. The second inequality results from \eqref{E:csKssigma}. The
third inequality is obtained by $0 \leq k_i^s \leq m_1\dots m_i$,  $1 \leq i \leq s$ in Proposition
\ref{P:FiniteMixed}.

Therefore, it follows from \eqref{E:esvalue} and \eqref{E:csKsprimecs} that if  $|u_i|\leq d_i
\tau_i(p,\mu,\delta)$, $1\leq i \leq s$, then
\begin{equation*}
|c_{s,K_s}(p')| \leq \frac{1}{4} |c_{s,K_s}(p)|.
\end{equation*}
Let $d=\min \{d_s: 1 \leq s \leq n \}$, then this completes the proof.
\end{proof}
\begin{notation}
Let $a_1,\dots,a_n$ be positive constants, and for each $s$, $1\leq s\leq n$ we define 
\begin{equation}
R_{\mu,\delta}^s(p:a_1,\dots,a_s)= \{z \in \mathbb C^n : |z_i-p_i|\leq a_i \tau_i(p,\mu,\delta),  1\leq i \leq s
\}.
\end{equation}
When $s=n$, we will omit the superscript $n$ so that
\[
R_{\mu,\delta}(p:a_1,\dots,a_n)=R_{\mu,\delta}^n(p:a_1,\dots,a_n),
\]
and we write
\[
R_{\mu,\delta}(p)=R_{\mu,\delta}^n(p: 1,\dots,1).
\]
\end{notation}

\begin{definition}
Let $d$ be the constant in Lemma \ref{L:CompareCoeff}. Let us denote
\begin{equation}\label{E:tildeRdelta}
\tilde R_{\mu,\delta}(p)= R_{\mu,\delta}(p,d,\dots,d)
\end{equation}
and
\begin{equation}
\tilde \tau_s(p,\mu,\delta)=d \tau_s(p,\mu,\delta).
\end{equation}
\end{definition}

Let $A(t)>0$ and $B(t)>0$ be a function on a set $\mathbb T$, $t \in \mathbb T$. We shall use the notation $A(t)
\lesssim B(t)$, if there exists a positive constant $C$, independent of $t$, such that
\begin{equation}
A(t) \leq C B(t), \quad t \in \mathbb T.
\end{equation}
If $A(t) \lesssim B(t)$ and $B(t) \lesssim A(t)$, then we write $A(t) \approx B(t)$. In the following
proposition, the symbol $\approx$  means that $C$ is independent of $p \in U$, $\mu>1$, and $\delta$ with
$0<\delta\leq \tilde \delta_{\mu}$.

\begin{proposition}\label{P:compareinhat}
Let $ p ,p' \in U$, $\mu >1$, and $0<\delta\leq \tilde \delta_{\mu}$. Suppose that $p, p' \in U$ satisfy
\begin{equation}\label{E:interppprime}
\tilde R_{\mu,\delta}(p) \cap \tilde R_{\mu,\delta}(p') \ne \emptyset.
\end{equation}
If $p$ and $p'$ have the same $s$-th dominant and mixed types with respect to $\mu$ and $\delta$ for all $s$ with
$1\leq s \leq n$, then
\begin{equation}\label{E:taucompare}
\tau_s(p,\mu,\delta) \approx \tau_s(p,\mu,\delta), \quad 1 \leq s \leq n.
\end{equation}
\end{proposition}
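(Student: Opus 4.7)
The plan is to prove the conclusion by induction on $s$, using Lemma \ref{L:CompareCoeff} to compare the coefficients $b_{s,J_s}$ and $c_{s,K_s}$ at $p$ and $p'$, and then using the defining identities (\ref{E:bsJssigma})--(\ref{E:csKssigma}) to transfer the resulting comparison to $\tau_s$. First I would unpack the overlap hypothesis $\tilde R_{\mu,\delta}(p) \cap \tilde R_{\mu,\delta}(p') \ne \emptyset$ into the coordinatewise bound
\[
|p_i - p'_i| \leq d\bigl(\tau_i(p,\mu,\delta) + \tau_i(p',\mu,\delta)\bigr), \quad 1 \leq i \leq n,
\]
and note that by the symmetry of the conclusion I may, at each step of the induction, assume without loss of generality that $\tau_s(p',\mu,\delta) \leq \tau_s(p,\mu,\delta)$, which upgrades the $s$-th coordinate bound to $|p_s - p'_s| \leq 2d\,\tau_s(p,\mu,\delta)$.

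For the base case $s=1$, there are no mixed types, so $|b_{1,J_1}(q)|\,\tau_1(q,\mu,\delta)^{J_1} = \delta^{1/2}$ for both $q = p$ and $q = p'$ with common exponent $J_1$. Applying Lemma \ref{L:CompareCoeff} to the leading coefficient gives $|b_{1,J_1}(p)| \approx |b_{1,J_1}(p')|$, and the desired comparison $\tau_1(p,\mu,\delta) \approx \tau_1(p',\mu,\delta)$ follows immediately. For the inductive step I would assume $\tau_i(p,\mu,\delta) \approx \tau_i(p',\mu,\delta)$ for $i < s$; combined with the overlap bound this yields $|p_i - p'_i| \lesssim \tau_i(p,\mu,\delta)$ for all $i \leq s$, which is exactly what Lemma \ref{L:CompareCoeff} needs in order to conclude $|b_{s,J_s}(p)| \approx |b_{s,J_s}(p')|$ and, when $K_s \ne (0,\dots,0)$, also $|c_{s,K_s}(p)| \approx |c_{s,K_s}(p')|$.

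To finish the inductive step I would split on the mixed-type case. If $K_s = (0,\dots,0)$, then $|b_{s,J_s}(q)|\,\tau_s(q,\mu,\delta)^{J_s} = \delta^{1/2}$ at both points, and the coefficient comparison yields $\tau_s(p,\mu,\delta) \approx \tau_s(p',\mu,\delta)$. If $K_s \ne (0,\dots,0)$, equating (\ref{E:bsJssigma}) and (\ref{E:csKssigma}) at each of $p, p'$ produces
\[
\tau_s(q,\mu,\delta)^{J_s - k_s^s} = \frac{\mu\,|c_{s,K_s}(q)|}{|b_{s,J_s}(q)|}\prod_{i<s}\tau_i(q,\mu,\delta)^{k_i^s}, \qquad q \in \{p,p'\},
\]
and Proposition \ref{P:FiniteMixed} guarantees the positive exponent $J_s - k_s^s \geq 1$, so the induction hypothesis on the $\tau_i$ for $i<s$ together with the coefficient comparisons yields $\tau_s(p,\mu,\delta) \approx \tau_s(p',\mu,\delta)$.

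The main obstacle will be the bookkeeping needed to reconcile the symmetric overlap hypothesis with the asymmetric statement of Lemma \ref{L:CompareCoeff}, which requires the containment $p' \in R_{\mu,\delta}(p:d,\dots,d)$ rather than mere overlap. I would address this by combining the symmetrization trick above, which costs a factor of $2$ in each coordinate, with a modestly smaller choice of $d$ in the definition of $\tilde R_{\mu,\delta}$ so that the enlarged radius still keeps each coefficient perturbation below the threshold $\tfrac14$ in Lemma \ref{L:CompareCoeff}. The implied constants in $\approx$ will then depend only on $n$ and $m_1,\dots,m_n$, as required.
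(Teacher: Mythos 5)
Your proposal shares the skeleton of the paper's proof — induction on $s$, Lemma~\ref{L:CompareCoeff} to compare the coefficients $b_{s,J_s}$ and $c_{s,K_s}$, a case split on whether $K_s=(0,\dots,0)$, and the defining identities \eqref{E:bsJssigma}--\eqref{E:csKssigma} to transfer the coefficient comparison to $\tau_s$ — but handles the overlap hypothesis differently, and this is where the paper is cleaner. The paper picks an intermediate point $p''\in\tilde R_{\mu,\delta}(p)\cap\tilde R_{\mu,\delta}(p')$; since $p''$ lies in $R_{\mu,\delta}(p:d,\dots,d)$ \emph{and} in $R_{\mu,\delta}(p':d,\dots,d)$, Lemma~\ref{L:CompareCoeff} applies verbatim to the pairs $(p,p'')$ and $(p',p'')$, and the triangle inequality yields $\tfrac{3}{5}|b_{s,J_s}(p')|\le|b_{s,J_s}(p)|\le\tfrac{5}{3}|b_{s,J_s}(p')|$ (and likewise for $c_{s,K_s}$) for all $s$ at once, with no modification to $d$. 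Your approach tries to compare $p$ and $p'$ directly, which does not satisfy the Lemma's hypothesis; patching this requires both the WLOG symmetrization and a smaller $d$. That is workable, but it entangles the coefficient comparison with the induction (you need $\tau_i(p)\approx\tau_i(p')$ for $i<s$ just to verify that $p'$ lies in an enlarged box for the purpose of comparing the $f_s$-coefficients), and shrinking $d$ silently changes $\tilde R_{\mu,\delta}$, which is used elsewhere in the paper (e.g., \eqref{E:hatincludetilde} and Lemma~\ref{L:finitecover}), so one must check those uses tolerate the shrink. You should also note that Lemma~\ref{L:CompareCoeff}'s hypothesis is a full $n$-coordinate containment while your induction at step $s$ only controls coordinates $1,\dots,s$; this happens to be enough because $f_s$ depends only on $z_1,\dots,z_s$, but that fact needs to be said. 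The intermediate-point trick sidesteps all of these issues and decouples the one-shot coefficient comparison from the induction on $\tau_s$.
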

\begin{proof}
Let $J_s=J_s(p,\mu,\delta)=J_s(p',\mu,\delta)$ and $K_s=K_s(p,\mu,\delta)=K_s(p',\mu,\delta)$. Choose any point
$p''$ so that $p'' \in \tilde R_{\delta}(p) \cap \tilde R_{\delta}(p')$. Lemma \ref{L:CompareCoeff} implies that
\begin{equation*}
|b_{s,J_s}(p'') - b_{s,J_s}(p)| \leq \frac{1}{4}|b_{s,J_s}(p)|, \quad \mbox{and} \quad
|b_{s,J_s}(p'')-b_{s,J_s}(p') | \leq \frac{1}{4}|b_{s,J_s}(p')|.
\end{equation*}
Hence, the triangular inequality gives us
\begin{align*}
||b_{s,J_s}(p)|- |b_{s,J_s}(p')||
&\leq |b_{s,J_s}(p)-b_{s,J_s}(p'')| + |b_{s,J_s}(p'')-b_{s,J_s}(p')| \notag \\
&\leq \frac{1}{4}|b_{s,J_s}(p)| + \frac{1}{4} |b_{s,J_s}(p')|, \notag
\end{align*}
which implies that
\begin{equation}\label{E:bsJsppprime}
\frac{3}{5} |b_{s,J_s}(p')| \leq |b_{s,J_s}(p)| \leq \frac{5}{3}|b_{s,J_s}(p')|, \quad 1\leq s \leq n.
\end{equation}
If $K_s \ne (0,\dots,0)$, then by the same way, we obtain that
\begin{equation}\label{E:approxcsKs}
\frac{3}{5} |c_{s,K_s}(p')| \leq |c_{s,K_s}(p)| \leq \frac{5}{3}|c_{s,K_s}(p')|, \quad 1 \leq s \leq n.
\end{equation}

We first show \eqref{E:taucompare} for $s=1$.  Since $J_1(p,\mu,\delta)=J_1(p',\mu,\delta)=J_1$, and since
$\sigma_1(p,\mu,\delta)=\sigma_1(p',\mu,\delta)=\delta^{\frac{1}{2}}$, it follows from \eqref{E:bsJssigma} that
\begin{equation}\label{E:tau1ppprime}
\tau_1(p,\mu,\delta)=\left(\frac{\delta^{\frac{1}{2}}}{|b_{1,J_1}(p)|} \right)^{\frac{1}{J_1}} \quad \mbox{and}
\quad \tau_1(p',\mu,\delta)=\left(\frac{\delta^{\frac{1}{2}}}{|b_{1,J_1}(p')|} \right)^{\frac{1}{J_1}}.
\end{equation}
Thus from \eqref{E:bsJsppprime} and \eqref{E:tau1ppprime}  we see that 
\begin{equation*}
\left(\frac{3}{5} \right)^{\frac{1}{J_1}} \leq \frac{\tau_1(p',\mu,\delta)}{\tau_1(p,\mu,\delta)} = \left(
\left|\frac{b_{1,J_1}(p)}{b_{1,J_1}(p')} \right| \right)^{\frac{1}{J_1}} \leq
\left(\frac{5}{3}\right)^{\frac{1}{J_1}}.
\end{equation*}
Since $J_1 \leq m_1$, we therefore obtain that $ \tau_1(p,\mu,\delta)\approx \tau_1(p',\mu,\delta)$.

Let $s\geq 2$. We assume inductively that if $p, p' \in U$ satisfy \eqref{E:interppprime} and if
$J_i(p,\mu,\delta)=J_i(p',\mu,\delta)$ and $K_i(p,\mu,\delta)=K_i(p,\mu,\delta)$ for all $i=1,\dots,s-1$, then
\eqref{E:taucompare} holds for $i=1,\dots,s-1$. If $K_s=(0,\dots,0)$, then since \eqref{E:bsJsppprime} holds for
$s$, we apply the same process  used in $s=1$ to obtain that $ \tau_s(p',\mu,\delta)\approx
\tau_s(p,\mu,\delta)$. If $K_s \ne (0,\dots,0)$, then it follows from \eqref{E:csKssigma} that
\begin{align*}
\left(\tau_s(p,\mu,\delta)\right)^{J_s-k_s^s}&=\frac{|c_{s,K_s}(p)|}
{|b_{s,J_s}(p)|}\left(\tau_1(p,\mu,\delta)\right)^{k_1^s} \dots
\left(\tau_{s-1}(p,\mu,\delta)\right)^{k_{s-1}^s}\\
\left(\tau_s(p',\mu,\delta)\right)^{J_s-k_s^s}&=\frac{|c_{s,K_s}(p')|}
{|b_{s,J_s}(p')|}\left(\tau_1(p',\mu,\delta)\right)^{k_1^s} \dots
\left(\tau_{s-1}(p',\mu,\delta)\right)^{k_{s-1}^s}.
\end{align*}
Note that  $0 \leq k_i^s \leq m_1\dots m_i$, $1 \leq i \leq s$ by \eqref{E:MixedFinite} and that $k_s^s <J_s\leq
m_s$ by \eqref{E:kslessJs}. Hence, since we assumed by induction that $\tau_i(p,\mu,\delta) \approx
\tau_i(p',\mu,\delta)$ for $1\leq i \leq s-1$, we combine \eqref{E:bsJsppprime} and \eqref{E:approxcsKs} to
obtain that $\tau_s(p,\mu,\delta) \approx \tau_s(p',\mu,\delta)$. This completes the proof.
\end{proof}
\section{Estimates of derivatives}\label{S:derivative}
In this section we prepare for the construction of local plurisubharmonic functions in section \ref{S:local}.
After shrinking $R_{\mu,\delta}(p)$, we will estimate the partial derivatives of $f_s$ on this small region in
terms of $\tau_s(p,\mu,\delta)$. In this section we shall fix a base point $p \in U$. When there is no confusion,
we simply write $\tau_s$ for each $\tau_s(p,\mu,\delta)$, $s=1,\dots,n$ with $\mu>1$, $0 < \delta \leq \tilde
\delta_{\mu}$. Let $u_i=z_i-p_i$ be the coordinates  centered at $p$. Note that $
\pa{f_s}{z_i}(z)=\pa{f_s^p}{u_i}(u),$ for $z=p+u$, $1\leq i \leq n$, $1\leq s \leq n.$
 Recall that 
\begin{equation*}
f^p_s(u) = \sum_{j\geq 1} b_{s,j} u_s^j + \sum_{\alpha\in \mathcal M_s} c_{s,\alpha} u_1^{\alpha_1} \dots
u_s^{\alpha_s}, \quad 1 \leq s \leq n,
\end{equation*}
where we omit $p$ in $b_{s,j}(p)$ and $c_{s,\alpha}(p)$ for simplicity. 
Recall that 
\begin{align*}
F_s^p(|u_s|)&=\sup_{j \geq 1} |b_{s,j}u_s^j| \\
C_{s,\delta}^p(\mu,|u_s|)&=\sup_{\alpha\in \mathcal M_s}\{\delta^{\frac{1}{2}}, \mu \cdot
|c_{s,\alpha}\tau_1^{\alpha_1} \dots \tau_{s-1}^{\alpha_{s-1}} u_s^{\alpha_s}| \}.
\end{align*}
Let us denote $J_s=J_s(p,\mu,\delta)$ and $K_s=K_s(p,\mu,\delta)$ for $s=1,\dots,n$. By omitting $(p,\mu,\delta)$
we will write \eqref{E:sigmaspdelta} as $ \sigma_s=F_s^p(\tau_s)=C_{s,\delta}^p(\mu,\tau_s)$, $s=1,\dots,n.$
In the following we shall fix a constant $a$ with $0<a <\frac{1}{8}$.
\begin{lemma}\label{L:DomInt}
Assume the same hypothesis in Lemma \ref{L:Mono} and let $x_0>0$. Then there exist an integer $k$, $1 \leq k \leq
N$, and points $x'$, $x''$ with 
\begin{equation*}
a^{(N+1)(2N+1)} x_0 \leq x' <x'' \leq x_0
\end{equation*}
such that $x' = a^{2N+1} x''$
and
$g(x)=g_{j_k}(x)$ for  $x'\leq x \leq x''$.
\end{lemma}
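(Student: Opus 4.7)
My plan is a straightforward pigeonhole argument based on the dominance decomposition already produced by Lemma \ref{L:Mono}. That lemma partitions $(0,\infty)$ into at most $N$ closed intervals $[\bar{x}_{k-1},\bar{x}_k]$ on each of which $g$ coincides with a single monomial $g_{j_k}$; since $j_1 < j_2 < \dots < j_q = N$ forces $q \leq N$, there are at most $N-1$ interior breakpoints $\bar{x}_1 < \dots < \bar{x}_{q-1}$ in $(0,\infty)$. The idea is to cover the window $[a^{(N+1)(2N+1)}x_0,\,x_0]$ by $N+1$ disjoint dyadic-type scales of ratio $a^{-(2N+1)}$ and observe that at least one of them must avoid all breakpoints.

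Concretely, I would set $y_\ell = a^{\ell(2N+1)} x_0$ for $\ell = 0,1,\dots,N+1$, so that $y_0 = x_0$, $y_{N+1} = a^{(N+1)(2N+1)} x_0$, and each consecutive pair satisfies $y_{\ell+1} = a^{2N+1}\, y_\ell$. The open intervals $(y_{\ell+1}, y_\ell)$ for $\ell = 0,\dots,N$ are pairwise disjoint and all lie in $[a^{(N+1)(2N+1)}x_0,\,x_0]$. Since there are at most $N-1$ interior breakpoints and each one can land in at most one of these $N+1$ open intervals, the pigeonhole principle produces an index $\ell$ for which $(y_{\ell+1}, y_\ell)$ contains no breakpoint.

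For that $\ell$, the closed interval $[y_{\ell+1},y_\ell]$ is contained in a single dominance interval $[\bar{x}_{k-1},\bar{x}_k]$: even if one of the endpoints $y_{\ell+1}$ or $y_\ell$ coincides with a breakpoint, no breakpoint lies strictly between them, so $[y_{\ell+1},y_\ell]$ cannot straddle more than one piece of the partition. Hence $g(x) = g_{j_k}(x)$ throughout $[y_{\ell+1},y_\ell]$. Setting $x' = y_{\ell+1}$ and $x'' = y_\ell$ gives $a^{(N+1)(2N+1)} x_0 \leq x' < x'' \leq x_0$ with $x' = a^{2N+1} x''$, which is exactly the conclusion.

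There is no real obstacle here; the content of the lemma is the quantitative counting observation that the number of independent scales of ratio $a^{-(2N+1)}$ that fit inside the window exceeds the maximum possible number of dominance changes, which compels the existence of a pure-monomial subinterval. The only place to exercise a little care is allowing a breakpoint to sit at one of the endpoints $y_\ell$, but as noted above this does not affect the conclusion because dominance is constant on each closed piece $[\bar{x}_{k-1},\bar{x}_k]$.
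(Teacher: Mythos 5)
Your proof is correct. The paper argues iteratively rather than by pigeonhole: it starts at $x_0$, reads off the dominant index $j_0$, passes to $x_1 = a^{2N+1}x_0$, and observes that either the same index dominates at $x_1$ (done) or the index strictly drops. Since the indices live in $\{1,\dots,N\}$ and strictly decrease along the iteration, the process must stabilize within $N$ steps, which lands $x'$ inside the required window. Your argument instead counts breakpoints: Lemma~\ref{L:Mono} produces at most $q-1\leq N-1$ interior breakpoints, you lay down $N+1$ disjoint open subintervals $(y_{\ell+1},y_\ell)$ of ratio $a^{2N+1}$ inside $[a^{(N+1)(2N+1)}x_0,\,x_0]$, and pigeonhole forces at least one of them to be breakpoint-free; the closed interval between its endpoints then sits in a single dominance piece. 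The two arguments are logically equivalent but packaged differently: the paper's iteration is constructive and identifies the topmost such interval, while your pigeonhole version is shorter and makes the counting explicit (you even have slack, since $N-1 < N+1$ guarantees two free intervals when only one is needed). Your handling of the edge case where a $y_\ell$ coincides with a breakpoint is the right observation and matches the fact that the pieces $[\bar{x}_{k-1},\bar{x}_k]$ in Lemma~\ref{L:Mono} are closed.
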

\begin{proof}
By Lemma \ref{L:Mono}, $g(x_0)=g_{j_0}(x_0)$ for some $j_0$. If $x_1=a^{2N+1}x_0$ and $g(x_1)=g_{j_0}(x_1)$, then
we are done. Just set $x'=x_1$ and $x''=x_0$. Otherwise, $g(x_1)=g_{j_1}(x_1)$ where $j_1 <j_0$. Repeat the
process until it terminates so that 
\begin{equation*}
g(x_k)=g_{j_k}(x_k), \quad g(x_{k+1}) =g_{j_k}(x_{k+1}).
\end{equation*}
Hence, $g(x)=g_{j_k}(x)$ for $x_{k+1} \leq x \leq x_k$. Furthermore, since $N \geq j_0
> j_1 > \dots \geq 1$, one  can obtain $k$ with $k \leq N+1$.
\end{proof}
\begin{lemma}\label{L:EstHprime}
Let $h(w)=\sum_{j=1}^{\infty}b_jw^j$ be holomorphic in  $|w|< \frac{3}{2}$. Suppose that $|b_m|\geq 1$ and that
there exists a constant $B$ such that $|b_j|\leq B$, $j=1,2,\dots$. Let $w_0$ be a point with $|w_0| \leq
B^{-1}$. Then there exist $w_1$, $w_2$, and an integer $k$ with $ 1\leq k \leq m$ so that
\begin{align}
&|w_1|=a|w_2|, \quad |w_1| \geq a^{(m+1)(2m+1)} |w_0|, \label{E:w1w2}\\
&F(|w|)|=|b_{k}w^{k}|, \quad |w_1| \leq |w| \leq |w_2|, \notag \\
&|h'(w)| \geq \frac{1}{2} |b_{k} k w^{k-1}|,  \quad|w_1| \leq |w| \leq |w_2|.\label{E:fprime}
\end{align}
\end{lemma}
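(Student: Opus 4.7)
The plan is to pick $|w_1|,|w_2|$ strictly inside a dominance interval supplied by Lemma \ref{L:DomInt} so that, throughout $[|w_1|,|w_2|]$, the single monomial $|b_k w^k|$ dominates \emph{every} other term in $h(w)$ by a definite geometric factor, after which the estimate on $h'$ is a three-line triangle inequality.

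First I would use Lemma \ref{L:Dominance} to reduce $F$ to a max of finitely many monomials: for $|w|\leq B^{-1}$ one has $F(|w|)=\max_{1\leq j\leq m}|b_j w^j|$, so the hypotheses of Lemma \ref{L:Mono} apply with $N\leq m$ nonzero monomials on $[0,|w_0|]$. Applying Lemma \ref{L:DomInt} with $x_0=|w_0|$ produces an integer $k\in\{1,\dots,m\}$ and points $x',x''$ with $x'=a^{2N+1}x''$, $x'\geq a^{(N+1)(2N+1)}|w_0|$, and $F(|w|)=|b_k w^k|$ on $[x',x'']$. I then set $|w_2|=a^N x''$ and $|w_1|=a|w_2|=a^{N+1}x''=a^{-N}x'$. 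Thus $[|w_1|,|w_2|]\subset[x',x'']$, so $F(|w|)=|b_k w^k|$ there, the ratio $|w_1|/|w_2|=a$ holds, and $|w_1|=a^{-N}x'\geq a^{(N+1)(2N+1)-N}|w_0|\geq a^{(m+1)(2m+1)}|w_0|$.

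The crucial payoff of the $a^N$-buffer on each side is the uniform gap estimate
\[
|b_j w^j|\leq a^N|b_k w^k|,\qquad j\in\{1,\dots,m\}\setminus\{k\},\ |w|\in[|w_1|,|w_2|].
\]
For $j>k$ this follows by using $|b_j(x'')^j|\leq|b_k(x'')^k|$ (from $F(x'')=|b_k(x'')^k|$) and writing $|w|=\lambda x''$ with $\lambda\leq a^N$, so $|b_j w^j|/|b_k w^k|=\lambda^{j-k}\leq a^N$. For $j<k$ the symmetric estimate at $x'$ with $|w|=\mu x'$, $\mu\geq a^{-N}$, gives the same bound. Dividing by $|w|$ and multiplying by $j\leq m$ and summing over the $\leq m$ such indices yields $\sum_{j\leq m,j\neq k}j|b_jw^{j-1}|\leq m^2 a^N|b_k w^{k-1}|$. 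For the tail $j>m$, I use $|b_j|\leq B$, $|w|\leq|w_2|\leq a^N B^{-1}$, and the inequality $|b_k w^{k-1}|\geq|w|^{m-1}$ (coming from $F(|w|)\geq|b_m w^m|\geq|w|^m$, so $|b_k|\geq|w|^{m-k}$); a geometric-series computation gives $\sum_{j>m}j|b_jw^{j-1}|\leq C_m(B|w|)|w|^{m-1}\leq C_m a^N|b_k w^{k-1}|$ for a constant $C_m$ depending only on $m$. Combining,
\[
|h'(w)|\geq k|b_k w^{k-1}|-(m^2+C_m)a^N|b_k w^{k-1}|\geq\tfrac12 k|b_k w^{k-1}|,
\]
the last inequality being the point where $a<1/8$ (together with $N\geq 1$) is used to absorb the constants.

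The main obstacle is the bookkeeping in the gap estimate: one must verify that the buffer built into the choice of $|w_1|,|w_2|$ inside the Lemma \ref{L:DomInt} interval is simultaneously large enough on \emph{both} endpoints to suppress higher- and lower-degree monomials, while also leaving the tail $\sum_{j>m}$ uniformly small relative to the dominant term $|b_k w^{k-1}|$. The identity $|b_k w^{k-1}|\geq|w|^{m-1}$ is what makes the tail estimate commensurate with the dominant term; without it the tail would not be expressed in the right units. Everything else is careful arithmetic with the factor $a^{2N+1}$ supplied by Lemma \ref{L:DomInt}, which is precisely engineered to allow the split $a^{2N+1}=a^{N}\cdot a\cdot a^{N}$ used above.
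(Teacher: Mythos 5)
Your overall strategy --- locating a dominance interval via Lemma \ref{L:DomInt}, padding it on each side by a power of $a$ so that a single monomial $|b_kw^k|$ controls the others throughout $[|w_1|,|w_2|]$, and then applying the triangle inequality to $h'$ --- is exactly the paper's. The gap is in how you sum the lower-order contributions. You replace the per-term geometric decay by the uniform bound $|b_jw^j|\le a^N|b_kw^k|$ (correct, but sharp only when $|j-k|=1$), multiply by the number of indices and by $j\le m$ to get $\sum_{j\le m,\,j\ne k}j|b_jw^{j-1}|\le m^2a^N|b_kw^{k-1}|$, and add a separate tail bound $C_ma^N|b_kw^{k-1}|$. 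Your final absorption then requires $(m^2+C_m)a^N\le k/2$, and this does \emph{not} follow from $a<\tfrac18$ and $N\ge1$: the factors $m^2$ and $C_m$ grow with $m$ while $a^N$ need not compensate when $N$ is small relative to $m$ (e.g.\ $N=2$, $k=1$, $m$ large makes $m^2a^2$ arbitrarily large for a fixed $a$). Since $a$ is a universal constant in the paper, fixed once and for all independently of the degrees $m_s$, this step fails.

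The paper sidesteps the issue by retaining the exponential decay in $|j-k|$ rather than flattening it to $a^N$: on $[|w_1|,|w_2|]$ one has $|b_jw^j|\le a^{N(j-k)}|b_kw^k|$ for $j>k$ (and $a^{N(k-j)}$ for $j<k$), and since $N\ge1$ this gives a convergent geometric series,
\begin{equation*}
\sum_{j>k} j\,|b_jw^{j-1}| \le \Bigl(\sum_{l\ge1}\tfrac{l+k}{k}\,a^{\,l}\Bigr)k\,|b_kw^{k-1}| \le \frac{a(2-a)}{(1-a)^2}\,k\,|b_kw^{k-1}|,
\end{equation*}
with a similar bound $\tfrac{a}{1-a}k|b_kw^{k-1}|$ for $j<k$; the combined constant is $<\tfrac12$ for $a<\tfrac18$ uniformly in $m$, $N$, and $k$. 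Note also that the tail $j>m$ is subsumed in the same sum: the comparison $|b_j(w'')^j|\le F(|w''|)=|b_k(w'')^k|$ holds for \emph{all} $j\ge1$ (by Lemma \ref{L:Dominance} together with the definition of $F$), so your separate tail estimate and the auxiliary identity $|b_kw^{k-1}|\ge|w|^{m-1}$ are unnecessary. In short, the idea is correct, but the geometric-series bookkeeping is essential: it cannot be collapsed to a single power $a^N$ multiplied by a polynomial count in $m$.
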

\begin{proof}
By Lemma \ref{L:DomInt}, there exist $w'$  and  $w''$ with $|w'|=a^{2m+1}|w''|$,
and an integer $k$ with $1\leq k
\leq m$ such that
$|w'| \geq a^{(m+1)(2m+1)}|w_0|$
and
$F(|w|)=|b_kw^k|$ for $|w'| \leq |w| \leq |w''|$.
Hence, it follows that for any $j$ 
\begin{equation*}
|b_j(w')^j| \leq |b_k (w')^k| \quad \mbox{and} \quad |b_j(w'')^j| \leq |b_k(w'')^k|.
\end{equation*}
\noindent \textbf{Case 1}. If $|w| \leq  a^m |w''|$ and $j>k$, then 
\begin{align*}
\frac{|b_j j w^{j-1}|}{|b_k k w^{k-1}|} & = \frac{|b_j|}{|b_k|} \left(\frac{j}{k} \right)|w|^{j-k} \\
& \leq \frac{|b_j|}{|b_k|} \left(\frac{j}{k}\right) a^{m(j-k)} |w''|^{j-k}
= \frac{|b_j(w'')^j|}{|b_k (w'')^k|} \left(\frac{j}{k}\right) a^{m(j-k)}.
\end{align*}
Since $|b_j(w'')^j| \leq |b_k(w'')^k|$, it follows that 
\begin{equation*}
|b_j j w^{j-1}| \leq \left(\frac{j-k}{k} +1\right) a^{m(j-k)} |b_k k w^{k-1}|, \quad j>k.
\end{equation*}
Hence, if $|w| \leq a^m|w''|$, then 
\begin{align}
\sum_{j=k+1}^{\infty} |b_j jw^{j-1}| &\leq \left[\sum_{j=k+1}^{\infty} \left(\frac{j-k}{k}+1 \right)
a^{m(j-k)}\right]|b_k k w^{k-1}| \notag \\
&\leq \frac{a(2-a)}{(1-a)^2} |b_k k w^{k-1}| \label{E:jbig}
\end{align}
\noindent \textbf{Case 2}. If $|w| \geq a^{-m} |w'|$ and $j<k$, then 
in a similar way, we obtain that 
\begin{equation}\label{E:jsmall}
\sum_{j=1}^{k-1} |b_j j w^{j-1}|  \leq \frac{a}{1-a} |b_k k w^{k-1}|.
\end{equation}
Now let $w_1=a^{-m}w'$ and $w_2=a^{m} w''$. Since $|w'|=a^{2m+1}|w''|$, it follows that $|w_1|=a|w_2|$. Note that
$|w_1|$ and $|w_2|$ satisfy \eqref{E:w1w2}. Furthermore, since 
\begin{equation*}
|h'(w)|\geq |b_kkw^{k-1}| - \sum_{j\ne k} |b_j j w^{j-1}|,
\end{equation*}
by combining \eqref{E:jbig} and \eqref{E:jsmall}. we have 
\begin{equation*}
|h'(w)| \geq \left( 1-\frac{a(3-a)}{1-a)^2} \right) |b_k k w^{k-1}|, \quad |w_1|\leq |w| \leq |w_2|.
\end{equation*}
Therefore, since $0<a<\frac{1}{8}$, we obtain \eqref{E:fprime}.
\end{proof}
\begin{lemma}\label{L:basicestprime}
Let $p \in U$, $\mu>1$, and $0<\delta\leq \tilde \delta_{\mu}$, and let $d$ be the constant in  Lemma
\ref{L:CompareCoeff}. Let denote $N=\max\{m_k: 1\leq k \leq n\}$. For each $s$, $1\leq s \leq n$, there exist
constant $a_s>0$, and integers $k$ with $1 \leq k \leq J_s \leq m_s$ satisfying 
\begin{equation}
da^{(N+1)(2N+1)} \leq a_s \leq d a^N, \quad s=1,\dots,n,
\end{equation}
so that  if $ \frac{1}{2} a_s \tau_s \leq |u_s| \leq a_s \tau_s,$ then 
\begin{equation}\label{E:Fsmaxbs}
F_s^p(|u_s|) =|b_{s,k} u_s^{k}|,
\end{equation}
and 
\begin{equation}\label{E:partialfszs}
\left| \pa{f_s^p}{u_s}(u) \right| \geq \frac{1}{2} \left|b_{s,k}k u_s^{k-1}\right| - \sum_{\alpha \in \mathcal
M_s, \;\alpha_s \geq 1}  \left|c_{s,\alpha} u_1^{\alpha_1} \dots u_{s-1}^{\alpha_{s-1}} ( \alpha_s
u_s^{\alpha_s-1})\right|.
\end{equation}
\end{lemma}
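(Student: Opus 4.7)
The plan is to isolate the pure one-variable part of $f_s^p$, namely $h(w):=\sum_{j\geq 1} b_{s,j}(p)\,w^j$, apply Lemma \ref{L:EstHprime} to $h$ on a scale determined by $\tau_s$, and then absorb the mixed terms into the error in \eqref{E:partialfszs} by the triangle inequality. Concretely, I would take $|w_0|=d\,a^{N-m_s}\tau_s$. Since $N\geq m_s$, $a<1$, and $d<1/2$, this satisfies $|w_0|\leq d\tau_s\leq B_s^{-1}$ by \eqref{E:tausBs}, so Lemma \ref{L:EstHprime} applies with $m=m_s$ and $B=B_s$, producing $w_1,w_2$ and an integer $k$ with $1\leq k\leq m_s$, $|w_1|=a|w_2|$, $|w_2|\leq a^{m_s}|w_0|=d\,a^N\tau_s$, $|w_1|\geq a^{(m_s+1)(2m_s+1)}|w_0|$, and such that on $|w_1|\leq |w|\leq |w_2|$ we have $F_s^p(|w|)=|b_{s,k}w^k|$ and $|h'(w)|\geq \tfrac12|b_{s,k}\,k\,w^{k-1}|$.

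Set $a_s:=|w_2|/\tau_s$. The upper bound on $|w_2|$ yields $a_s\leq d\,a^N$ immediately. For the lower bound, $a_s=|w_1|/(a\tau_s)\geq d\,a^{(m_s+1)(2m_s+1)+N-m_s-1}$, and direct expansion gives $(N+1)(2N+1)-[(m_s+1)(2m_s+1)+N-m_s-1]=2(N-m_s)(N+m_s+1)+1\geq 1$, so $a_s\geq d\,a^{(N+1)(2N+1)}$. If $\tfrac{a_s}{2}\tau_s\leq |u_s|\leq a_s\tau_s$, then since $a<1/8<1/2$ we get $|w_1|=a|w_2|\leq |w_2|/2=\tfrac{a_s}{2}\tau_s\leq |u_s|\leq |w_2|$, so $|u_s|$ lies in the interval where Lemma \ref{L:EstHprime} applies and \eqref{E:Fsmaxbs} follows directly. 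For \eqref{E:partialfszs}, I would split
\[
\pa{f_s^p}{u_s}(u)=h'(u_s)+\sum_{\alpha\in \mathcal M_s,\;\alpha_s\geq 1} c_{s,\alpha}\,u_1^{\alpha_1}\cdots u_{s-1}^{\alpha_{s-1}}\,\alpha_s\,u_s^{\alpha_s-1},
\]
apply the triangle inequality, and substitute $|h'(u_s)|\geq \tfrac12|b_{s,k}\,k\,u_s^{k-1}|$.

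For $k\leq J_s$, I would use Lemma \ref{L:Dominance} (valid since $|w|\leq |w_2|\leq B_s^{-1}$) to reduce $F_s^p$ to the finite maximum $\max_{1\leq j\leq m_s}|b_{s,j}(p)w^j|$ and then invoke Lemma \ref{L:Mono}: its dominant-exponent sequence $j_1<\cdots<j_q$ has the monotonicity property that on any subinterval of $(0,\tau_s]$ the dominant exponent is at most the dominant (minimum) exponent at $\tau_s$ itself, which is $J_s$ by Definition \ref{D:DominantType}. Since $|w_2|\leq d\,a^N\tau_s<\tau_s$ and $F_s^p$ equals the single monomial $|b_{s,k}w^k|$ on $[|w_1|,|w_2|]$, this forces $k\leq J_s$.

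The one subtle point is the exponent bookkeeping that dictates the choice $|w_0|=d\,a^{N-m_s}\tau_s$. With the naïve choice $|w_0|=d\tau_s$ one obtains only $a_s\leq d\,a^{m_s}$, which lies below $d\,a^N$ just when $m_s=N$. The prefactor $a^{N-m_s}$ shifts both endpoints of the interval produced by Lemma \ref{L:EstHprime} by exactly enough to make the upper bound on $a_s$ uniform in $s$, at the mild cost of the slightly larger exponent $(N+1)(2N+1)$ in the matching lower bound. Everything else is routine bookkeeping on top of Lemma \ref{L:EstHprime}.
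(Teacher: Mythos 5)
Your argument is correct and rests on the same key tool, namely Lemma \ref{L:EstHprime} applied to the pure part $h(w)=\sum_{j\geq 1}b_{s,j}(p)w^j$, but with a different and in fact necessary choice of starting scale. The paper's proof applies Lemma \ref{L:EstHprime} with $|w_0|=\tilde\tau_s=d\tau_s$ and $m=m_s$, which yields $|w_2|\leq a^{m_s}d\tau_s$ and hence only $a_s\leq da^{m_s}$; since $0<a<1$ and $m_s\leq N$, this satisfies the claimed uniform bound $a_s\leq da^N$ only when $m_s=N$. That uniform bound is used downstream in Proposition \ref{P:dominantpartial}, where $|A_2|\leq 2^{n+1}a^N\mu^{-1}\sigma_s/(a_s\tau_s)$ relies on $a_s\leq a^N$, so the discrepancy matters. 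Your rescaled choice $|w_0|=da^{N-m_s}\tau_s$ shifts the interval produced by Lemma \ref{L:EstHprime} by exactly $a^{N-m_s}$ and converts the $s$-dependent bound $da^{m_s}$ into the uniform $da^N$, while your identity $(N+1)(2N+1)-\bigl[(m_s+1)(2m_s+1)+N-m_s-1\bigr]=2(N-m_s)(N+m_s+1)+1\geq 1$ shows the lower bound $a_s\geq da^{(N+1)(2N+1)}$ survives the shift. You have also supplied the argument, left implicit in the paper, that $k\leq J_s$: since $|w_2|<\tau_s$ and the dominant exponent of $F_s^p$, reduced to a finite maximum via Lemma \ref{L:Dominance} and then read off the monotone interval structure of Lemma \ref{L:Mono}, is nondecreasing in $|w|$, the exponent $k$ dominating on $[|w_1|,|w_2|]$ is at most the dominant exponent $J_s$ at $\tau_s$. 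In short, your version is the more careful and complete proof; the paper's terse reduction to Lemma \ref{L:EstHprime} does not, as written, deliver the stated $s$-uniform bounds on $a_s$.
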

\begin{proof}
It follows from \eqref{E:tausBs} that $\tilde \tau_s(p,\mu,\delta) \leq \tau_s(p,\mu,\delta) \leq B_s^{-1}$.
Replace $|w_0|$ by $\tilde \tau_s(p,\mu,\delta)$,  and $h(z)$ by  $ \sum_{j\geq 1} b_{s,j}(p)u_s^j$ in $f_s^p$,
and $\tilde \tau_s$, $1 \leq s \leq n$ in Lemma \ref{L:EstHprime}, respectively. to obtain \eqref{E:Fsmaxbs} and
\eqref{E:partialfszs}.
\end{proof}

\begin{proposition}\label{P:dominantpartial}
Let $d$ be the constant constructed in  Lemma \ref{L:CompareCoeff}.  There exist constants $\mu_1$ and $D$,
depending only on $n$ and $N$, with $\mu_1>1$ and $0<D<1$, so that the following properties hold: Suppose that
$\mu\geq \mu_1$ and let $a_s$, $1\leq s \leq n$ denote the constants constructed for $p$, $\mu$, and $\delta$, in
Lemma \ref{L:basicestprime}.
\begin{itemize}
\item [\textup{(i)}] If 
$ z \in R_{\mu,\delta}^{s-1}(p:a_1,\dots, a_{s-1})$ and $\frac{1}{2}a_s \tau_s \leq |z_s-p_s|\leq a_s\tau_s, $ 
then 
\begin{equation}\label{E:Estfszs}
\left|\pa{f_s}{z_s}(z) \right| \geq D \frac{\sigma_s}{a_s\tau_s}.
\end{equation}
\item [\textup{(ii)}]
 If $z \in
R_{\mu,\delta}^s(p: a_1,\dots,a_s)$, then 
\begin{equation}\label{E:mixedpartial}
\left| \pa{f_{s}}{z_i} (z)\right| \leq \frac{2^{n+1}}{\mu} \frac{\sigma_{s}}{a_i\tau_i}, \quad 1 \leq i <s.
\end{equation}
\end{itemize}
\end{proposition}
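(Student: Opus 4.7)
The argument splits along the two parts; both rest on Lemma \ref{L:basicestprime} together with the coefficient estimate \eqref{E:csalphaest}. Part (ii) is a purely geometric coefficient estimate and is the easier one; the real work is in (i), where the $1/\mu$-factor carried by the mixed coefficients must be used to absorb the whole mixed tail. The resulting constants $\mu_1$ and $D$ are tracked to depend only on $n$, $N$ (through $m_1,\dots,m_n$), the universally fixed $a$, and the constant $d$ from Lemma \ref{L:CompareCoeff}.

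For part (i), I would begin by applying Lemma \ref{L:basicestprime}. It produces an integer $k$ with $1\leq k\leq J_s\leq m_s$ and a constant $a_s$ so that on the annulus $\tfrac12 a_s\tau_s\leq |u_s|\leq a_s\tau_s$
\[
\Bigl|\pa{f_s^p}{u_s}(u)\Bigr|\geq \tfrac12\bigl|b_{s,k}k u_s^{k-1}\bigr|-\sum_{\substack{\alpha\in\mathcal{M}_s\\ \alpha_s\geq 1}}\bigl|c_{s,\alpha}u_1^{\alpha_1}\cdots u_{s-1}^{\alpha_{s-1}}\alpha_s u_s^{\alpha_s-1}\bigr|.
\]
The dominant term I bound from below by noting that $|b_{s,k}u_s^k|=F_s^p(|u_s|)$ on the annulus, whence $\bigl|b_{s,k}k u_s^{k-1}\bigr|\geq F_s^p(|u_s|)/|u_s|$. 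Since $\tau_s\leq B_s^{-1}$ by \eqref{E:tausBs}, Lemma \ref{L:Dominance} expresses $F_s^p$ as a maximum of monomials of degree at most $m_s$, so $F_s^p(a_s\tau_s/2)\geq(a_s/2)^{m_s}F_s^p(\tau_s)=(a_s/2)^{m_s}\sigma_s$; consequently $\tfrac12\bigl|b_{s,k}k u_s^{k-1}\bigr|\geq(a_s/2)^{m_s}\sigma_s/(2 a_s\tau_s)$ throughout the annulus. For the mixed tail, \eqref{E:csalphaest} combined with $|u_i|\leq a_i\tau_i$ for $i<s$ and $|u_s|\leq a_s\tau_s$ bounds each summand by $(\sigma_s/\mu\tau_s)\,\alpha_s a_s^{\alpha_s-1}\prod_{i<s}a_i^{\alpha_i}$; summing geometrically using $a_i\leq d<\tfrac12$ gives a total of at most $C_n\sigma_s/(\mu\tau_s)$ for some $C_n$ depending only on $n$ and $N$.

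Combining, $|\partial f_s^p/\partial u_s|\geq\bigl[(a_s/2)^{m_s}/(2 a_s)-C_n/\mu\bigr]\sigma_s/\tau_s$. Choosing $\mu_1$ large enough that $C_n/\mu_1\leq(a_s/2)^{m_s}/(4 a_s)$ for every $s$ and setting $D$ to be, say, $(\min_s(a_s/2)^{m_s})/4$ yields (i). For part (ii), differentiating the series for $f_s^p$ in $u_i$ with $i<s$ removes the pure $u_s^j$ part and leaves
\[
\pa{f_s^p}{u_i}(u)=\sum_{\substack{\alpha\in\mathcal{M}_s\\ \alpha_i\geq 1}}c_{s,\alpha}\alpha_i u_1^{\alpha_1}\cdots u_i^{\alpha_i-1}\cdots u_s^{\alpha_s};
\]
applying \eqref{E:csalphaest} with $|u_j|\leq a_j\tau_j$ bounds $|\pa{f_s^p}{u_i}|$ by $(\sigma_s/\mu\tau_i)(1-a_i)^{-2}\prod_{j\neq i,\,j\leq s}(1-a_j)^{-1}\leq 2^{s+1}\sigma_s/(\mu\tau_i)\leq 2^{n+1}\sigma_s/(\mu\tau_i)$, and since $a_i\leq 1$ this gives (ii).

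The main obstacle is the calibration in (i): the lower bound on the pure-$u_s$ piece is of size $(a_s/2)^{m_s}\sigma_s/(a_s\tau_s)$ and must dominate the mixed tail of size $\sigma_s/(\mu\tau_s)$. The crucial ingredients that make this possible are the $1/\mu$-factor in \eqref{E:csalphaest}, which lets us shrink the mixed contribution by taking $\mu_1$ large, and Lemma \ref{L:Dominance}, which truncates $F_s^p$ at degree $m_s$ so that the rescaling loss in passing from $\tau_s$ to $a_s\tau_s/2$ is only $(a_s/2)^{m_s}$ rather than an uncontrolled power.
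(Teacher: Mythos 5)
Your proof is correct and follows essentially the same route as the paper: both split $\partial f_s/\partial z_s$ via Lemma \ref{L:basicestprime} into a dominant pure term bounded below through $F_s^p$ on the annulus (the paper goes via $|b_{s,k}u_s^k|\geq|b_{s,J_s}u_s^{J_s}|=(\tfrac12 a_s)^{J_s}\sigma_s$ rather than the $(a_s/2)^{m_s}$ rescaling of $F_s^p$, but these are equivalent) and a mixed tail controlled by \eqref{E:csalphaest} and a geometric series, with $\mu_1$ chosen to absorb the tail; part (ii) is the same geometric-series estimate in both. The only thing worth tightening is that your stated choices for $D$ and $\mu_1$ reference $a_s$ directly, which varies with $(p,\mu,\delta)$ — to get constants depending only on $n,N$ you should substitute the uniform bounds $da^{(N+1)(2N+1)}\leq a_s\leq da^N$, exactly as the paper does in fixing $D=2^{-(N+2)}d^N a^{N(N+1)(2N+1)}$ and $\mu_1=2^{n+N+3}d^{-N}a^{-N(N+1)(2N+1)}$.
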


\begin{proof}
We choose the value of $\mu_1>1$ as 
\begin{equation}\label{E:Cond1Mu}
\mu_1 = 2^{n+N+3} d^{-N}a^{-N(N+1)(2N+1)},
\end{equation}
where $d$ is the constant in Lemma \ref{L:CompareCoeff} and $a$ is a fixed constant with $0<a<\frac{1}{8}$. Note
that $\mu_1$ depends only on $n$ and $N$.  Let $p\in U$, $\mu >\mu_1$, and $0<\delta\leq \tilde \delta_{\mu}$.
Let $A_1$ and $A_2$ denote the first and second term in \eqref{E:partialfszs}, respectively. It follows from
\eqref{E:Fsmaxbs} that for any $j\geq 1$ 
\begin{equation*}
|b_{s,k}u_s^k| \geq |b_{s,j}u_s^j|, \quad \frac{1}{2}a_s\tau_s \leq |u_s| \leq a_s \tau_s.
\end{equation*}
Hence, if $\frac{1}{2}a_s\tau_s\leq  |u_s| \leq a_s \tau_s $, then 
\begin{equation}\label{E:bskusk}
|b_{s,k}u_s^k| \geq \left|b_{s,k}\left(\frac{1}{2}a_s \tau_s \right)^k \right| \geq
\left|b_{s,J_s}\left(\frac{1}{2}a_s \tau_s \right)^{J_s} \right| = \left( \frac{1}{2} a_s \right)^{J_s}
\left|b_{s,J_s} \;\tau_s^{J_s} \right|.
\end{equation}
Since $J_s \leq m_s \leq N$, $a_s \geq d a^{(N+1)(2N+1)}$, and since $\sigma_s= F_s(\tau_s)= \left|b_{s,J_s} \;
\tau_s^{J_s} \right|$, 
it follows from  \eqref{E:bskusk} that if 
\begin{equation}\label{E:zscondition}
\frac{1}{2}a_s\tau_s\leq  |u_s| \leq a_s \tau_s
\end{equation}
then 
\begin{equation}\label{E:bszsEst}
|A_1|\geq \frac{1}{2|u_s|} |b_{s,k} u_s^k| \geq 2^{-(N+1)} d^Na^{N(N+1)(2N+1)} \frac{\sigma_s}{a_s\tau_s} .
\end{equation}

Now we estimate $A_2$. It follows from \eqref{E:csalphaest} that if $u$ satisfies $ |u_i| \leq a_i\tau_i, \quad
1\leq i \leq s, $ then 
\begin{align*}
|A_2|& \leq \frac{\sigma_s}{\mu (a_s\tau_s)} \sum_{\alpha \in \mathcal M_s, \;\alpha_s\geq 1} \left[\alpha_s
\left\{\prod_{i=1}^{s-1}\left(\frac{a_i\tau_i}{\tau_i}\right)^{\alpha_i}\right\} \left( \frac{a_s\tau_s}{\tau_s}
\right)^{\alpha_s}\right] \notag \\
& \leq \frac{\sigma_s}{\mu(a_s\tau_s)}\sum_{\alpha \in \mathcal M_s, \;\alpha_s \geq 1}
\left[\left( \prod_{i=1}^{s-1}a_i^{\alpha_i} \right) (\alpha_s a_s^{\alpha_s})\right] \notag\\
& \leq \frac{\sigma_s}{\mu(a_s\tau_s)}\left(\prod_{i=1}^{s-1}\frac{1}{1-a_i} \right)\frac{a_s}{(1-a_s)^2}.
\end{align*}
Since $\frac{1}{1-a_i} \leq 2$, $1 \leq i \leq s$, and since $a_s \leq d a^N \leq a^N$, it follows that if 
\begin{equation}\label{E:zicondition}
|u_i| \leq a_i\tau_i, \quad 1\leq i \leq s,
\end{equation}
then 
\begin{equation}\label{E:secondfszs}
|A_2| \leq \frac{2^{n+1}a^N}{\mu} \frac{\sigma_s}{a_s \tau_s}
\end{equation}
By combining the conditions, \eqref{E:zscondition} and \eqref{E:zicondition}, and the inequalities,
\eqref{E:bszsEst} and \eqref{E:secondfszs}, we have 
\begin{equation*}
\left|\pa{f_s^p}{u_s}(u) \right| \geq \left(2^{-(N+1)}d^Na^{N(N+1)(2N+1)} - \frac{2^{n+1}a^N}{\mu} \right)
\frac{\sigma_s}{a_s\tau_s}.
\end{equation*}
Set $D=2^{-(N+2)}d^Na^{N(N+1)(2N+1)}$, then it follows from \eqref{E:Cond1Mu} that \eqref{E:Estfszs} holds for
$\mu\geq \mu_1$ and $0<\delta \leq \tilde\delta_{\mu}$.

We now consider the mixed partial derivatives. We see that if $|u_j| \leq a_j\tau_j$, $1 \leq j \leq s$, then
\begin{align*}
\left| \pa{f_{s}^p}{u_i}(u)\right| &\leq \sum_{\alpha \in \mathcal M_s, \;\alpha_i \geq 1} \left| c_{s,\alpha}
u_1^{\alpha_1} \dots u_{i-1}^{\alpha_{i-1}}
u_{i+1}^{\alpha_{i+1}} \dots u_{s}^{\alpha_{s}}\right||(\alpha_i u_i^{\alpha_i-1})| \notag\\
& \leq \frac{1}{a_i \tau_i} \sum_{\alpha \in \mathcal M_s, \;\alpha_i\geq 1} \left|c_{s,\alpha} \alpha_i
\prod_{1\leq i \leq s}(a_i\tau_i)^{\alpha_i} \right|
\end{align*}
Since by \eqref{E:csalphaest}
\[
|c_{s,\alpha}| \leq \frac{\sigma_s}{\mu}\prod_{1\leq i \leq s} \tau_i^{-\alpha_i},
\]
it follows that 
\begin{equation}
\left| \pa{f_{s}^p}{u_i}(u)\right| \leq \frac{\sigma_{s}}{\mu(a_i\tau_i)} \left(\prod_{j\ne i \atop 1 \leq j
<s}\frac{1}{1-a_j}\right) \frac{a_i}{(1-a_i)^2}
\end{equation}
Therefore, we obtain \eqref{E:mixedpartial}.
\end{proof}
\section{Local Plurisubharmonic Functions}\label{S:local}
In this section we shall construct compactly supported plurisubharmonic functions with large Hessian near the
boundary, by adding well-chosen cut-off functions and then taking  compositions with convex functions. Let
$\chi(t)$ be a smooth function defined by
\begin{equation}\label{E:defchi}
\chi(t)=
\begin{cases}
\frac{1}{2n} t + \frac{3}{4} -\frac{1}{2n}, \quad & 0 \leq t \leq \frac{1}{2} \\
0 , \quad t \geq 1
\end{cases}
\end{equation}
and  $0 \leq \chi(t) \leq \frac{3}{4}$, $t\geq 0$. Let denote
\begin{equation}\label{E:constantM}
M= \sup\{ |\chi'(t)| + |\chi''(t)| : t\geq 0 \}.
\end{equation}
%
\begin{definition}
Let $\mu_1$ be the constant constructed in Proposition \ref{P:dominantpartial} and  let $p \in U$, $\mu\geq
\mu_1$, and $0<\delta\leq \tilde \delta_{\mu}$. Let $a_s$, $1 \leq s \leq n$, be the constants constructed for
$p$, $\mu$, $\delta$ in Lemma \ref{L:basicestprime}. For $p$, $\mu$, $\delta$, $s$, we define
\begin{equation}\label{E:Defchis}
\chi^p_{s,\mu,\delta}(z)=\chi\left(\frac{|z_s-p_s|^2}{(a_s\tau_s)^2}\right).
\end{equation}

\end{definition}
\begin{remark}
Note that $\Supp (\chi^p_{s,\mu,\delta}) \subset \{z \in \mathbb C^{n} : |z_s-p_s| \leq a_s \tau_s \}$,
and that
\begin{equation}\label{E:Hesschihalf}
\ddba \chi^p_{s,\mu,\delta}(z) (L,\bar L) = \frac{1}{2n} \frac{|t_s|^2}{(a_s\tau_s)^2}, \quad |z_s-p_s| \leq
\frac{1}{2} a_s \tau_s, \quad 1\leq s\leq n.
\end{equation}
Moreover, for all $L=t_1\pa{}{z_1}+ \dots + t_n \pa{}{z_n}$, $t_i \in \mathbb C$,
\begin{equation}\label{E:Hesschifull}
\left| \ddba \chi^p_{s,\mu,\delta}(z)(L,\bar L) \right| \leq M\frac{|t_s|^2}{(a_s\tau_s)^2}, \quad z \in \Supp
(\chi^p_{s,\mu,\delta}).
\end{equation}
\end{remark}
\begin{definition}
Let $\eta>0$ be a constant and define
\begin{equation}\label{E:sGpdelta}
G^p_{s,\mu,\eta,\delta}(z)= \frac{\eta\sum_{i=1}^s|f_i|^2}{\delta} + \sum_{i=1}^s \chi^p_{s,\mu,\delta}(z).
\end{equation}
When $s=n$, we write
\begin{equation}\label{E:Gpdelta}
G^p_{\mu,\eta,\delta}(z)=G^p_{n,\eta,\nu,\delta}(z).
\end{equation}
\end{definition}
\begin{theorem}\label{T:PluriCn}
Let $\mu_1$ be the constant in Proposition \ref{P:dominantpartial}. Then there exist constants, $\eta>1$ and
$\mu>\mu_1$, depending only on $n$ and $N$, so that  the following property holds: Let $a_1,\dots, a_s$ be the
constants constructed  for $p $, $\mu$, and $\delta$ with $0 <\delta \leq \tilde \delta_{\mu}$ in Lemma
\ref{L:basicestprime}. If 
$z \in R_{\mu,\delta}(p:a_1,\dots,a_n)$,
then 
\begin{equation}\label{E:desingular}
\ddba G^p_{\mu,\eta,\delta}(L,\bar L)(z) \geq \frac{1}{4n} \sum_{i=1}^n \frac{|t_i|^2}{(a_i\tau_i)^2}.
\end{equation}
\end{theorem}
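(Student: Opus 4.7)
The plan is to expand the Hessian as
\[
\ddba G^p_{\mu,\eta,\delta}(L,\bar L) \;=\; \frac{\eta}{\delta}\sum_{s=1}^{n}|Lf_s|^2 \;+\; \sum_{s=1}^{n}\ddba\chi^p_{s,\mu,\delta}(L,\bar L),
\]
where $Lf_s=\sum_{i\leq s}t_i\,\partial f_s/\partial z_i$ since $f_s$ only involves $z_1,\dots,z_s$. Set $X_s=|t_s|^2/(a_s\tau_s)^2$ and partition $\{1,\dots,n\}=\mathcal I\sqcup\mathcal S$ by whether $|z_s-p_s|\leq\tfrac12 a_s\tau_s$ (inner) or $\tfrac12 a_s\tau_s\leq|z_s-p_s|\leq a_s\tau_s$ (outer). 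By \eqref{E:Hesschihalf} and \eqref{E:Hesschifull},
\[
\sum_{s=1}^n\ddba\chi^p_{s,\mu,\delta}(L,\bar L) \;\geq\; \frac{1}{2n}\sum_{s\in\mathcal I}X_s \;-\; M\sum_{s\in\mathcal S}X_s.
\]

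For each $s\in\mathcal S$ I would write $Lf_s=t_s\,\partial f_s/\partial z_s+R_s$ with $R_s=\sum_{i<s}t_i\,\partial f_s/\partial z_i$. Proposition \ref{P:dominantpartial} furnishes $|\partial f_s/\partial z_s|\geq D\sigma_s/(a_s\tau_s)$ and $|\partial f_s/\partial z_i|\leq 2^{n+1}\sigma_s/(\mu\,a_i\tau_i)$ for $i<s$, so Cauchy--Schwarz gives
\[
|R_s|^2 \;\leq\; (n-1)\,\frac{2^{2(n+1)}\sigma_s^2}{\mu^2}\sum_{i<s}X_i.
\]
Split $\mathcal S=\mathcal S_A\sqcup\mathcal S_B$ according to whether $|R_s|\leq\tfrac12|t_s\,\partial f_s/\partial z_s|$ or not. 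If $s\in\mathcal S_A$, the reverse triangle inequality yields $|Lf_s|\geq\tfrac12|t_s\,\partial f_s/\partial z_s|$, hence
\[
\frac{\eta}{\delta}|Lf_s|^2 \;\geq\; \frac{\eta D^2}{4}\cdot\frac{\sigma_s^2}{\delta}\,X_s \;\geq\; \frac{\eta D^2}{4}X_s,
\]
using $\sigma_s^2\geq\delta$ from \eqref{E:sigmadelta}. If $s\in\mathcal S_B$, comparing the two sides of the defining inequality and cancelling $\sigma_s^2$ yields
\[
X_s \;\leq\; \frac{C}{\mu^2}\sum_{i<s}X_i,\qquad C:=\frac{4(n-1)2^{2(n+1)}}{D^2},
\]
a constant depending only on $n$ and $N$.

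Now I would choose $\eta>1$ so large that $\eta D^2/4-M\geq\tfrac{1}{2n}$, and then $\mu\geq\mu_1$ large enough that $nC/\mu^2\leq\tfrac12$ and $8n^2C(M+1/(4n))/\mu^2\leq 1$. Writing $Y=\sum_{\mathcal I\cup\mathcal S_A}X_s$ and $Z=\sum_{\mathcal S_B}X_s$, the estimates combine to give $\ddba G^p_{\mu,\eta,\delta}(L,\bar L)\geq\tfrac{1}{2n}Y-MZ$. Summing $X_s\leq(C/\mu^2)(Y+Z)$ over $s\in\mathcal S_B$ forces $Z(1-nC/\mu^2)\leq(nC/\mu^2)Y$, so $Z\leq(2nC/\mu^2)Y$, and then elementary algebra reduces the target inequality $\tfrac{1}{2n}Y-MZ\geq\tfrac{1}{4n}(Y+Z)$ to the single condition $\tfrac{1}{4n}Y\geq(M+\tfrac{1}{4n})Z$, which holds by the choice of $\mu$. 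This gives \eqref{E:desingular}.

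The main obstacle is the set $\mathcal S_B$, where the cross-terms in $Lf_s$ dominate the main diagonal term and one cannot extract any usable positive contribution from $|Lf_s|^2$ in the $X_s$-direction. The resolution hinges on the fact that the very condition defining $\mathcal S_B$ forces $X_s$ itself to be small, controlled by the earlier $X_i$ with a gain of $1/\mu^2$ from Proposition \ref{P:dominantpartial}; the reserve $1/(2n)-1/(4n)=1/(4n)$ kept in the inner region then absorbs the resulting deficit, provided $\mu$ is chosen large enough (depending only on $n$ and $N$).
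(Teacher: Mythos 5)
Your proof is correct, but it reaches the same conclusion by a structurally different argument from the one in the paper. The paper proves Theorem~\ref{T:PluriCn} by first establishing Lemma~\ref{L:lamdelhessfs} (a per-coordinate estimate: when $z$ lies in the outer shell $\frac12 a_s\tau_s\le|z_s-p_s|\le a_s\tau_s$, the term $\ddba(\eta|f_s|^2/\delta)$ contributes $(M+1)X_s$ in the $s$-direction at the cost of a $\frac{1}{4n^2}\sum_{i<s}X_i$ deficit) and then running an induction on $s$, tracking the telescoping budget $\bigl(\frac{1}{2n}-\frac{s-i}{4n^2}\bigr)$ for the coefficient of $X_i$ in $\ddba G^p_{s,\mu,\eta,\delta}$. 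Your approach never introduces that intermediate lemma. Instead you partition the index set globally, at a fixed $z$, into $\mathcal I$ (inner), $\mathcal S_A$ (outer with diagonal term dominant), and $\mathcal S_B$ (outer with cross-term dominant). For $\mathcal I$ and $\mathcal S_A$ you use the cutoff and $|Lf_s|^2$ directly; for $\mathcal S_B$ you use \emph{only} the cutoff's lower bound $-MX_s$, discarding the nonnegative $|Lf_s|^2$, and instead show that the defining inequality for $\mathcal S_B$, combined with Proposition~\ref{P:dominantpartial}, forces $X_s\lesssim\mu^{-2}(Y+Z)$; summing and bootstrapping gives $Z\lesssim\mu^{-2}Y$, which absorbs into the $\frac{1}{4n}$ slack. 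Both arguments hinge on exactly the same smallness of mixed partials (the $\mu^{-1}$ gain in Proposition~\ref{P:dominantpartial}(ii)) and both produce $\mu,\eta$ depending only on $n$ and $N$. What the paper's route buys is a cleanly factored intermediate lemma and an arithmetically explicit budget; what your route buys is the elimination of the coordinate induction, replacing it with a single global case split and an elementary self-improving inequality for the ``bad'' set, which some readers may find more transparent about \emph{why} the estimate holds.
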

\begin{lemma}\label{L:lamdelhessfs}
There exist constants, $\eta>1$ and $\mu\geq \mu_1$, only depending on $n$ and $N$, so that the following
properties hold: Let $s=1,\dots,n$, and let $a_1,\dots, a_s$ be the constants constructed  for $p \in U$, $\mu$,
and $0 <\delta \leq \tilde \delta_{\mu}$ in Lemma \ref{L:basicestprime}.  If $z$ satisfies
\begin{equation}\label{E:zcondlemma}
z \in R_{\delta}^{s-1}(p:a_1,\dots,a_{s-1}), \quad  \mbox{and} \quad \frac{1}{2} a_s \tau_s \leq |z_s-p_s| \leq
a_s \tau_s,
\end{equation}
then for all $L=t_1\pa{}{z_1}+ \dots + t_n \pa{}{z_n}$, $t_i \in \mathbb C$
\begin{equation}\label{E:lamdelhessfs}
\ddba \left(\frac{\eta |f_s|^2}{\delta}\right)(z) (L,\bar{L}) \geq (M+1)\frac{|t_s|^2}{(a_s\tau_s)^2}-
\frac{1}{4n^2} \sum_{i=1}^{s-1} \frac{|t_i|^2}{(a_i\tau_i)^2}
\end{equation}
\end{lemma}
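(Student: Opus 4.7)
The plan is to exploit that $f_s$ is holomorphic, so $\ddba(|f_s|^2)(L,\bar L) = |Lf_s|^2$ when $L = \sum t_i \pa{}{z_i}$. I split $Lf_s = A+B$ with $A = t_s\,\pa{f_s}{z_s}$ and $B = \sum_{i<s} t_i\,\pa{f_s}{z_i}$. The hypothesis on $z$ is exactly what is needed to invoke Proposition~\ref{P:dominantpartial}, which gives the lower bound $|A| \geq D\sigma_s |t_s|/(a_s\tau_s)$ and, via Cauchy--Schwarz across the $s-1$ mixed partials, the upper bound $|B|^2 \leq (s-1)\,2^{2(n+1)}\mu^{-2}\sigma_s^2 \sum_{i<s}|t_i|^2/(a_i\tau_i)^2$.

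The hard part is that $\sigma_s^2/\delta$, though always $\geq 1$ by \eqref{E:sigmadelta}, can grow without bound as $\delta \to 0$; the naive inequality $|A+B|^2 \geq \tfrac12 |A|^2 - |B|^2$ therefore leaves a residual of size $\eta\sigma_s^2/(\mu^2\delta)$ that no universal $\mu = \mu(n,N)$ can absorb. My remedy is a dichotomy on the sign of the right-hand side of \eqref{E:lamdelhessfs}. Writing $T_s = |t_s|^2/(a_s\tau_s)^2$ and $T_{<s} = \sum_{i<s}|t_i|^2/(a_i\tau_i)^2$, if $T_{<s} \geq 4n^2(M+1)\,T_s$, then the right-hand side $(M+1)T_s - \tfrac{1}{4n^2}T_{<s}$ is non-positive, while the left-hand side $\tfrac{\eta}{\delta}|Lf_s|^2$ is manifestly non-negative, so the inequality is trivial.

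In the remaining regime $T_{<s} < 4n^2(M+1)\,T_s$, substituting this strict bound into the upper bound for $|B|^2$ yields $|B|^2 < 4n^2(M+1)(s-1)\,2^{2(n+1)}D^{-2}\mu^{-2}\,|A|^2$. Choose $\mu \geq \mu_1$ large enough (depending only on $n$ and $N$, since $s \leq n$ and $D = D(n,N)$) that this ratio is at most $\tfrac{1}{16}$; then $|A+B|^2 \geq (|A|-|B|)^2 \geq \tfrac{9}{16}|A|^2 \geq \tfrac{9}{16}D^2\sigma_s^2\, T_s$. Using $\sigma_s^2 \geq \delta$ and choosing $\eta \geq 16(M+1)/(9D^2)$, again a function of $n$ and $N$ alone, I obtain $\tfrac{\eta}{\delta}|Lf_s|^2 \geq (M+1)T_s$, which dominates the right-hand side of \eqref{E:lamdelhessfs} since the $-\tfrac{1}{4n^2}T_{<s}$ contribution only subtracts from it. This establishes \eqref{E:lamdelhessfs} with $\mu$ and $\eta$ depending only on $n$ and $N$, as required.
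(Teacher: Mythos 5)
Your proof is correct, but it takes a genuinely different route from the paper's. The paper never encounters the residual you describe, because it normalizes by $\sigma_s$ rather than by $\delta^{1/2}$ at the outset: by \eqref{E:sigmadelta}, $\frac{\eta}{\delta}|Lf_s|^2 \geq \eta\,|\sigma_s^{-1}Lf_s|^2$, and a weighted Cauchy--Schwarz bound on the cross terms, $2\Real(a_i\bar b) \geq -2n|a_i|^2-\tfrac{1}{2n}|b|^2$ with $b=\sigma_s^{-1}\pa{f_s}{z_s}t_s$ and $a_i=\sigma_s^{-1}\pa{f_s}{z_i}t_i$, yields $\eta\,|\sigma_s^{-1}Lf_s|^2 \geq \eta\bigl(1-\tfrac{s}{2n}\bigr)|b|^2 - 2n\eta\sum_{i<s}|a_i|^2$. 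Every quantity here carries $\sigma_s^{-2}$ instead of $\delta^{-1}$, so once Proposition~\ref{P:dominantpartial} replaces $|b|$ and $|a_i|$ by lower and upper bounds $D/(a_s\tau_s)$ and $2^{n+1}/(\mu a_i\tau_i)$, the factor $\sigma_s^2/\delta$ has disappeared and the single choice $\eta=2(M+1)D^{-2}$ together with $\mu$ large finishes in one shot. Your dichotomy achieves the same end by a different mechanism: when $T_{<s}\geq 4n^2(M+1)T_s$ the right-hand side of \eqref{E:lamdelhessfs} is nonpositive and the inequality is vacuous, and in the complementary regime the hypothesis combined with Proposition~\ref{P:dominantpartial} forces $|B|\leq\tfrac14|A|$ after choosing $\mu$ large, so the one-sided bound $\sigma_s^2\geq\delta$ is applied to the manifestly nonnegative $|Lf_s|^2\geq\tfrac{9}{16}|A|^2$. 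Both arguments are sound and yield constants of comparable quality; the paper's $\sigma_s$-normalization avoids the case split and is slightly more streamlined, while your version makes explicit the competition between the dominant and mixed contributions and perhaps more directly exposes why a uniform $\mu$ suffices.
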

\begin{proof}
We shall prove \eqref{E:lamdelhessfs} only for the case when $s\geq 2$. By the same way we can prove it for
$s=1$. Let $p \in U$, $\mu\geq \mu_1$, and $0<\delta\leq \tilde \delta_{\mu}$, and let denote
$\sigma_s=\sigma_s(p,\mu,\delta)$ for simplicity. Since $\delta^{-1}\sigma_s^2 \geq 1$ by \eqref{E:sigmadelta},
it follows that
\begin{align}
\ddba\left(\frac{\eta |f_s|^2}{\delta}\right)(z)(L,\bar{L})&= \frac{\eta \sigma_s^2}{\delta}\left|\sum_{i=1}^s
\sigma_s^{-1}\pa{f_s}{z_i} t_i\right| \notag \\
&\geq \eta \left| \sum_{i=1}^{s} \sigma_s^{-1} \pa{f_s}{z_i}t_i \right|^2.\label{E:fssquare}
\end{align}
To estimate the right side in\eqref{E:fssquare} we consider
\begin{align}
\left|\sum_{i=1}^s \sigma_s^{-1}\pa{f_s}{z_i} t_i\right|^2 &= \left| \left(\sum_{i=1}^{s-1} \sigma_s^{-1}
\pa{f_s}{z_i}t_i \right)+ \sigma_s^{-1}\pa{f_s}{z_s} t_s\right|^2 \notag\\
& \geq 2 \Real \sum_{i=1}^{s-1} \left( \sigma_s^{-1} \pa{f_s}{z_i} t_i \right) \left(\overline{\sigma_s^{-1}
\pa{f_s}{z_s}t_s}\right) + \left|\sigma_s^{-1} \pa{f_s}{z_s}t_s \right|^2 .\label{E:fssqdiv}
\end{align}
Cauchy inequality gives us that  for $1\leq i \leq s$
\begin{equation}\label{E:realsigmasfs}
\left| 2 \Real \left(\sigma_s^{-1}\pa{f_s}{z_s} t_i \right)
\left( \overline{ \sigma_s^{-1} \pa{f_s}{z_s} t_s}
\right) \right| \leq 2n \left|\sigma_s^{-1} \pa{f_s}{z_i}t_i \right|^2 - \frac{1}{2n} \left|
\sigma_s^{-1}\pa{f_s}{z_s} t_s \right|^2.
\end{equation}
Hence, we combine  \eqref{E:fssquare}, \eqref{E:fssqdiv}, and \eqref{E:realsigmasfs} to obtain that 
\begin{align}
\ddba\left(\frac{\eta |f_s|^2}{\delta}\right)(z) (L,\bar L) &\geq
\eta \left(1-\frac{s}{2n} \right) \left|\sigma_s^{-1}\pa{f_s}{z_s} \right|^2|t_s|^2\notag \\
& \quad - \eta (2n) \sum_{i=1}^{s-1} \left|\sigma_s^{-1}\pa{f_s}{z_i} \right| |t_i|^2.\label{E:flamdel}
\end{align}
Note that $\frac{s}{2n} \leq \frac{1}{2}$ for $1\leq s \leq n$.  Proposition \ref{P:dominantpartial} and
\eqref{E:flamdel} imply that  if $\mu \geq \mu_1$ and $z$ satisfies \eqref{E:zcondlemma}, then 
\begin{align}
\ddba \left(\frac{\eta  |f_s|^2}{\delta}\right)(z) (L,\bar{L}) &\geq \frac{\eta}{2} D^2
\frac{|t_s|^2}{(a_s\tau_s)^2} \notag\\
& \:- \frac{\eta}{\mu^2} (2n) 2^{2(n+2)} a^N \sum_{i=1}^{s-1} \frac{|t_i|^2}{(a_i\tau_i)^2}. \label{E:etaddba}
\end{align}
Let us choose the values of $\eta$ and $\mu$ so that $\eta = 2(M+1)D^{-2}$ and $\mu  = \max \left\{\mu_1,\;
\left(n^3 2^{2n+5} \eta\right)^{\frac{1}{2}} \right\}$. Note that those numbers, $\eta$ and $\mu$, depend on $n$
and $N$. Since $\frac{\eta}{2} D^2 =M + 1$  and $\frac{\eta}{\mu^2} (2n) 2^{2(n+2)} \leq \frac{1}{4n^2}$,
 it follows from \eqref{E:etaddba} that we obtain \eqref{E:lamdelhessfs}.
\end{proof}

\begin{proof}[Proof of Theorem \ref{T:PluriCn}]
Let $\eta$ and $\mu$ be chosen in Lemma \ref{L:lamdelhessfs}. We at first prove that the following estimate holds
for any $s=1,\dots,n$: If 
$z \in R_{\mu,\delta}^s(p:a_1,\dots,a_s)$,
then 
\begin{equation}\label{E:induction}
\ddba G^p_{s,\mu,\eta,\delta}(L,\bar L)(z) \geq \sum_{i=1}^{s} \left(\frac{1}{2n} - \frac{s-i}{4n^2} \right)
\frac{|t_i|^2}{(a_i\tau_i)^2}.
\end{equation}
Since $\ddba \left(\frac{\eta|f_1|^2}{\delta} \right)(L,\bar L)(z) \geq 0$, it follows from \eqref{E:sGpdelta}
and \eqref{E:Hesschihalf} that 
\begin{equation*}
\ddba G^p_{1,\mu,\eta,\delta}(L,\bar L)(z) \geq \frac{1}{2n} \frac{|t_1|^2}{(a_1\tau_1)^2}, \quad |z_1-p_1| \leq
\frac{1}{2} a_1\tau_1.
\end{equation*}
If $\frac{1}{2}a_1\tau_1 \leq |z_1-p_1| \leq a_1\tau_1$, then we have
\begin{align*}
\ddba G^p_{1,\mu,\eta,\delta} (L,\bar L) (z) & = \ddba\left(\frac{\eta
|f_1|^2}{\delta}\right)(L,\bar L)  + \ddba \chi^p_{1,\mu,\delta}(L,\bar L)(z) \notag\\
&\geq (1+M) \frac{|t_1|^2}{(a_1\tau_1)^2} - M \frac{|t_1|^2}{(a_1\tau_1)^2} \notag \\
& \geq \frac{1}{2n} \frac{|t_1|^2}{(a_1\tau_1)^2}.
\end{align*}
In fact, the second inequality results from Lemma \ref{L:lamdelhessfs} and \eqref{E:Hesschifull}. 
Hence,  we showed  \eqref{E:induction} for $s=1$.

Let $1 \leq s \leq n-1$ and we assume inductively that \eqref{E:induction} holds for any $i$ with $1\leq i \leq
s$. 
It follows from \eqref{E:sGpdelta} that 
\begin{equation*}
G^p_{s+1,\mu,\eta,\delta}(z)=G_{s,\mu,\eta,\delta}^p(z) + \frac{\eta|f_{s+1}(z)|^2}{\delta} +
\chi^p_{s+1,\mu,\delta}(z).
\end{equation*}
Applying \eqref{E:induction} for $i$ with $1\leq i \leq s$, we obtain that if $z \in
R_{\mu,\delta}^{s}(p:a_1,\dots,a_s)$, then
\begin{align*}
\ddba G^p_{s+1,\mu,\eta,\delta}(L,\bar L)(z) &\geq \sum_{i=1}^s \left(\frac{1}{2n}-
\frac{s-i}{4n^2}\right)\frac{|t_i|^2}{(a_i\tau_i)^2}\notag\\
& \; +\ddba \left( \frac{\eta |f_{s+1}|^2}{\delta} \right)(L,\bar L) (z) + \ddba \chi^p_{s+1,\mu,\delta}(L, \bar
L)(z).
\end{align*}
Hence, \eqref{E:induction} holds for $s+1$ if $z \in R_{\mu,\delta}^{s}(p:a_1,\dots,a_s)$ and $0 \leq
|z_{s+1}-p_{s+1}| \leq \frac{1}{2} a_{s+1}\tau_{s+1}$. Here are details:  
\begin{align*}
\ddba G^p_{s+1,\mu,\eta,\delta}(L,\bar L) &\geq \sum_{i=1}^s \left(\frac{1}{2n}-
\frac{s-i}{4n^2}\right)\frac{|t_i|^2}{(a_i\tau_i)^2} +\frac{1}{2n}
\frac{|t_{n+1}|^2}{(a_{s+1}\tau_{s+1})^2} \notag \\
& \geq \sum_{i=1}^{s+1}\left(\frac{1}{2n}-\frac{s+1-i}{4n^2}\right) \frac{|t_i|^2}{(a_i\tau_i)^2}.
\end{align*}
In fact, the first inequality  follows from \eqref{E:Hesschihalf}. 
Furthermore, \eqref{E:induction} also holds for $s+1$ if $z \in R_{\mu,\delta}^{s}(p:a_1,\dots,a_s)$ and
$\frac{1}{2}a_{s+1}\tau_{s+1} \leq |z_{s+1}-p_{s+1}| \leq a_{s+1}\tau_{s+1}$. Here are details:  
\begin{align}
\ddba G^p_{s+1,\mu,\eta,\delta}(L,\bar L)(z) & \geq \sum_{i=1}^s \left(\frac{1}{2n}-
\frac{s-i}{4n^2}\right)\frac{|t_i|^2}{(a_i\tau_i)^2} \notag \\
& \quad +(M+1) \frac{|t_{s+1}|^2}{(a_{s+1}\tau_{s+1})^2} - \frac{1}{4n^2} \sum_{i=1}^s
\frac{|t_i|^2}{(a_i\tau_i)^2} \notag \\
& \quad - M \frac{|t_{s+1}|^2}{(a_{s+1}\tau_{s+1})^2}. \notag
\end{align}
In fact, the terms in the second line come from Lemma \ref{L:lamdelhessfs}, and the term in the third line
results from \eqref{E:Hesschifull}.

 Therefore, \eqref{E:induction} holds  for all $s=1,\dots,n$, by induction. In
particular, if $s=n$, then since $\frac{n-i}{4n^2} \leq \frac{1}{4n}$ for $1 \leq i \leq n$, we obtain
\eqref{E:desingular}.
\end{proof}

\begin{notation}
In the following we shall fix $\mu$ and $\eta$ chosen in Theorem \ref{T:PluriCn}. Let $p \in U$ and $0<\delta\leq
\tilde \delta_{\mu}$, and let $a_1,\dots,a_n$ be the constants constructed  for $p \in U$, $\mu$, and $\delta$ in
Lemma \ref{L:basicestprime}. For simplicity we shall omit $\mu$ and $\eta$ which appear in
$\tau_s(p,\mu,\delta)$, $\chi_{s,\mu,\delta}^p(z)$, $G^p_{\mu,\eta,\delta}(z)$, and $R_{\mu,\delta}(p:
a_1,\dots,a_n)$, to write
\[
\tau_s(p,\delta), \quad \chi^p_{s,\delta}(z), \quad G^p_{\delta}(z), \quad  R_{\delta}(p:a_1,\dots,a_n).
\]
\end{notation}
We now modify $G^p_{\delta}$ to construct a plurisubharmonic function with compact support in $\bar \Omega$ near
the boundary of $\Omega$. Let $z'=(z_1,\dots,z_n,z_{n+1})$ denote the coordinates of $\mathbb C^{n+1}$ and let
denote $L'_i= \pa{}{z_i} - \pa{r}{z_i} \left(\pa{r}{z_{n+1}} \right)^{-1}\pa{}{z_{n+1}}$ for $1\leq i \leq n$,
and let $L'_{n+1}  = \pa{}{z_{n+1}}$. Set  $L' =\sum_{i=1}^{n+1} t_i L'_i$, where $t_i \in \mathbb C$ for $1 \leq
i \leq n+1$.
Since $\pa{r}{z_{n+1}}=\frac{1}{2}$ and since $L'_i(r) \equiv 0$, it follows that $L'(r)=\frac{1}{2}t_{n+1}$ and
$|L'|^2 \approx \sum_{i=1}^{n+1}|t_i|^2$. 
Set
\begin{equation}
S(\delta)=\{z' \in \bar \Omega : -\delta \leq r(z') \leq 0 \}
\end{equation}
\begin{theorem}\label{T:localpluri}
There exist  small constants $c>d>0$, and a constant $C>0$, depending only on $n$ and $N$, so that the following
property holds: If $p \in U$ and $0 <\delta<\delta_{\mu}$, there exists a smooth plurisubharmonic function
$g_{p,\delta}$ in $\bar \Omega$ that satisfies 
\begin{itemize}
\item [\textup{(i)}] $0\leq g_{p,\delta}(z') \leq 1$, $z' \in \bar \Omega$, and $g_{p,\delta}$ is supported in
\begin{equation}\label{E:Suppgdelta}
S(c\delta) \cap \{z' \in \mathbb C^{n+1} : z \in R_{\delta}(p:a_1,\dots,a_n) \}
\end{equation}
\item [\textup{(ii)}] if $z'$ satisfies 
\begin{equation}\label{E:zprimesmallbox}
z' \in S(d\delta) \cap \{ z' \in \mathbb C^{n+1} : z \in R_{\delta}\left(p:\frac{a_1}{2}, \dots,
\frac{a_n}{2}\right) \}
\end{equation}
then 
\begin{equation}\label{E:gdeltaHess}
\ddba g_{p,\delta} (L',\bar L')(z') \geq C\left( \sum_{i=1}^n \frac{|t_i|^2}{\tau_i^2} +
\frac{|t_{n+1}|^2}{\delta^2} \right)
\end{equation}
\end{itemize}
\end{theorem}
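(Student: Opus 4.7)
The plan is to assemble $g_{p,\delta}$ from three plurisubharmonic ingredients---the function $G^p_\delta$ from Theorem \ref{T:PluriCn} (which supplies the tangential part of the Hessian estimate), an exponential $Ke^{r/\delta}$ (which supplies the missing $|t_{n+1}|^2/\delta^2$ contribution), and a pluriharmonic correction $\eta\,\Real z_{n+1}/\delta$ (which, via the identity $\sum_s |f_s|^2 = r - \Real z_{n+1}$, absorbs the $\delta^{-1}$-unbounded growth of $\eta\sum_s|f_s|^2/\delta$)---and then to truncate by composition with a smooth convex cut-off.

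Viewing $G^p_\delta(z)$ as a function on $\mathbb C^{n+1}$ independent of $z_{n+1}$ and noting that $L'_i$ reduces to $\partial/\partial z_i$ on such functions, Theorem \ref{T:PluriCn} immediately gives
\[
\ddba G^p_\delta(L',\bar L')(z') \geq \frac{1}{4n}\sum_{i=1}^n \frac{|t_i|^2}{(a_i\tau_i)^2}
\]
on $R_\delta(p:a_1,\ldots,a_n)$. Because $r$ is plurisubharmonic, so is $h := e^{r/\delta}$, and since $L'(r) = \tfrac12 t_{n+1}$ one computes
\[
\ddba h(L',\bar L') = \frac{h}{\delta}\ddba r(L',\bar L') + \frac{h}{4\delta^2}|t_{n+1}|^2 \geq \frac{e^{-c}}{4\delta^2}|t_{n+1}|^2 \text{ on } S(c\delta).
\]
Set $\tilde g(z') := G^p_\delta(z) + Kh(z') + \eta\,\Real z_{n+1}/\delta$ for a large constant $K$ depending only on $n, N$. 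Then $\tilde g$ is plurisubharmonic on $R_\delta(p:a_1,\ldots,a_n)$ (on which $G^p_\delta$ is), and on $\bar\Omega$ the substitution gives $\tilde g = \sum_s \chi^p_{s,\delta} + \eta r/\delta + Kh$. For $K$ large enough, the combined Hessian lower bound $\gtrsim \sum_i|t_i|^2/\tau_i^2 + |t_{n+1}|^2/\delta^2$ holds on the inner region $S(d\delta) \cap R_\delta(p:a_1/2,\ldots,a_n/2)$.

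The reformulation of $\tilde g$ on $\bar\Omega$ yields
\[
\tilde g \leq \tfrac{3n}{4} + K, \quad \tilde g \leq -\eta c + \tfrac{3n}{4} + Ke^{-c} \text{ if } r < -c\delta, \quad \tilde g \leq \tfrac{3(n-1)}{4} + K \text{ if } |z_s - p_s| > a_s\tau_s \text{ for some } s,
\]
while on the inner region $\tilde g \geq Ke^{-d} + \tfrac{3n}{4} - \tfrac12 - \eta d$. For $c > d > 0$ sufficiently small in terms of $n, N$, the inner-region minimum strictly exceeds both outside-support maxima, so a threshold $\alpha$ between them exists. Fix a smooth convex increasing $\Psi : \mathbb R \to [0,\infty)$ with $\Psi \equiv 0$ on $(-\infty,0]$ and $\Psi(t) > 0$ for $t > 0$, set $M := \Psi(\tfrac{3n}{4} + K - \alpha)$, and define $g_{p,\delta}(z') := M^{-1}\Psi(\tilde g(z') - \alpha)$. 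Because $\tilde g$ is psh inside $R_\delta$ and because $\Psi \equiv 0$ on $(-\infty,0]$ while $\tilde g - \alpha \leq 0$ outside $R_\delta \cap S(c\delta)$, the function $g_{p,\delta}$ is smooth, plurisubharmonic, bounded in $[0,1]$, and supported in $S(c\delta) \cap \{z \in R_\delta(p:a_1,\ldots,a_n)\}$; on the inner region $\Psi'(\tilde g - \alpha)$ is bounded below by a positive constant, so the Hessian bound in (ii) descends from that of $\tilde g$.

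The principal obstacle is the separation of $\tilde g$ on the inner region from its values on the complement of the support region: without the pluriharmonic correction, the term $\eta\sum_s|f_s|^2/\delta$ would grow like $\delta^{-1}$ and prevent any $\delta$-uniform choice of $\alpha$. The resolution uses the pointwise identity $\sum_s|f_s|^2 = r - \Real z_{n+1}$, which holds on all of $\mathbb C^{n+1}$, to convert the problematic term into the harmless $\eta r/\delta$ on $\bar\Omega$ once $\eta\,\Real z_{n+1}/\delta$ is added. This makes $\tilde g$ uniformly bounded above on $\bar\Omega$ by $\tfrac{3n}{4} + K$, and the cut-offs $\chi^p_{s,\delta}$ then provide a $\delta$-uniform gap of order $3/(4n)$ between $R_\delta$ and its complement. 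The remaining work is the quantitative verification that $c, d, K$, and $\alpha$ can all be chosen to depend only on $n$ and $N$.
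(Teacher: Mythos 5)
Your construction is correct and follows the same high-level strategy as the paper: build an auxiliary function on $\bar\Omega$ whose Hessian is bounded below on the small region, bounded above uniformly on $\bar\Omega$, and sub-threshold outside the desired support set, then truncate by composition with a smooth convex cut-off. The decomposition you use is, however, genuinely different from the paper's. The paper defines $\tilde G^p_\delta(z') = e^{4\eta r(z')/\delta} + \sum_s\chi^p_{s,\delta}(z)$ and lets the \emph{single} exponential do double duty: its Hessian $e^{4\eta r/\delta}\bigl[(4\eta/\delta)^2\,\partial r\wedge\bar\partial r + (4\eta/\delta)\,\partial\bar\partial r\bigr]$ supplies, on the strip where $e^{4\eta r/\delta}\ge\tfrac14$, both the $|t_{n+1}|^2/\delta^2$ contribution (from $\partial r\wedge\bar\partial r$) and the $\delta^{-1}\partial\bar\partial\sum|f_s|^2$ contribution needed to reconstitute the Hessian of $G^p_\delta$, while the exponential's uniform bound $e^{4\eta r/\delta}\le 1$ on $\bar\Omega$ handles boundedness for free. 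You instead keep $G^p_\delta$ intact, and separate the two roles: the pluriharmonic correction $\eta\,\Real z_{n+1}/\delta$ (using $\sum|f_s|^2 = r - \Real z_{n+1}$) converts the unbounded $\eta\sum|f_s|^2/\delta$ into the upper-bounded $\eta r/\delta$ on $\bar\Omega$ without touching the Hessian, and an independent exponential $Ke^{r/\delta}$ furnishes the normal-direction term. Your version makes the bookkeeping of the tangential Hessian slightly more transparent (no exponential prefactor to track on the strip), at the mild cost of introducing an extra free constant $K$ and requiring a compatibility check between $K$, $c$, $d$, and the threshold $\alpha$ --- which you correctly identify as the remaining quantitative step, and which does go through with uniform choices (e.g. $K=1$, $d$ small enough that $\eta d + (1-e^{-d}) < \tfrac14$, $c=d+\tfrac{1}{2\eta}$). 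Both routes give constants depending only on $n$ and $N$, and both rely on the same $\tfrac14$-gap from the cut-off functions $\chi$ and on Theorem \ref{T:PluriCn} as the sole input for the tangential directions.
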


\begin{lemma}\label{L:GdeltaHess}
Let 
\begin{equation}
\tilde G^p_{\delta}(z')=\exp\left(\frac{4\eta\; r(z')}{\delta}\right)+\sum_{s=1}^n \chi^p_{s,\delta}(z).
\end{equation}
If $z'$ satisfies 
\begin{equation}\label{E:Condizprime}
-\frac{\log 4}{4\eta}\delta \leq r(z') \leq 0, \quad \mbox{and} \quad z \in R_{\delta}(p:a_1,\dots, a_n),
\end{equation}
then 
\begin{equation}\label{E:ddbatildaG}
\ddba \tilde G^p_{\delta}(L',\bar L')(z') \geq \frac{1}{4n} \sum_{i=1}^n\frac{|t_i|^2}{(a_i\tau_i)^2} +(\eta
\delta^{-1})^2 |t_{n+1}|^2.
\end{equation}
\end{lemma}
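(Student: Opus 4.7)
The plan is to apply the identity $\ddba(e^u) = e^u(\ddba u + \dee u \wedge \dba u)$ with $u = 4\eta\, r/\delta$, and evaluate on $(L', \bar L')$. The key observation, by the very construction of $L'_1,\dots,L'_n$, is that $L'_i(r)\equiv 0$ for $1 \leq i \leq n$ while $L'_{n+1}(r) = \tfrac{1}{2}$. Hence $\dee r(L') = \tfrac{1}{2}\, t_{n+1}$, so
\[
|\dee u(L')|^2 = \Bigl(\frac{4\eta}{\delta}\Bigr)^2 \cdot \frac{|t_{n+1}|^2}{4} = \frac{4\eta^2}{\delta^2}\,|t_{n+1}|^2.
\]
This is the sole source of the $(\eta \delta^{-1})^2 |t_{n+1}|^2$ contribution in \eqref{E:ddbatildaG}.

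For the Levi-form piece, since $r = \Real z_{n+1} + \sum_s |f_s|^2$ and each $f_s$ depends only on $z_1,\dots,z_n$, we have $\ddba r = \sum_{s=1}^n \dee f_s \wedge \dba \bar f_s$. Thus
\[
\ddba r(L', \bar L') = \sum_{s=1}^n \Bigl|\sum_{i=1}^n t_i \,\pa{f_s}{z_i}(z)\Bigr|^2 = \ddba\Bigl(\sum_{s=1}^n|f_s|^2\Bigr)(L,\bar L)(z),
\]
where $L = \sum_{i=1}^n t_i\, \partial/\partial z_i$ is the \emph{horizontal part} of $L'$. Under the hypothesis $r(z') \geq -\tfrac{\log 4}{4\eta}\delta$ one has $u \geq -\log 4$, hence $e^u \geq \tfrac{1}{4}$, so combining the two contributions produces
\[
\ddba\bigl(e^{4\eta r/\delta}\bigr)(L', \bar L')(z') \geq \ddba\Bigl(\frac{\eta}{\delta}\sum_{s=1}^n |f_s|^2\Bigr)(L,\bar L)(z) + \Bigl(\frac{\eta}{\delta}\Bigr)^2 |t_{n+1}|^2.
\]

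Finally, each cutoff $\chi^p_{s,\delta}$ depends only on $z_s$, so its Hessian is a pure $dz_s \wedge d\bar z_s$ term and evaluates identically on $L'$ and on $L$. Adding these contributions gives
\[
\ddba \tilde G^p_\delta(L', \bar L')(z') \geq \ddba G^p_{\mu,\eta,\delta}(L,\bar L)(z) + \Bigl(\frac{\eta}{\delta}\Bigr)^2|t_{n+1}|^2,
\]
and since the second part of the hypothesis \eqref{E:Condizprime} places $z \in R_{\mu,\delta}(p:a_1,\dots,a_n)$, Theorem \ref{T:PluriCn} bounds the first term from below by $\tfrac{1}{4n}\sum_i |t_i|^2/(a_i\tau_i)^2$, yielding \eqref{E:ddbatildaG}. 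No substantive obstacle arises; the elegance of the construction is precisely that the single exponential $e^{4\eta r/\delta}$ reproduces the horizontal Hessian bound of Theorem \ref{T:PluriCn} via $e^u \ddba u$ while simultaneously supplying the transverse $|t_{n+1}|^2$ control via $|\dee u(L')|^2$.
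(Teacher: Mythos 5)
Your proof is correct and mirrors the paper's argument: both expand $\ddba(e^{4\eta r/\delta})$ via $\ddba(e^u)=e^u(\ddba u+\dee u\wedge\dba u)$, isolate the transverse $|t_{n+1}|^2$ term from $|\dee r(L')|^2=\tfrac14|t_{n+1}|^2$, identify $\ddba r(L',\bar L')$ with the horizontal Levi form $\ddba\left(\sum_s|f_s|^2\right)(L,\bar L)$, use $e^{4\eta r/\delta}\geq\tfrac14$ on the strip, and reconstruct $\ddba G^p_\delta(L,\bar L)$ to invoke Theorem \ref{T:PluriCn}. No meaningful divergence from the paper's proof.
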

\begin{proof}
Note that 
\begin{align*}
\ddba \tilde G^p_{\delta}(L',\bar L')(z') &=e^{4\eta \;\delta^{-1}\; r} \left[
\left(\frac{4\eta}{\delta}\right)^2 \dee r \wedge \dba r
+ \frac{4\eta}{\delta} \ddba r\right] (L',\bar L')(z') \notag \\
& \; + \ddba \left( \sum_{s=1}^n \chi^p_{s,\delta} \right)(L,\bar L)(z).
\end{align*}
Since $(\dee r \wedge \dba r) (L',\bar L')=\frac{1}{4}|t_{n+1}|^2$ and $\ddba r = \ddba ( \sum_{s=1}^{n} |f_s|^2
)$, and since  $\frac{1}{4} \leq e^{4\eta\; \delta^{-1}\;r(z')} \leq 1$  for $z'$ satisfying
\eqref{E:Condizprime}, it follows that 
\begin{align*}
\ddba \tilde G^p_{\delta}(L',\bar L')(z') &\geq (\eta \delta^{-1})^2|t_{n+1}|^2 \notag \\
&\;+ \ddba \left(\eta \delta^{-1}\sum_{s=1}^n|f_s|^2 +\sum_{i=1}^n \chi_{i,\delta}\right)(L,\bar L)(z)\notag \\
&= (\eta \delta^{-1})^2|t_{n+1}|^2 + \ddba G_{\delta}(L,\bar L) (z).
\end{align*}
Since  $z \in R_{\delta}(p:a_1,\dots,a_n)$,  we obtain \eqref{E:ddbatildaG} by Theorem \ref{T:PluriCn}.
\end{proof}

\begin{proof}[Proof of Theorem \ref{T:localpluri}]
Let $P$ be a convex increasing function such that $P(t)=0$ for $t \leq \frac{1}{4}$, $P(t)>0$ for $t>
\frac{1}{4}$, and $P(1)=1$. Let $g_{p,\delta}$ be the function defined  by 
\begin{equation*}
g_{p,\delta}(z')=P\left(\tilde G^p_{\delta}(z') -\frac{3}{4}n \right).
\end{equation*}
Let us choose $c>0$ and $d>0$ as $c=\frac{\log 4}{4\eta}$ and $d=\frac{\log 6-\log 5}{4\eta}$, where $\eta$ is
fixed as in Theorem \ref{T:PluriCn}. Since $\eta$ depends only on $n$ and $N$, it follows that $c$ and $d$ depend
only on $n$ and $N$. If $z' \in \bar \Omega$ and $|z_i| > a_i \tau_i$ for some $i$, then $g_{p,\delta}(z')=0$. In
fact, if $z' \in \bar \Omega$ and $|z_i| > a_i \tau_i$ for some $i$, then
\begin{align*}
\tilde G^p_{\delta}(z') - \frac{3}{4} n&\leq 1+(n-1)\max\{\chi(t):t\geq 0\} -\frac{3}{4}n \notag\\
& =1+(n-1)\frac{3}{4}-\frac{3}{4}n=\frac{1}{4}.
\end{align*}
Similarly, if $r(z') < -c\delta$, then $g_{p,\delta}(z')=0$. In fact, since $e^{4\eta\delta^{-1}}r(z') \leq
\frac{1}{4}$, it follows that $\tilde G^p_{\delta}(z') - \frac{3}{4}n \leq \frac{1}{4} + n \max\{ \chi(t): t\geq
0\} -\frac{3}{4}n = \frac{1}{4}$. Combining the previous two results, we obtain that the support of
$g_{p,\delta}$ is contained in the set described in \eqref{E:Suppgdelta}. Furthermore, if $r(z')\leq 0$, then $
G_{\delta}^p(z')-\frac{3}{4}n \leq 1 + n \max\{\chi(t) : t\geq 0\} -\frac{3}{4} =1$. Since $P(t)$ is increasing
and $P(1)=1$, it follows that $0\leq g_{p,\delta}(z') \leq 1$ for $z' \in \bar \Omega$. Therefore, we showed the
first part of the theorem.

Now we want to show $g_{p,\delta}$ is a plurisubharmonic function in $\bar \Omega$. Note that $P$ is convex increasing and
that if $z' \in \bar \Omega$ satisfies $g_{p.\delta}\ne 0$ then $z'$ is contained the set in \eqref{E:Suppgdelta}. Hence,
it follows from Lemma \ref{L:GdeltaHess} that $g_{p,\delta}$ is plurisubharmonic in $\bar \Omega$.

To prove the second part of the theorem, we consider the following: if  $z' \in \bar \Omega$ satisfies
\eqref{E:zprimesmallbox}, then since $e^{4\eta\delta^{-1}}r(z') \geq \frac{5}{6}$, we have 
$\tilde G^p_{\delta}(z') -\frac{1}{4}n \geq \frac{1}{3} >\frac{1}{4}$. 
Since $P'(t)\geq C'$, $t\geq \frac{1}{3}$ for
some constant $C'>0$, Lemma \ref{L:GdeltaHess} implies that if $z'$ satisfies \eqref{E:zprimesmallbox} then
\begin{equation}
\ddba g_{p,\delta} (L',L')(z') \geq \frac{C'}{4n} \sum_{i=1}^n \frac{|t_i|^2}{(a_i\tau_i)^2} +C'(\eta \delta^{-1})^2
|t_{n+1}|^2.
\end{equation}
Note that $\eta$ depends only on $n$ and $N$ and that $a_1,\dots,a_n$ are bounded below by a constant depending
only on $n$ and $N$. Therefore, \eqref{E:gdeltaHess} holds.
\end{proof}
\section{Plurisubharmonic Functions in Strips}\label{S:instrips}

In this section we will construct bounded plurisubharmonic functions with large Hessian near the boundary. Recall
the constant $d$ with $0 < d <\frac{1}{2}$  in Lemma \ref{L:CompareCoeff}, whose size depends only on $n$ and
$N$. We also recall  the constants, $a_1,\dots,a_n$, in Lemma \ref{L:basicestprime}, corresponding to each $p\in
U$ and $\delta$ with $0<\delta\leq \delta_{\mu}$,  such that $da^{(N+1)(2N+1)}\leq a_s \leq da^N$, $1\leq s\leq
n$, where $a$ is a fixed constant with $0<a<\frac{1}{8}$.
\begin{definition}
For each $p\in U$ and $\delta$ with $0<\delta \leq \delta_0$ define
\begin{equation}
\hat \tau_s(p,\delta)=a_s\tau_s(p,\delta), \quad 1 \leq s \leq n,
\end{equation}
and
\begin{equation}
\hat R_{\delta}(p) = R_{\delta}(p:a_1,\dots,a_n).
\end{equation}
\end{definition}
\begin{remark}
Since $a_s \leq da^N <\frac{1}{8}d$, it follows that
\begin{equation}\label{E:hatincludetilde}
\hat R_{\delta}(p) \subset \tilde R_{\delta}(p)=R_{\delta}(p:d,\dots,d).
\end{equation}
Furthermore, since  $a_s$ are bounded above and below by uniform constants, depending only on $n$ and $N$, it
follows that
\begin{equation}\label{E:Comparehat}
\tau_s(p,\delta) \approx \tilde \tau_s(p,\delta) \approx \hat \tau_s(p,\delta), \quad 1 \leq s \leq n.
\end{equation}
\end{remark}
Since the value of  $\mu$ was fixed in Theorem \ref{T:PluriCn}, we simply write $ J_s(p,\delta)$ and
$K_s(p,\delta)$ for $J_s(p,\mu,\delta)$ and $K_s(p,\mu,\delta)$, respectively, for $p \in U$,  $0<\delta\leq
\tilde\delta_{\mu}$, and $1 \leq s \leq n$.  Let $\mathcal T(p,\delta)$ be the collection of the dominant and
mixed types at $p$ with respect to $\delta$, that is,
\begin{equation}
\mathcal T(p,\delta)= \{ J_s(p,\delta), K_s(p,\delta) : p \in U, 0 <\delta \leq \tilde \delta_{\mu}, 1 \leq s
\leq n\}.
\end{equation}
It follows from Proposition \ref{P:FinitePure} and \ref{P:FiniteMixed} that  $\mathfrak T= \{\mathcal T(p,\delta)
: p \in U, 0 <\delta \leq \tilde \delta_{\mu} \}$ is finite. Fix any $\mathcal T \in \mathfrak T$ and $\delta$
with $0 <\delta \leq \tilde \delta_{\mu}$, and define $U_{\mathcal T, \delta}$ by
\begin{equation*}
U_{\mathcal T,\delta}=\{p \in U : \mathcal T(p,\delta)=\mathcal T \}.
\end{equation*}
We now follow a similar argument used  in \cite{Ca87} and \cite{Ca89}. It follows from \eqref{E:lutaus} that
$\tau_s(p,\delta) \geq B_s^{-1}\delta^{\frac{1}{2}}$, $1 \leq s \leq n$, where $B_s$ is independent of $p$ and
$\delta$. Since we chose $U$ as a bounded set in Section \ref{S:regularcoordinate}, it follows from
\eqref{E:Comparehat} that there exists a selection of $p_k \in U_{\mathcal T,\delta}$, $k=1,\dots,N'$, where $N'$
depends on $\delta$ and $\mathcal T$, so that
\begin{equation*}
U_{\mathcal T,\delta} \subset \bigcup_{k=1}^{N'} \frac{1}{2} \hat R_{\delta}(p_k),
\end{equation*}
and
\begin{equation}\label{E:pknotin14}
p_k \notin \frac{1}{4} \hat R_{\delta}(p_j), \quad j=1, \dots, k-1, \quad k=1,\dots,N'.
\end{equation}
\begin{lemma}\label{L:finitecover}
Let $p$ be any point in $\{p_k : k=1,\dots,N'\}$ and set
\begin{equation*}
E=\{j:\hat R_{\delta}(p) \cap \hat R_{\delta}(p_j)\ne \emptyset, j=1,\dots, N' \}.
\end{equation*}
Then there exists an integer $M_{\mathcal T}$, depending only on $\mathcal T$, independent of $p$ and $\delta$,
so that
\begin{equation*}
\# (E) \leq M_{\mathcal T}.
\end{equation*}
\end{lemma}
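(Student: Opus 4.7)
The plan is to use the common type $\mathcal T$ shared by $p$ and every $p_j$ with $j\in E$ to compare the sizes of the polydiscs $\hat R_{\delta}(p_j)$, then exhibit pairwise disjoint shrunken copies, and finally apply a volume count. Throughout, all constants that arise will depend only on $\mathcal T$ (and on the fixed parameters $n$, $N$, $\mu$, $\eta$), and not on $\delta$ or the individual points.

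First I would note that, because $p$ is itself one of the $p_k$'s, both $p$ and each $p_j$ with $j\in E$ lie in $U_{\mathcal T,\delta}$, so they share the same dominant and mixed types. Since $\hat R_{\delta}\subset \tilde R_{\delta}$ by \eqref{E:hatincludetilde}, the assumption $\hat R_{\delta}(p)\cap \hat R_{\delta}(p_j)\neq\emptyset$ implies $\tilde R_{\delta}(p)\cap\tilde R_{\delta}(p_j)\neq\emptyset$, and Proposition \ref{P:compareinhat} yields a constant $C_{\mathcal T}\geq 1$, depending only on $\mathcal T$, such that
\[
C_{\mathcal T}^{-1}\tau_s(p,\delta)\leq \tau_s(p_j,\delta)\leq C_{\mathcal T}\tau_s(p,\delta),\qquad 1\leq s\leq n,\ j\in E.
\]
In particular, for any two indices $j,k\in E$ one has $\tau_s(p_k,\delta)\leq C_{\mathcal T}^{2}\tau_s(p_j,\delta)$ for every $s$.

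Next, let $c_{\mathcal T}=\frac{1}{8(1+C_{\mathcal T}^{2})}$ and consider the shrunken polydiscs $c_{\mathcal T}\hat R_{\delta}(p_j)$ for $j\in E$. I would show these are pairwise disjoint. Given $j<k$ in $E$, suppose some $z$ lay in both. Then for every $s$,
\[
|p_{j,s}-p_{k,s}|\leq c_{\mathcal T}a_s\bigl(\tau_s(p_j,\delta)+\tau_s(p_k,\delta)\bigr)\leq c_{\mathcal T}a_s(1+C_{\mathcal T}^{2})\tau_s(p_j,\delta)<\tfrac{1}{4}a_s\tau_s(p_j,\delta),
\]
by the choice of $c_{\mathcal T}$. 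This says $p_k\in \tfrac{1}{4}\hat R_{\delta}(p_j)$, contradicting \eqref{E:pknotin14}. Hence the polydiscs $\{c_{\mathcal T}\hat R_{\delta}(p_j)\}_{j\in E}$ are pairwise disjoint.

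Finally, I would carry out a volume count. For any $j\in E$, picking a point in $\hat R_{\delta}(p)\cap\hat R_{\delta}(p_j)$ and using $\tau_s(p_j,\delta)\leq C_{\mathcal T}\tau_s(p,\delta)$ gives $|p_{j,s}-p_s|\leq 2a_s C_{\mathcal T}\tau_s(p,\delta)$, so every $\hat R_{\delta}(p_j)$ with $j\in E$ is contained in the polydisc
$\{z:|z_s-p_s|\leq (1+2C_{\mathcal T})a_s\tau_s(p,\delta)\}$, whose Lebesgue measure is $\prod_s\bigl(\pi(1+2C_{\mathcal T})^{2}a_s^{2}\tau_s(p,\delta)^{2}\bigr)$. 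On the other hand, each disjoint piece $c_{\mathcal T}\hat R_{\delta}(p_j)$ has measure at least $\prod_s\bigl(\pi c_{\mathcal T}^{2}a_s^{2}C_{\mathcal T}^{-2}\tau_s(p,\delta)^{2}\bigr)$. Dividing gives
\[
\#(E)\leq \left(\frac{(1+2C_{\mathcal T})C_{\mathcal T}}{c_{\mathcal T}}\right)^{2n}=:M_{\mathcal T},
\]
which depends only on $\mathcal T$. The main obstacle is purely conceptual: ensuring that the comparability constants from Proposition \ref{P:compareinhat}, and hence $C_{\mathcal T}$ and $c_{\mathcal T}$, really depend only on $\mathcal T$. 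This is transparent from the proof of that proposition, where the exponents appearing are precisely the entries of $\mathcal T$, bounded by $m_1\cdots m_n$.
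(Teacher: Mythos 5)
Your argument follows the same route as the paper: comparability of the $\tau_s$ scales from Proposition \ref{P:compareinhat}, pairwise disjointness of shrunken polydiscs forced by the separation condition \eqref{E:pknotin14}, and a volume count. There is one place where you are technically loose: you write a single symbol $a_s$ throughout the disjointness computation, but the constants $a_s$ of Lemma \ref{L:basicestprime} depend on the base point, so $\hat R_\delta(p_j)$ and $\hat R_\delta(p_k)$ are built with a priori different $a_s$, and to conclude $p_k\in\tfrac{1}{4}\hat R_\delta(p_j)$ the final inequality in your chain must close against the $a_s$ belonging to $p_j$ specifically. This is harmless, since all the $a_s$ lie in the fixed interval $[\,d\,a^{(N+1)(2N+1)},\, d\,a^{N}\,]$ (so the ratio of any two is bounded by a constant depending only on $n$ and $N$), but you need to fold that ratio into the choice of $c_{\mathcal T}$. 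The paper sidesteps this bookkeeping by introducing the auxiliary polydiscs $P_\delta(p_j)$ with radii $d\,\hat\tau_s(p,\delta)$ measured at the single fixed center $p$, so that all the comparison boxes literally have identical side lengths before disjointness is checked; aside from that choice, the two proofs coincide. One small overstatement in your closing remark: the comparability constant in Proposition \ref{P:compareinhat} is in fact uniform over $p$, $\mu$, $\delta$ (not merely over a fixed type $\mathcal T$), so $C_{\mathcal T}$ is actually $\mathcal T$-independent; this is only to your advantage.
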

\begin{proof}
Suppose that $j \in E$. Since $\hat R_{\delta}(p) \subset \tilde R_{\delta}(p)$ and $\hat R_{\delta}(p_j) \subset
\tilde R_{\delta}(p_j)$ by \eqref{E:hatincludetilde} and since we assumed that $\mathcal T(p,\delta)=\mathcal
T(p_j,\delta)=\mathcal T$, it follows from Proposition \ref{P:compareinhat} that if $\hat R_{\delta}(p) \cap \hat
R_{\delta}(p_j) \ne \emptyset$, then
\begin{equation*}
\tau_s(p,\delta) \approx \tau_s(p_j,\delta), \quad 1 \leq s \leq n.
\end{equation*}
Since $\tau_s(p,\delta) \approx \hat \tau_s(p,\delta)$ and $\tau_s(p_j,\delta) \approx \hat \tau_s(p_j,\delta)$
by \eqref{E:Comparehat}, it follows that there exists a large constant $D$, independent of $p$ and $\delta$ such
that for $j \in E$,
\begin{equation*}
\hat R_{\delta}(p_j) \subset D\hat R_{\delta}(p).
\end{equation*}

Since $\hat \tau_s(p,\delta) \approx \hat \tau_s(p_j,\delta)$, $j\in E$, $1\leq s \leq n$, there exists a small
constant $d>0$, independent of $p$ and $\delta$, such that each polydisc $\frac{1}{4}\hat R_{\delta}(p_j)$, $j
\in E$, contains a polydisc $P_{\delta}(p_j)$, centered at $p_j$, defined by
\begin{equation}
P_{\delta}(p_j)=\{z \in \mathbb C^n: |z_s-(p_j)_s|\leq d\hat \tau_s(p,\delta)\},
\end{equation}
where $p_j=((p_j)_1,\dots,(p_j)_n)$. We now choose any pair of $j, k \in E$ with $k>j$.  Since $p_k \notin
\frac{1}{4} \hat R_{\delta}(p_j)$  and  the $s$-th sides of $P_{\delta}(p_j)$ and $P_{\delta}(p_k)$ are equal to
$d\hat \tau_s(p,\delta)$, we can shrink $d$, independent of $p$ and $\delta$, so that $P_{\delta}(p_j) \cap
P_{\delta}(p_k) = \emptyset$. Since the volume of $P_{\delta}(p_j)$ equals the volume of $d\hat R_{\delta}(p)$
and since the volumes of $D\hat R_{\delta}(p)$ and $d\hat R_{\delta}(p)$ are equal up to a constant, independent
of $p$ and $\delta$, it guarantees that there exists an integer $M_{\mathcal T}$, independent of $p$ and $\delta$
such that $\# (E) \leq M_{\mathcal T}$.
\end{proof}
Define $\lambda_{\mathcal T, \delta}(z')=\sum_{j=1}^N g_{p_j,\delta}(z')$ for each $\mathcal T \in \mathfrak T$
and $\delta$ with $0<\delta \leq \tilde \delta_{\mu}$. Let us denote
\begin{equation}
t_{\mathcal T,\delta}=\sup\{\tau_s(p,\delta) : p \in U_{\mathcal T,\delta}, \;s=1,\dots,n \},
\end{equation}
and
\begin{equation}
t_{\delta} = \sup \{\tau_s(p,\delta) : p \in U, \; s=1,\dots,n\}.
\end{equation}
It follows from Theorem \ref{T:localpluri} and Lemma \ref{L:finitecover} that  $\lambda_{\mathcal T,\delta}(z')$
a well-defined plurisubharmonic functions in $\bar \Omega$ for each $\delta$ and $\mathcal T$,  such that
\begin{itemize}
\item [\textup{(i)}] $0 \leq \lambda_{\mathcal T,\delta}(z')  \leq M_{\mathcal T}$, $z' \in \bar \Omega$, and
$\Supp( \lambda_{\mathcal T,\delta}) \subset S(c\delta)$, 
\item [\textup{(ii)}] if $z' \in S(d\delta) \cap \left\{ z' \in \mathbb C^{n+1} : z \in U_{\mathcal T} \right\}$
then 
\begin{equation*}
\ddba \lambda_{\mathcal T,\delta} (L',\bar L')(z') \geq M_{\mathcal T} C\frac{1}{\left(t_{\mathcal
T,\delta}\right)^2}\;|L'|^2,
\end{equation*}
\end{itemize}
where the constants, $c$,  $d$ and $C$, are constructed in Theorem \ref{T:localpluri}. We define $
\lambda_{\delta}(z')= \sum_{\mathcal T\in \mathfrak T} \lambda_{\mathcal T,\delta}(z')$ for each  $\delta$ with
$0<\delta\leq \tilde \delta_{\mu}$.  Since $M_{\mathcal T}$ and $C$ are independent of $z'$ and $\delta$ and
since $\mathfrak T$ is finite, it follows that $\lambda_{\delta}$ is a well-defined smooth plurisubharmonic
function in $\bar \Omega$ such that
\begin{itemize}
\item [(i)] $0 \leq \lambda_{\delta}(z') \lesssim 1$, $z' \in \bar \Omega$,

\item [(ii] if $z' \in S(d\delta) \cap \{(z,z_{n+1}) : z \in U \}$, then
$\ddba \lambda_{\delta} (L',\bar L')(z') \gtrsim \frac{1}{(t_{\delta})^2}\;|L'|^2$.
\end{itemize}
Since by Remark \ref{R:UniSmall}, $\tau_s(p,\delta) \lesssim \delta^{\frac{1}{2m_1\dots m_s}}$, $1\leq s \leq n$,
uniformly in $p\in U$ and $\delta$ with $0<\delta\leq \tilde \delta_{\mu}$,  we obtain the following theorem.
\begin{theorem}\label{T:main}
Let $\Omega$ be a regular coordinate domain at the origin in $\mathbb C^{n+1}$  with \eqref{E:amultiplicity} .
Then a subelliptic estimate holds at the origin of order $\epsilon = \frac{1}{2m_1\dots m_n}$.
\end{theorem}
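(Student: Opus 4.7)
The plan is to deduce the theorem from the criterion of \cite{Ca87}: a subelliptic estimate of order $\epsilon$ at the origin follows as soon as one exhibits a family of bounded $C^2$ plurisubharmonic functions $\{\lambda_\delta\}$ on $\bar\Omega$, supported in a fixed neighborhood, with
\[
\ddba \lambda_\delta(L',\bar L')(z') \gtrsim \delta^{-2\epsilon}|L'|^2, \quad z' \in U \cap S(\delta).
\]
The $\lambda_\delta$ assembled in the discussion immediately preceding the statement already has the right shape; the remaining task is to convert the bound $t_\delta^{-2}$ obtained there into the explicit power of $\delta$ required with $\epsilon = 1/(2m_1\cdots m_n)$.

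First I would invoke Remark \ref{R:UniSmall} uniformly in $p\in U$: since $\tau_s(p,\delta) \leq M_{\mu,s}\,\delta^{1/(2m_1\cdots m_s)}$ and $\delta < 1$, the maximum over $s = 1,\dots,n$ is attained (up to constants) at $s = n$, so
\[
t_\delta \;=\; \sup_{p \in U,\, 1 \leq s \leq n} \tau_s(p,\delta) \;\lesssim\; \delta^{1/(2m_1\cdots m_n)}.
\]
Hence $t_\delta^{-2} \gtrsim \delta^{-1/(m_1\cdots m_n)} = \delta^{-2\epsilon}$. Combining this with property (ii) of $\lambda_\delta$ yields the Hessian lower bound on $S(d\delta)\cap\{z\in U\}$; reparameterising $\delta\mapsto d\delta$ absorbs the universal constant $d$ and puts the estimate in the form demanded by \cite{Ca87}, which then delivers \eqref{E:SubEllipEst} at the origin.

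The substance of the proof is therefore not located in Theorem \ref{T:main} itself but in the uniform $L^\infty$-bound on $\lambda_\delta$, and that in turn rests on the type-decomposition carried out earlier. The hard part — already resolved — is that the polydiscs $R_\delta(p)$ are not stable: two overlapping boxes can have very different $\tau_s$'s. The cure is to stratify $U$ by the integer invariants $\mathcal T(p,\delta)$, which by Propositions \ref{P:FinitePure} and \ref{P:FiniteMixed} assume only finitely many values $\mathfrak T$; Proposition \ref{P:compareinhat} restores stability on each $U_{\mathcal T,\delta}$, Lemma \ref{L:finitecover} supplies a uniform overlap bound $M_{\mathcal T}$ on the Besicovitch-type cover, and the finiteness of $\mathfrak T$ lets one add $\lambda_\delta = \sum_{\mathcal T\in\mathfrak T}\lambda_{\mathcal T,\delta}$ without destroying uniformity in $\delta$.

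In summary, the proof of the theorem is essentially a one-line bookkeeping step: combine the Hessian estimate $\gtrsim t_\delta^{-2}|L'|^2$ with the uniform approximate-system bound $t_\delta \lesssim \delta^{1/(2m_1\cdots m_n)}$ to obtain the $\delta^{-2\epsilon}$-Hessian, and then quote the sufficient condition of \cite{Ca87}. No new analytic ingredient is required beyond what has been developed in Sections \ref{S:regularcoordinate}--\ref{S:instrips}.
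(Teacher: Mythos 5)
Your argument is correct and follows the paper's own route exactly: the Hessian bound $\ddba\lambda_\delta \gtrsim t_\delta^{-2}|L'|^2$ from the covering construction, combined with the uniform bound $t_\delta \lesssim \delta^{1/(2m_1\cdots m_n)}$ from Remark \ref{R:UniSmall}, is precisely what the paper invokes to close the proof via the sufficient condition of \cite{Ca87}. The additional bookkeeping you spell out (monotonicity of the exponents in $s$, absorbing the constant $d$ by reparameterizing $\delta$) is implicit in the paper and correctly filled in.
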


\begin{remark}
Since the construction of $\tau_s(p,\delta)$ is closely related to the disc type, introduced in \cite{Ca83}, we
expect that the $\epsilon$ in Theorem \ref{T:main} is the sharpest subelliptic gain on a regular coordinate
domain.
\end{remark}

Since we use the inequality in \eqref{E:desingular} to construct plurisubharmonic functions, and since other
processes do not cause any difference, we obtain monotonicity for subelliptic estimates. A similar argument
appears in \cite{Cho08m}.

\begin{corollary}
Let $\Omega$ be a regular coordinate domain at the origin in $\mathbb C^{n+1}$ with \eqref{E:amultiplicity}. Let
$\rho(z,\bar z)$ be a plurisubharmonic function near the origin in $\mathbb C^n$, and let $\Omega'$ denote a
pseudoconvex domain  defined near the origin in $\mathbb C^{n+1}$ by
\begin{equation*}
\Omega'= \{z' \in \mathbb C^{n+1} : \Real z_{n+1}+\rho(z,\bar z)<0\}.
\end{equation*}
If there is a neighborhood $U$ of the origin in $\mathbb C^n$ so that the complex Hessian of $\rho(z,\bar z)$ is
bigger than the one of $\sum_{s=1}^n|f_s(z)|^2$ on $U$, then a subelliptic estimate holds for $\Omega'$ of order
$\epsilon$, where $\epsilon$ is obtained in Theorem \ref{T:main}.
\end{corollary}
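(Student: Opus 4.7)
The plan is to recycle the entire machinery of Sections \ref{S:regularcoordinate}--\ref{S:instrips} applied to the functions $f_1,\dots,f_n$ defining $\Omega$, producing the approximate systems $\tau_s(p,\delta)$, the type invariants $\mathcal{T}(p,\delta)$, the cutoff functions $\chi^p_{s,\delta}(z)$, the boxes $\hat R_{\delta}(p)$, and the covering selection $\{p_k\}$ exactly as constructed there. Everything combinatorial about the covering depends only on the $f_s$, not on the sum $\sum |f_s|^2$ itself, so this data is available to us unchanged. The only modification is inside the plurisubharmonic bumps: replace the defining function $r=\Real z_{n+1}+\sum_{s=1}^n |f_s|^2$ by $r'=\Real z_{n+1}+\rho$, and replace $\eta\delta^{-1}\sum_{s=1}^n |f_s|^2$ in $G^p_{\mu,\eta,\delta}$ by $\eta\delta^{-1}\rho$, leaving the cutoff sum $\sum_s \chi^p_{s,\delta}$ untouched.

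The key step is the analogue of Theorem \ref{T:PluriCn}: on $R_{\mu,\delta}(p:a_1,\dots,a_n)$ one needs
\[
\ddba\!\left(\frac{\eta\rho}{\delta}+\sum_{s=1}^n \chi^p_{s,\delta}\right)\!(L,\bar L)(z)\;\geq\;\frac{1}{4n}\sum_{i=1}^n \frac{|t_i|^2}{(a_i\tau_i)^2}.
\]
This is immediate from the hypothesis $\ddba\rho\geq \ddba\sum_s |f_s|^2$ pointwise on $U$: subtracting $\ddba\sum_s\chi^p_{s,\delta}$ from the desired inequality, monotonicity of $\ddba$ reduces it to Theorem \ref{T:PluriCn} itself. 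Next, the analogue of Lemma \ref{L:GdeltaHess} with $\tilde G^{\prime\,p}_{\delta}(z')=\exp(4\eta r'(z')/\delta)+\sum_s \chi^p_{s,\delta}(z)$ goes through verbatim, because the $L'_i$ defined relative to $r'$ still satisfy $L'_i(r')=0$ and $\partial r'/\partial z_{n+1}=\tfrac12$, so the $(\dee r'\wedge\dba r')$ term contributes exactly $|t_{n+1}|^2/4$, while the $\ddba r'=\ddba\rho$ term dominates the original $\ddba\sum_s |f_s|^2$ term by assumption.

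With these two facts in hand, the proof of Theorem \ref{T:localpluri} applies word-for-word: the convex composition $P$, the support estimates, and the smaller-box Hessian bound only invoke the size of $r'$ on $\bar\Omega'$, the sup-norm of $\chi$, and the two inequalities above; the resulting constants $c,d,C$ depend only on $n$ and $N$. The Section \ref{S:instrips} patching then produces $\lambda'_\delta$ on $\bar\Omega'$ with $0\leq\lambda'_\delta\lesssim 1$ and $\ddba\lambda'_\delta(L',\bar L')\gtrsim t_\delta^{-2}|L'|^2$ on $S'(d\delta)\cap\{z\in U\}$, and invoking \cite{Ca87} with $t_\delta\lesssim \delta^{1/(2m_1\cdots m_n)}$ delivers the subelliptic estimate of the claimed order. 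The only genuine obstacle is checking that \emph{no} step in Sections \ref{S:local}--\ref{S:instrips} uses the finer structure of $\sum_s |f_s|^2$ beyond its Hessian lower bound; this amounts to bookkeeping, since the approximate system and the covering were already built from the $f_s$ directly in Sections \ref{S:regularcoordinate}--\ref{S:uniform}, and the role of $\sum_s |f_s|^2$ inside $G^p_{\mu,\eta,\delta}$ and $\tilde G^p_\delta$ is used only through $\ddba$, which is monotone.
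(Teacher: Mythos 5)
Your proof is correct and takes essentially the same approach as the paper, which only gives a one-paragraph remark observing that the construction uses $\sum_s|f_s|^2$ solely through its complex Hessian via \eqref{E:desingular}. You have fleshed out that remark accurately: the approximate systems, type invariants, and covering are built from the $f_s$ alone and hence unchanged, and the monotonicity of $\partial\bar\partial$ lets $\rho$ replace $\sum_s|f_s|^2$ inside $G^p_{\mu,\eta,\delta}$ and $\tilde G^p_\delta$ (with $r'$ replacing $r$ and the $L'_i$ redefined relative to $r'$), after which Theorem \ref{T:PluriCn}, Lemma \ref{L:GdeltaHess}, Theorem \ref{T:localpluri}, and the Section \ref{S:instrips} patching go through verbatim.
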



\end{document}